\newcommand{\eps}{\varepsilon}
\newcommand{\sdist}{\mathbf{s}}
\renewcommand{\{}{\left\lbrace}
  \renewcommand{\}}{\right\rbrace}
\let\div\relax
\DeclareMathOperator*{\interior}{int}
\DeclareMathOperator{\div}{div}
\DeclareMathOperator{\id}{id}
\DeclareMathOperator{\Id}{Id}
\DeclareMathOperator*{\dist}{dist}
\DeclareMathOperator*{\diam}{diam}
\DeclareMathOperator{\supp}{supp}
\theoremstyle{plain}
\newtheorem{theorem}{Theorem}
\newtheorem{lemma}{Lemma}
\newtheorem{corollary}{Corollary}
\newtheorem{proposition}{Proposition}
\theoremstyle{definition}
\newtheorem{definition}{Definition}
\newtheorem{step}{Step}
\theoremstyle{remark}
\newtheorem{remark}{Remark}
\title[Uniqueness and stability for surface diffusion]{A uniqueness and stability principle for\\ surface diffusion}
\author{Milan Kroemer and Tim Laux}
\address{Hausdorff Center for Mathematics and Institute for Applied Mathematics, University of Bonn, Villa Maria, Endenicher Allee 62, 53115 Bonn, Germany}
\email{ \nolinkurl{{ milan.kroemer, tim.laux}@hcm.uni-bonn.de} }
\begin{document}

\maketitle

\begin{abstract}
	We derive a uniqueness and stability principle for surface diffusion before the onset of singularities. 
	The perturbations, however, are allowed to undergo topological changes. 
	The main ingredient is a relative energy inequality, which in turn relies on the explicit construction of (volume-preserving) gradient flow calibrations. 
	The proof applies to stationary solutions in any dimension and to general
  smooth solutions in two dimensions.

  \medskip
	
	\noindent \textbf{Keywords:} 
	Surface diffusion; stability; calibrations; gradient flows
	
  \medskip
	
	\noindent \textbf{Mathematical Subject Classification (MSC 2020)}: 
  53E40 (primary); 35B35; 53A15; 53E10 (secondary)
\end{abstract}

\keywords{}

\section{Introduction}
\label{sec:intro}

Surface diffusion is the most fundamental continuum model describing  the evolution of solid material surfaces.
The equation was first introduced by Mullins~\cite{Mullins1957} to describe
the phenomenon of thermal grooving at grain boundaries of heated polycrystals.
Surface diffusion describes the slow change of the shape of a solid body caused by the diffusion of particles along the surface to energetically favorable locations. 
It arises naturally from the conservation law relating the normal velocity $V=V(x,t)$ of the boundary $\Sigma(t)$ of the solid body and the flux of particles $j=j(x,t)$ moving along $\Sigma(t)\subset\mathbb{R}^d$ via
\begin{align*}
  V(\cdot,t)+ \div_{\Sigma(t)}  j(\cdot,t) =0 \quad \text{on } \Sigma(t)
\end{align*}
together with the Nernst--Planck relation stipulating that the average flux of atoms on the material surface follows the negative (mean) curvature gradient,  i.e., 
\begin{align*}
  j(\cdot,t)= -\nabla_{\Sigma(t)} H(\cdot,t) \quad \text{on } \Sigma(t).
\end{align*}
These two equations then precisely yield the surface diffusion equation
\begin{equation}
  \label{eq:surface-diffusion-eq}
  V(\cdot,t)=\Delta_{\Sigma(t)} H(\cdot,t) \quad \text{on } \Sigma(t).
\end{equation}

While surface diffusion is crucial for applications, the mathematical literature is comparably thin.  
This is mainly due to the fragile structure of the equation and the intricate behavior of solutions.
From the viewpoint of partial differential equations, surface diffusion is a quasilinear degenerate parabolic fourth-order equation.  The degeneracy is due to the geometric invariance of the equation just as in its second-order analog, the mean curvature flow. 
The evolution~\eqref{eq:surface-diffusion-eq} has a regularizing effect in the
sense that edges and corners are smoothed out immediately.
As in many geometric evolution equations, despite the immediate smoothing
effect, one has to expect singularity formation in finite time.
The regularizing effect, the fact that surface diffusion can lose convexity and the formation of singularities can be observed in numerical simulations, see the overview article by Garcke~\cite[Fig.\ 3]{Garcke2013}.
Using the parabolic structure, Escher, Mayer and Simonett~\cite{Escher1998} have shown short time existence and uniqueness of classical parametrized solutions for sufficiently regular initial data in arbitrary dimensions.
Further they give numerical examples of embedded curves evolving by surface diffusion that develop singularities in finite time.
Giga and Ito~\cite{Giga1998} have shown that there exist unique local solutions for immersed $H^4$-initial curves.
Chou~\cite{Chou2003} gives a sharp criterion for finite time blowup.
Therefore, it is in principle interesting to consider weak solutions.
However, to the best of our knowledge, no such theory has been developed.

Spheres play a crucial role in surface diffusion since any disjoint union of spheres is a stationary solution. 
They are asymptotically stable in the following sense. 
If the initial datum is sufficiently close to a sphere, then there exists a global solution and it converges to a sphere,	see~\cite{Elliott1997} for the 2-dimensional case and~\cite{Escher1998} for higher dimensions.
Wheeler~\cite{Wheeler2013,Wheeler2022} shows that the evolution of $H^2$-perturbations of a circle converge exponentially fast to a circle in the long-term limit.
A similar result holds in higher dimensions~\cite{Wheeler2012}.
Miura and Okabe~\cite{Miura2020} study the analogous problem for immersed curves.
There are also further interesting stationary solutions of surface diffusion.
For the standard double bubble, Abels, Arab, and Garcke~\cite{Abels2021} showed the stability in the planar case.
Garcke and G\"oswein~\cite{Garcke2021} extended this result to higher dimensions.
For Delaunay surfaces, Kohsaka~\cite{KohY:2017} gives a sufficient condition for stability and in-stability under surface diffusion.
In more generality, Acerbi et al.~\cite{Acerbi2019} show that any three-dimensional configuration which is periodic and strictly stable for the area functional is exponentially stable for the surface diffusion flow.

In contrast to the mean curvature flow, surface diffusion does not satisfy any
comparison principle so that viscosity solution techniques and simple
geometric comparisons fail.
Initially embedded curves can evolve to self intersections, see~\cite{Mayer2000} and~\cite{Blatt2010}.
Even if the initial surface is the graph of a function, it can lose this property under the evolution~\cite{Elliott2001}.
Giga and Ito~\cite{Giga1999} show that there exist simple closed curves which
lose convexity while remaining simple before developing singularities, see
also~\cite{Blatt2010} for some more recent results.

For the present work, the key structural property of surface diffusion is its gradient flow structure: any (sufficiently regular) solution of~\eqref{eq:surface-diffusion-eq} satisfies the energy dissipation relation
\begin{align}\label{eq:intro_ddtArea}
  \frac{d}{dt} \operatorname{Area}(\Sigma(t)) = \int_{\Sigma(t)} V(\cdot,t)H(\cdot,t)\, dS = -\int_{\Sigma(t)} |\nabla_{\Sigma(t)} H(\cdot,t)|^2 \, dS\leq 0.
\end{align}
In other words, the evolution~\eqref{eq:surface-diffusion-eq} follows the steepest descent in an energy landscape;
the energy is the surface area while the metric tensor is the $H^{-1}$ product on normal velocities. 
This structure is the crucial ingredient for the present work and will allow us
to derive a stability and uniqueness result.
Our main results, Theorem~\ref{thm:stability-stationary-sol} and
Theorem~\ref{thm:main-thm}, show that stationary points are stable in any
dimension and that smooth solutions are stable in two dimensions.
The perturbations are allowed to be rather wild, e.g., we can add many small bubbles.

The basic idea is to monitor the evolution of a relative energy modeled after the area functional appearing in~\eqref{eq:intro_ddtArea}. 
This general strategy was introduced recently for multiphase mean curvature flow by Fischer, Hensel, Simon and one of the authors~\cite{FHLS}. 
The key difficulty in that work is then to construct so-called gradient flow calibrations.
This framework was extended to the volume-preserving mean curvature flow in~\cite{Laux2022}, where the notion of volume-preserving calibrations was introduced. 
While in volume-preserving mean curvature flow, the volume is preserved by an artificial Lagrange-multiplier, surface diffusion preserves the volume of the enclosed region~$\Omega(t)$ naturally, as can be seen at the simple computation
\begin{align*}
  \frac{d}{dt} \operatorname{Vol}(\Omega(t)) 
  = \int_{\Sigma(t)} V(\cdot,t) \, dS 
  =\int_{\Sigma(t)} \Delta_{\Sigma(t)} H(\cdot,t) \, dS 
  =0.
\end{align*}
We will construct a variant of the volume-preserving calibrations
from~\cite{Laux2022} in our setting of surface diffusion. 
The main additional challenge in our proof is that the metric structure in the
gradient flow is not the standard $L^2$ scalar product on normal velocities
but the $H^{-1}$ product on the evolving surfaces.
That means that we have to introduce certain potentials on the moving surfaces
in order to capture the precise energy-dissipation mechanism.
The crucial step in our argument is to compare the velocity potentials of two
surfaces, which in particular requires replacing covariant derivatives on one
surface by extensions of the corresponding derivatives on another one.

There has been continuous interest from the applied mathematics community in
surface diffusion because it arises in many physical models.
Cahn, Elliott and Novick-Cohen~\cite{CEN} introduced a Cahn--Hilliard equation
with degenerate mobility and performed formal matched asymptotics indicating
that the sharp-interface limit is governed by surface diffusion.
However, as explained in the review~\cite{GSK} by Gugenberg, Spatschek,
Kassner, different potentials can lead to diffusion in the bulk:
There is a fine interplay between potential and mobility that restricts the
diffusion only to the diffuse interface layer.
R\"atz, Ribalta and Voigt~\cite{Raetz2006} introduced a similar Cahn--Hilliard
equation which allows for standard double-well potentials.
However, this model does not carry a variational structure.
Most recently, another modified Cahn--Hilliard model has been proposed by
Bretin et al.~\cite{Bretin2022} which can be written as the weighted $H^{-1}$
gradient flow of the Cahn--Hilliard energy.
Again, formal matched asymptotics suggest that in the sharp-interface limit
this yields surface diffusion.
Independently, Salvalaglio et al.~\cite{Salvalaglio2021} defined a similar
modified  Cahn--Hilliard model which has a gradient flow structure
\begin{align*}
  \eps \partial_t u=\nabla\cdot(M(u)\nabla\delta\! E_\eps(u)),
\end{align*}
where $E_\eps$ is the modified Cahn--Hilliard energy functional
\begin{align*}
  E_\eps(u)=\int g_0(u)\left( \frac{\eps}{2}|\nabla u|^2+\frac{1}{\eps}W(u) \right)\,dx,
\end{align*}
and both the mobility function $M$ and the inverse
restriction function $1/{g_0}$ vanish at the two stable configurations of the potential $W$.
One of the main interests in studying such phase-field models is that they
give rise to efficient numerical algorithms to approximate surface diffusion.
Indeed, a direct implementation of finite elements for parametrizations, as
done by B\"ansch et al.~\cite{Baensch2005} cannot resolve singularities.
Phase-fields on the other hand give a natural choice of evolution through singularities.
The numerical implementation of phase-fields is commonly done by a
convex-concave splitting in the spirit of Eyre~\cite{Eyre1998} because this
leads to an unconditionally stable scheme.
For the model introduced in~\cite{Raetz2006}, this was performed by Backofen et al.~\cite{Backofen2019}.

In principle, uniqueness and stability proofs are interesting for these
applications because they have the potential to be lifted to approximation
schemes such as phase-field models.
For mean curvature flow and its Allen--Cahn approximation, this was carried
out recently by Fischer, Simon and one of the authors~\cite{FLS}.
We are confident that some of the ideas developed here will be useful for
studying the sharp-interface limit of the Cahn--Hilliard equations developed
in~\cite{Bretin2022,Salvalaglio2021}.
In principle, another possible convergence approach would be via compactness
and weak solutions as was done by the authors in~\cite{KL} for the
Cahn--Hilliard equation with disparate mobilities, i.e., a mobility function
that vanishes in only one of the two stable phases.

\section{Definitions and notation}
\label{sec:definitions-and-notation}

We begin by defining strong and weak solution to surface diffusion.

\begin{definition}[Smooth solution to surface diffusion]\label{def:strong}
  Let $\Sigma^*=(\Sigma^*(t))_{t\in[0,T^*]}$ be a family of closed surfaces
  $\Sigma^*(t)=\partial\Omega^*(t)$ with $\Omega^*(t)\subset\mathbb{R}^d$ open and bounded.
  We say that $\Sigma^*$ is a smooth solution to surface diffusion flow,
  if $\Sigma^*(t)$ is $C^{6,\alpha}$, for some $\alpha\in(0,1)$, and
  the normal velocity $V^*\in C^{2,\alpha}$ and mean curvature $H^*$ satisfy
  \begin{align*}
    V^*(\cdot,t)=\Delta_{\Sigma^*(t)}H^*(\cdot,t)\qquad\text{on $\Sigma^*(t)$}.
  \end{align*}
  We call a surface $\Sigma^*$ a stationary solution if it gives rise to a
  constant-in-time solution with $V^*=0$.
\end{definition}

Here $\Delta_{\Sigma^*(t)}$ denotes the Laplace--Beltrami operator on
$\Sigma^*(t)$.

Before we define weak solutions, we define some quantities on finite perimeter
sets which we will need.
Let $\Omega\subset\mathbb{R}^d$ be a set of finite perimeter and let
$\Sigma\coloneqq\partial^*\Omega$.
Further let $\nu\coloneqq-\frac{\nabla\chi_\Omega}{|\nabla\chi_\Omega|}$ denote
the outward unit normal.

Suppose there exists a function $H:\Sigma\rightarrow\mathbb{R}$ such that for
all vector fields $B\in C_c^1(\mathbb{R}^d;\mathbb{R}^d)$ it holds
\begin{equation}\label{eq:def-mean-curvature-weak}
  \int_\Sigma\div_\Sigma B\,d\mathcal{H}^{d-1}
  =\int_\Sigma H\nu\cdot B\,d\mathcal{H}^{d-1},\qquad
  \div_\Sigma B\coloneqq\nabla\cdot B-\nu\cdot\nabla B\nu,
\end{equation}
that is, $H$ is the distributional mean curvature of $\Sigma$.

We say that a function $\varphi:\Sigma\rightarrow\mathbb{R}$ is weakly
differentiable if there exists a vector field
$\nabla_\Sigma\varphi:\Sigma\rightarrow\mathbb{R}^d$ such that for
all $B\in C_c^1(\mathbb{R}^d;\mathbb{R}^d)$ it holds
\begin{align*}
  \int_\Sigma B\cdot\nabla_\Sigma\varphi\,d\mathcal{H}^{d-1}
  &=\int_\Sigma H\nu\cdot(\varphi B)\,d\mathcal{H}^{d-1}
    -\int_\Sigma\varphi\div_\Sigma B\,d\mathcal{H}^{d-1}.
\end{align*}
Note that if $\varphi\in C^1(\mathbb{R}^d)$ is defined on all of $\mathbb{R}^d$,
then the definition of $\nabla_\Sigma\varphi$ coincides with the standard
tangential gradient
\begin{align*}
  \nabla_\Sigma\varphi=(\Id-\nu\otimes\nu)\nabla\varphi.
\end{align*}
Now we define the Sobolev space $H^1(\Sigma)$ by
\begin{align*}
  H^1(\Sigma)\coloneqq\{\varphi\in L^2(\Sigma):\nabla_\Sigma\varphi\in L^2(\Sigma;\mathbb{R}^d)\}.
\end{align*}

\begin{remark}\label{rem:H1-is-hilbert-space}
  The space $H^1(\Sigma)$ is the closure of
  $C_c^\infty(\mathbb{R}^d)$
  under the norm
  \begin{align*}
    \|u\|_{H^1(\Sigma)}^2\coloneqq\|u\|_{L^2}^2+\|\nabla_\Sigma u\|_{L^2}^2.
  \end{align*}
  Indeed, let $\varphi_\eps$ be a mollifier, let $f\in H^1(\Sigma)$
  and let $\mu=\mathcal{H}^1\MNSlefthalfcup\Sigma$.
  Then the function
  \[f_\eps(x)\coloneqq\int_{\mathbb{R}^2}f(y)\varphi_\eps(x-y)\,d\mu(y)\]
  and its gradient $\nabla_\Sigma f_\eps$ converges to $f$, respectively $\nabla_\Sigma f$,
  uniformly on compact sets and thus in $L^2(\Sigma,\mu)$.
  In particular $H^1(\Sigma)$ is a Hilbert space and
  $\nabla_\Sigma\varphi$ satisfies
  \begin{align*}
    \frac{1}{2}\|\nabla_\Sigma\varphi\|_{L^2}^2
    &=\frac{1}{2}\int_\Sigma|\nabla_\Sigma\varphi|^2\,d\mathcal{H}^{d-1}
    =\sup_{\xi}\{\int_\Sigma\nabla_\Sigma\varphi\cdot\xi\,d\mathcal{H}^{d-1}
    -\frac{1}{2}\int_{\Sigma}|\xi|^2\,d\mathcal{H}^{d-1}\},
  \end{align*}
  where we take the supremum over all test vector fields
  $\xi\in C_c^1(\mathbb{R}^d;\mathbb{R}^d)$.
\end{remark}

We define the space $H^{-1}(\Sigma)$ as follows:
Define an operator $\Delta_{\Sigma}$ on $H^1(\Sigma)$ characterized by
\begin{align*}
  -\int_\Sigma(\Delta_\Sigma u)v\,d\mathcal{H}^{d-1}
  &=\int_\Sigma\nabla_\Sigma u\cdot\nabla_\Sigma v\,d\mathcal{H}^{d-1}
  \quad\text{for all $v\in C_c^\infty(\mathbb{R}^d)$.}
\end{align*}
Then we set $H^{-1}(\Sigma)\coloneqq\{\Delta_\Sigma u:u\in H^1(\Sigma)\}$.

Now we have everything we need to define weak solutions to surface diffusion.

\begin{definition}[Weak solution to surface diffusion]\label{def:weak}
  Let $\{\Omega(t)\}_{t\in[0,T)}$ be a family of sets of finite perimeter and
  let $\Sigma=(\Sigma(t)\coloneqq\partial\Omega(t))_{t\in[0,T)}$.
  We say that $\Sigma$ is a weak solution to surface diffusion
  if there exist functions $V(\cdot,t)\in H^{-1}(\Sigma(t))$ and
  $H(\cdot,t)\in\dot{H}^1(\Sigma(t)),\varphi_V(\cdot,t) \in H^1(\Sigma(t))$ such that 
  \begin{enumerate}[(i)]
  \item The function $H(\cdot,t)$ is the mean curvature of the surface
    $\Sigma(t)$ in the sense that for all
    $B\in C^1(\mathbb{R}^d\times(0,T);\mathbb{R}^d)$
    \begin{equation}
      \label{eq:weak-formulation-kappa-curvature}
      \int_0^T\int_{\Sigma(t)}\div_{\Sigma(t)}B\,d\mathcal{H}^{d-1}\,dt
      =\int_0^T\int_{\Sigma(t)}H\nu\cdot B\,d\mathcal{H}^{d-1}\,dt.
    \end{equation}
  \item The function $V(\cdot,t)$ is the normal velocity of the surface
    $\Sigma(t)$ in the sense that for all
    $\zeta\in C^1(\mathbb{R}^d\times[0,T])$ and almost all $T'\in (0,T)$
    \begin{equation}
      \label{eq:def-V-weak}
      \int_{\Omega(T')}\zeta(x,T')\,dx-\int_{\Omega(0)}\zeta(x,0)\,dx
      =\int_0^{T'}\left( \int_{\Omega(t)}\partial_t\zeta(x,t)\,dx
        +\int_{\Sigma(t)}V\zeta\,d\mathcal{H}^{d-1}
      \right)\,dt.
    \end{equation}
  \item The function $\varphi_V$ is the zero-average potential of $V$ in the
    sense that
    $\int_{\Sigma(t)} \varphi_V\,d\mathcal{H}^{d-1} =0$ and for all
    $g\in C_c^\infty(\mathbb{R}^d)$
    \begin{equation}
      \label{eq:phi-V-weak-formulation}
      -\int_0^T\int_{\Sigma(t)}\nabla_{\Sigma(t)} g\cdot\nabla_{\Sigma(t)}\varphi_V\,d\mathcal{H}^{d-1}\,dt
      =\int_0^T\int_{\Sigma(t)}gV\,d\mathcal{H}^{d-1}\,dt.
    \end{equation}
  \item For almost every $T' \in (0,T)$, the following sharp energy dissipation relation is satisfied
    \begin{equation}
      \label{eq:area-decay-assumption}
      \begin{split}
        &\mathcal{H}^{d-1}(\Sigma(T'))-\mathcal{H}^{d-1}(\Sigma(0)) \\
        \leq\,\,&
                  -\frac{1}{2}\int_0^{T'}\int_{\Sigma(t)}|\nabla_{\Sigma(t)}H|^2\,d\mathcal{H}^{d-1}\,dt
                  -\frac{1}{2}\int_0^{T'}\int_{\Sigma(t)}|\nabla_{\Sigma(t)}\varphi_V|^2\,d\mathcal{H}^{d-1}\,dt.
      \end{split}
    \end{equation}
  \item There exists a constant $C<\infty$ such that for a.e.\ $x,t$ we have
    \begin{equation}\label{eq:bound-balls}
      \sup_{r>0}\frac{1}{2r}\mathcal{H}(B^{\Sigma(t)}(x,r))<C.
    \end{equation}
  \end{enumerate}
\end{definition}

\begin{remark}
  Note that, since $H$ is differentiable almost everywhere on $\Sigma$,
  a.e.\ $x\in\Sigma$ is a Lebesgue point of $H$.
  Hence, by the Poincar\'{e} inequality~\eqref{eq:poincare-ineq}, we have $H\in H^1(\Sigma)$.
\end{remark}

We are going to prove a weak-strong uniqueness result.
Before we do this, we introduce some notation.
Suppose $\Sigma^*(t)=\bigcup_{i=1}^{k^*(t)}\Sigma^*_i(t)=\bigcup_{i=1}^{k^*(t)}\partial^*\Omega_i^*(t)$
is a smooth solution to surface diffusion flow with $k^*(t)$ path components.
Since all $\Sigma^*(t)$ are $C^2$ hypersurfaces, no topological changes can occur, so $k^*(t)=k^*$ is constant.

Let $\zeta$ be a cutoff function such that $\zeta(\tilde s)=1-\tilde s^2$ for
$|\tilde s|\leq\delta/2$ and $\zeta(\tilde s)=0$ for $|\tilde s|\geq\delta$.
We need a smallness assumption on $\delta$, which we will state below.

First, we need the following definition.
Let $\Omega^*_i(t)$ be the region enclosed by $\Sigma^*_i(t)$ and define the
signed distance of $\Omega^*(t)=\bigcup_{i=1}^{k^*}\Omega^*_i(t)$ by
\begin{align*}
  \sdist(x,t)&\coloneqq\dist(x,\Omega^*(t))-\dist(x,(\Omega^*(t))^c).
\end{align*}
Then $\sdist(\cdot,t)$ is a smooth function in a neighborhood of $\Sigma^*$.
Therefore, for each $i$ and each $x\in\Sigma^*_i(t)$, there exists $\eps_x>0$ sufficiently
small such that $\nabla\sdist(x)\neq 0$ in $B_{\eps_x}(x)$.
Then the orthogonal projection onto $\Sigma^*_i(t)$ is well defined in the
neighborhood $B_{\eps_x}(x)$ and given by
$\id-\sdist(\cdot,t)\nabla\sdist(\cdot,t)$.
Define
\begin{align*}
  r(\Sigma^*_i(t))\coloneqq\min_{x\in\Sigma^*_i(t)}\eps_x.
\end{align*}
Then $r(\Sigma^*_i(t))>0$ because $\Sigma^*_i(t)$ is compact.
Now our assumption on $\delta$ is
\begin{eqnarray}
  \label{eq:assumption-delta}
  \delta<\min_{0\leq t\leq T^*}\min_{1\leq i\leq k^*}\{
  \min_{j\neq i}\{\dist(\Sigma^*_i(t),\Sigma^*_j(t))/4\},
  r(\Sigma^*_i(t))/4
  \},
\end{eqnarray}
and, for each $t\in[0,T^*]$ we define a tubular neighborhood of $\Sigma^*(t)$ by
\begin{align*}
  \mathcal{U}_\delta(t)\coloneqq\{x\in\mathbb{R}^d:\dist(x,\Sigma^*(t))<\delta\}.
\end{align*}
Define $\xi:\mathbb{R}^d\times[0,T^*]\rightarrow\mathbb{R}^d$ by
\begin{equation}
  \label{eq:def-xi}
  \xi(x,t)\coloneqq\zeta(\sdist(x,t))\nabla\sdist(x,t),
\end{equation}
We define the orthogonal projection
$\pi^*(\cdot,t):\mathcal{U}_\delta(t)\rightarrow\Sigma^*(t)$ onto $\Sigma^*(t)$:
\begin{equation}
  \label{eq:def-orthogonal-proj-sigma-star}
  \pi^*(\cdot,t)
  \coloneqq\id-\sdist(\cdot,t)\nabla\sdist(\cdot,t).
\end{equation}
The assumption~\eqref{eq:assumption-delta} on $\delta$ guarantees that $\pi^*(\cdot,t)$
is well defined for $t\in[0,T^*]$.
Further we let $\theta$ be a smooth truncation of the identity.
Precisely, $\theta(-s)=-\theta(s)$ for all $s$, $\theta(s)=s$ for
$0\leq s\leq\delta/2$ and $\theta(s)=\delta$ for $s>\delta$.
Now we let
\begin{equation}
  \label{eq:def-vartheta}
  \vartheta\coloneqq\theta\circ\sdist
\end{equation}
and define the relative energy
\begin{equation}
  \label{eq:def-entropy-functional}
  \mathcal{E}(t)\coloneqq\int_{\Sigma(t)}(1-\nu(\cdot,t)\cdot\xi(\cdot,t))\,d\mathcal{H}^{d-1},
\end{equation}
and the volume error
\begin{equation}
  \label{eq:def-volume-error}
  \mathcal{F}(t)
  \coloneqq\int_{\mathbb{R}^2}(\chi_{\Omega(t)}-\chi_{\Omega^*(t)})\vartheta(\cdot,t)\,dx.
\end{equation}
By construction we have
\begin{align*}
  \mathcal{F}(t)
  &=\int_{\mathbb{R}^2}|\chi_{\Omega(t)}-\chi_{\Omega^*(t)}||\vartheta(\cdot,t)|\,dx.
\end{align*}

To prove weak-strong uniqueness, we will show an estimate of the form
\begin{align*}
  \mathcal{E}(T')+\mathcal{F}(T')\leq Ce^{CT'}(\mathcal{E}(0)+\mathcal{F}(0)).
\end{align*}
By Gronwall's inequality it suffices to show
\begin{equation}
  \label{eq:suffices-gronwall}
  \mathcal{E}(T')+\mathcal{F}(T')
  -\mathcal{E}(0)-\mathcal{F}(0)
  \leq C\int_0^{T'}\mathcal{E}(t)+\mathcal{F}(t)\,dt
  \qquad\text{for a.e.\ $T'\in[0,\min\{T,T^*\}]$,}
\end{equation}
which formally means
\begin{equation}
  \label{eq:gronwall-formal}
  \frac{d}{dt}(\mathcal{E}(t)+\mathcal{F}(t))\leq C(\mathcal{E}(t)+\mathcal{F}(t))
  \quad\text{for a.e.\ $t\in[0,\min\{T,T^*\}]$.}
\end{equation}

First we perform some formal computations in the spirit of the differential
inequality~\eqref{eq:gronwall-formal} which we will turn into a rigorous
integral inequality in form of~\eqref{eq:suffices-gronwall}.
We have
\begin{equation}
  \label{eq:d-dt-E}
  \frac{d}{dt}\mathcal{E}(t)
  =\frac{d}{dt}\mathcal{H}^{d-1}(\Sigma(t))
  -\frac{d}{dt}\int_{\Sigma(t)}\nu\cdot\xi\,d\mathcal{H}^{d-1}.
\end{equation}
For the first term, we use the formal differential inequality corresponding to~\eqref{eq:area-decay-assumption}:
\begin{align*}
  \frac{d}{dt}\mathcal{H}^{d-1}(\Sigma(t))
  &\leq-\frac{1}{2}\int_{\Sigma(t)}|\nabla_{\Sigma(t)}H|^2\,d\mathcal{H}^{d-1}
  -\frac{1}{2}\int_{\Sigma(t)}|\nabla_{\Sigma(t)}\varphi_V|^2\,d\mathcal{H}^{d-1}.
\end{align*}
We compute, using the formal differential identity corresponding to~\eqref{eq:def-V-weak},
\begin{align*}
  -\frac{d}{dt}\int_{\Sigma(t)}\nu\cdot\xi\,d\mathcal{H}^{d-1}
  &=-\frac{d}{dt}\int_{\Omega(t)}\nabla\cdot\xi\,dx \\
  &=-\int_{\Omega(t)}\nabla\cdot(\partial_t\xi)\,dx
    -\int_{\Sigma(t)}V\nabla\cdot\xi\,d\mathcal{H}^{d-1} \\
  &=-\int_{\Sigma(t)}\nu\cdot(\partial_t\xi)+(\nabla\cdot\xi)V\,d\mathcal{H}^{d-1}.
\end{align*}
Now we complete a square and combine the above
\begin{equation}
  \label{eq:first-estimate-dt-E}
  \begin{split}
    \frac{d}{dt}\mathcal{E}(t)
    \leq&\,\,-\frac{1}{2}\int|\nabla_{\Sigma(t)}H|^2\,d\mathcal{H}^{d-1}
          -\frac{1}{2}\int_{\Sigma(t)}|\nabla_{\Sigma(t)}\varphi_V|^2\,d\mathcal{H}^{d-1} \\
        &\,\,-\int_{\Sigma(t)}\nu\cdot(\partial_t\xi)\,d\mathcal{H}^{d-1}
          -\int_{\Sigma(t)}(\nabla_{\Sigma(t)}(\nabla\cdot\xi))\cdot\nabla_{\Sigma(t)}\varphi_V\,d\mathcal{H}^{d-1} \\
    =&\,\,-\frac{1}{2}\int_{\Sigma(t)}|\nabla_{\Sigma(t)}H|^2\,d\mathcal{H}^{d-1}
       -\frac{1}{2}\int_{\Sigma(t)}|\nabla_{\Sigma(t)}\varphi_V-\nabla_{\Sigma(t)}(\nabla\cdot\xi)|^2\,d\mathcal{H}^{d-1} \\
        &\,\,-\int_{\Sigma(t)}\nu\cdot(\partial_t\xi)\,d\mathcal{H}^{d-1}
          +\frac{1}{2}\int_{\Sigma(t)}|\nabla_{\Sigma(t)}(\nabla\cdot\xi)|^2\,d\mathcal{H}^{d-1}.
  \end{split}
\end{equation}

The main goal of the present work is to estimate the terms on the right hand
side by $C(\mathcal{E}(t)+\mathcal{F}(t))$.
To this end, we extend the differential operator $\nabla_{\Sigma^*(t)}$ to
$\mathbb{R}^d$ and give some estimates between the geometric quantities on
$\Sigma(t)$ and $\Sigma^*(t)$.
\begin{definition}\label{def:extension-quantities-Sigma-star}
  Let $\eta:\mathbb{R}\rightarrow[0,\infty)$ be a cutoff function such that
  $\eta=1$ on $[-\delta,\delta]\supseteq\supp\zeta$ and
  $\eta(s)=0$ for $|s|\geq 2\delta$.
  We extend $H^*,\,\nu^*$ and the differential operator $\nabla_{\Sigma^*(t)}$
  to all of $\mathbb{R}^d$ by
  \begin{align*}
    H^*\coloneqq\eta\Delta\sdist,\quad
    \nu^*\coloneqq\eta\nabla\sdist,
    \quad\nabla_{\Sigma^*(t)}\coloneqq(\Id-\nu^*\otimes\nu^*)\nabla.
  \end{align*}
\end{definition}
By this definition, we have smooth functions $\nu^*,H^*$ defined on
$\mathbb{R}^d\times[0,T^*]$ such that the restriction to $\Sigma^*$ is the
normal and curvature, respectively.
Further, the operator $\nabla_{\Sigma^*(t)}$ is defined on $\mathbb{R}^d$ and
coincides with the surface gradient on $\Sigma^*(t)$.
We suppress the dependence on $t$ in the following lemma.

\begin{lemma}\label{lem:various-estimates}
  For any weakly differentiable function $u$ on $\mathbb{R}^d$ it holds
  \begin{equation}
    \label{eq:estimate-operators}
    \zeta|\nabla_{\Sigma} u-\nabla_{\Sigma^*}u|^2
    \leq 2|\nabla u|^2\zeta|\nu-\nu^*|^2
    \quad\text{a.e.\ on $\Sigma$.}
  \end{equation}
  Furthermore
  \begin{equation}
    \label{eq:estimate-nu-minus-nu-star-squared}
    \zeta|\nu-\nu^*|^2
    \leq 2(1-\nu\cdot\xi)\quad\text{on $\Sigma$,}
  \end{equation}
  and
  \begin{equation}
    \label{eq:estimate-nabla-Sigma-sdist}
    |\nabla_{\Sigma}\sdist|^2
    \leq 2(1-\nu\cdot\xi)\quad\text{on $\Sigma$.}
  \end{equation}
\end{lemma}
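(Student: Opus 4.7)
The plan is to treat the three bounds separately, but in each case to exploit the same structural observation: on the support of $\zeta$ the signed distance $\sdist$ is smooth with $|\nabla\sdist|=1$, and since $\supp\zeta\subseteq[-\delta,\delta]\subseteq\{\eta=1\}$ we have $\nu^*=\nabla\sdist$ (and in particular $|\nu^*|=1$) there; outside $\supp\zeta$, the left-hand sides either vanish or are trivially controlled by the $1$-Lipschitz bound $|\nabla\sdist|\leq 1$.

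For \eqref{eq:estimate-operators}, I would start from the identity $\nabla_\Sigma u-\nabla_{\Sigma^*}u=(\nu^*\otimes\nu^*-\nu\otimes\nu)\nabla u$, which is immediate from the definitions. Expanding the squared norm with the shorthand $a=\nu^*\cdot\nabla u$, $b=\nu\cdot\nabla u$, $c=\nu\cdot\nu^*$ and using $|\nu|=|\nu^*|=1$ on $\Sigma\cap\supp\zeta$ gives
\begin{equation*}
|\nabla_\Sigma u-\nabla_{\Sigma^*}u|^2=a^2+b^2-2cab=(1-c^2)\,a^2+(b-ca)^2.
\end{equation*}
Since $|\nu-c\nu^*|^2=1-c^2$, Cauchy--Schwarz bounds $(b-ca)^2\leq(1-c^2)|\nabla u|^2$ and $a^2\leq|\nabla u|^2$, so the two terms together contribute at most $2(1-c^2)|\nabla u|^2$. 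The factor of $2$ in \eqref{eq:estimate-operators} is then delivered by $1-c^2=(1-c)(1+c)\leq 2(1-c)=|\nu-\nu^*|^2$.

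For \eqref{eq:estimate-nu-minus-nu-star-squared}, the key observation is $\nu\cdot\xi=\zeta\,\nu\cdot\nu^*$ on $\supp\zeta$, which turns the left-hand side into $\zeta|\nu-\nu^*|^2=2\zeta(1-\nu\cdot\nu^*)=2(1-\nu\cdot\xi)-2(1-\zeta)$; since $\zeta\leq 1$, the correction term is nonnegative and the inequality follows (off $\supp\zeta$ the left-hand side vanishes). For \eqref{eq:estimate-nabla-Sigma-sdist} I would decompose $\nabla_\Sigma\sdist=\nabla\sdist-(\nu\cdot\nabla\sdist)\nu$ and, on $\supp\zeta$ where $|\nabla\sdist|=1$, obtain $|\nabla_\Sigma\sdist|^2=1-c^2$. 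Because $2\zeta c\leq\zeta^2+c^2\leq 1+c^2$ by AM--GM together with $\zeta\leq 1$, this is bounded by $2-2\zeta c=2(1-\nu\cdot\xi)$. Off $\supp\zeta$ the Lipschitz bound $|\nabla\sdist|\leq 1$ gives $|\nabla_\Sigma\sdist|^2\leq 1\leq 2=2(1-\nu\cdot\xi)$.

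I do not expect a serious obstacle here: each estimate reduces to an elementary algebraic identity together with the definition of the signed distance. The only piece of bookkeeping worth being careful about is the nesting of supports encoded in the smallness assumption \eqref{eq:assumption-delta} on $\delta$, which is what allows us to replace $\nu^*$ by $\nabla\sdist$ and set $|\nu^*|=1$ wherever the factor $\zeta$ is nonzero; this lets the computations above be carried out pointwise rather than having to carry the truncation $\eta$ through the arithmetic.
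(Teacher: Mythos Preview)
Your proof is correct and follows essentially the same pointwise-algebraic approach as the paper. For \eqref{eq:estimate-nu-minus-nu-star-squared} and \eqref{eq:estimate-nabla-Sigma-sdist} the arguments are virtually identical to the paper's (the paper handles the latter in one line via $|\nabla_\Sigma\sdist|^2=|\nabla\sdist|^2-(\nu\cdot\nabla\sdist)^2\leq 1-(\nu\cdot\xi)^2\leq 2(1-\nu\cdot\xi)$, which is your AM--GM step in disguise). For \eqref{eq:estimate-operators} your decomposition $a^2+b^2-2cab=(1-c^2)a^2+(b-ca)^2$ is a slightly different route than the paper's add-and-subtract splitting $b\nu-a\nu^*=b(\nu-\nu^*)+(b-a)\nu^*$; your version has the advantage that it delivers the constant $2$ cleanly, whereas the paper's intermediate inequality as literally written would naively lose a factor of $2$.
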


\begin{proof}
  We compute
  \begin{align*}
    |(\nu\cdot\nabla u)\nu-(\nu^*\cdot\nabla u)\nu^*|^2
    &\leq|(\nu\cdot\nabla u)(\nu-\nu^*)|^2
      +|((\nu-\nu^*)\cdot\nabla u)|^2
      \leq 2|\nabla u|^2|\nu-\nu^*|^2.
  \end{align*}
  Furthermore
  \begin{align*}
    \zeta|\nu-\nu^*|^2
    &=2\zeta(1-\nu\cdot\nu^*)
      \leq 2(1-\nu\cdot\xi),
  \end{align*}
  and
  \begin{align*}
    |\nabla_\Sigma\sdist|^2
    &=|\nabla\sdist|^2-(\nu\cdot\nabla\sdist)^2
      \leq|\nabla\sdist|^2-(\zeta\circ\sdist)^2(\nu\cdot\nabla\sdist)^2
      \leq 2(1-\nu\cdot\xi).
  \end{align*}
  Finally
  \begin{align*}
    &\int_{\Sigma}(\zeta u-v)^2\,d\mathcal{H}^{d-1} \\
    &\leq
    (\mathcal{H}^{d-1}(\Sigma))^2\int_{\Sigma}(\nabla_{\Sigma}(\zeta u)-\nabla_{\Sigma}v)^2\,d\mathcal{H}^{d-1}
      +\int_{\Sigma} c^2\,d\mathcal{H}^{d-1} \\
    &\lesssim(\mathcal{H}^{d-1}(\Sigma))^2
      \left( \int_{\Sigma}(\zeta\nabla_{\Sigma}u-\nabla_{\Sigma}v)^2\,d\mathcal{H}^{d-1}
                  +\int_{\Sigma}(\zeta'u)^2(\nabla_{\Sigma}\sdist)^2\,d\mathcal{H}^{d-1}
      \right) +\int_{\Sigma} c^2\,d\mathcal{H}^{d-1} \\
    &\leq(\mathcal{H}^{d-1}(\Sigma))^2\int_{\Sigma}(\zeta\nabla_{\Sigma}u-\nabla_{\Sigma}v)^2\mathcal{H}^{d-1}
      +\int_{\Sigma} c^2\,d\mathcal{H}^{d-1} +(\mathcal{H}^{d-1}(\Sigma))^2\|\zeta'u\|_\infty^2\mathcal{E}. \qedhere
  \end{align*}
\end{proof}

\section{Stability for stationary solutions}
\label{sec:stability-for-stationary-sol}

We are now in the position to prove our first main result, which concerns the
stability of stationary solutions.

\begin{theorem}\label{thm:stability-stationary-sol}
  Let $\Sigma^*$ be a smooth stationary solution to surface diffusion in the sense of
  Definition~\ref{def:strong}.
  Then
  \begin{equation}
    \label{eq:thm-stationary-gronwall-estimate}
    \mathcal{E}(T')+\mathcal{F}(T')\leq Ce^{CT'}(\mathcal{E}(0)+\mathcal{F}(0))
    \quad\text{for a.e.\ $T'\in[0,\min\{T,T^*\}]$}
  \end{equation}
  for any weak solution $\Sigma$ to surface diffusion according to
  Definition~\ref{def:weak}.
  In particular, if $\Omega(0)=\Omega^*(0)$ up to Lebesgue null sets, then
  $\Omega(t)=\Omega^*(t)$ up to Lebesgue null sets for a.e.\ $t\in[0,\min\{T,T^*\}]$.
\end{theorem}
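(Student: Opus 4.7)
Since $\Sigma^*$ is stationary, the calibration $\xi$ and the auxiliary function $\vartheta$ are time-independent, so the $\partial_t\xi$ contribution in the formal identity leading to \eqref{eq:first-estimate-dt-E} vanishes. The plan is to upgrade that formal derivation to a rigorous integral inequality via the weak formulations of Definition~\ref{def:weak}, combine it with the analogous evolution of $\mathcal{F}(t)$, and bound the remaining right-hand side by $\mathcal{E}(t)$ using Lemma~\ref{lem:various-estimates} together with the special geometric structure of stationary solutions.

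For the rigorous integration step I would first rewrite $\int_{\Sigma(t)}\nu\cdot\xi\,d\mathcal{H}^{d-1}=\int_{\Omega(t)}\nabla\cdot\xi\,dx$ and apply the weak velocity identity \eqref{eq:def-V-weak} to the two time-independent test functions $g=\nabla\cdot\xi$ and $g=\vartheta$ (both are smooth on $\mathbb{R}^d$ thanks to \eqref{eq:assumption-delta}). The potential identity \eqref{eq:phi-V-weak-formulation} then converts each $\int_{\Sigma(t)} V g$ into $-\int_{\Sigma(t)}\nabla_{\Sigma(t)} g\cdot\nabla_{\Sigma(t)}\varphi_V$. Combining with the sharp dissipation relation \eqref{eq:area-decay-assumption} and applying Young's inequality on the cross term to absorb one half of $\int|\nabla_{\Sigma(t)}\varphi_V|^2$, I expect to obtain
\[
\mathcal{E}(T')+\mathcal{F}(T')-\mathcal{E}(0)-\mathcal{F}(0)
\leq -\tfrac12\!\int_0^{T'}\!\!\int_{\Sigma(t)}|\nabla_{\Sigma(t)} H|^2\,d\mathcal{H}^{d-1}\,dt
+\tfrac12\!\int_0^{T'}\!\!\int_{\Sigma(t)}\bigl|\nabla_{\Sigma(t)}(\nabla\!\cdot\!\xi)-\nabla_{\Sigma(t)}\vartheta\bigr|^2\,d\mathcal{H}^{d-1}\,dt.
\]

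The crux is then the pointwise bound $|\nabla_{\Sigma(t)}(\nabla\cdot\xi)-\nabla_{\Sigma(t)}\vartheta|^2\leq C(1-\nu\cdot\xi)$ on $\Sigma(t)$. For $\vartheta$, the chain rule gives $\nabla\vartheta=\theta'(\sdist)\nabla\sdist$, which is parallel to $\nu^*$ throughout the tubular neighborhood, so $\nabla_{\Sigma^*}\vartheta\equiv 0$ and \eqref{eq:estimate-operators} gives $\zeta|\nabla_\Sigma\vartheta|^2\leq C(1-\nu\cdot\xi)$. For $\nabla\cdot\xi$ I would exploit the stationarity: $\Delta_{\Sigma^*}H^*=0$ on each closed component forces $H^*$ to be component-wise constant, so by Alexandrov's theorem every component of $\Sigma^*$ is a sphere and all principal curvatures of $\Sigma^*$ are constant. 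Therefore $\Delta\sdist$ depends only on $\sdist$ in the tubular neighborhood, which makes $\nabla\cdot\xi=\zeta'(\sdist)|\nabla\sdist|^2+\zeta(\sdist)\Delta\sdist$ itself a function of $\sdist$ alone, and hence $\nabla(\nabla\cdot\xi)$ is parallel to $\nu^*$, i.e., $\nabla_{\Sigma^*}(\nabla\cdot\xi)\equiv 0$. A second application of \eqref{eq:estimate-operators} then yields $\zeta|\nabla_\Sigma(\nabla\cdot\xi)|^2\leq C(1-\nu\cdot\xi)$, and the triangle inequality finishes the pointwise bound on $\supp\zeta$.

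The main remaining technical point, which I expect to be the only real obstacle, is the removal of the $\zeta$-weight. Outside $\supp\zeta$, both $\xi$ and $\nabla\vartheta$ vanish, so the integrand is identically zero; on the transition region $\delta/2\leq|\sdist|\leq\delta$ one uses $1-\nu\cdot\xi\geq 1-\zeta(\delta/2)=\delta^2/4>0$ together with the a priori boundedness of all derivatives of $\xi$ and $\vartheta$ to remove the $\zeta$ factor up to a multiplicative constant. Integration over $\Sigma(t)$ then gives the desired $\leq C\mathcal{E}(t)$ bound, which together with the estimate displayed above produces \eqref{eq:suffices-gronwall} (after dropping the nonpositive dissipation term and using $\mathcal{F}\geq 0$ on the right). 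Gronwall's lemma delivers \eqref{eq:thm-stationary-gronwall-estimate}. Finally, the uniqueness assertion follows because $|\vartheta|>0$ Lebesgue almost everywhere on $\mathbb{R}^d$, so $\mathcal{F}(T')=0$ forces $\chi_{\Omega(T')}=\chi_{\Omega^*(T')}$ almost everywhere.
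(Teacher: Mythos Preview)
Your argument is correct and reaches the same conclusion, but it diverges from the paper's proof in two respects worth noting.

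First, an algebraic simplification: you add the evolution of $\mathcal{E}$ and $\mathcal{F}$ before applying Young's inequality, so that a single application against $\tfrac12\int|\nabla_{\Sigma(t)}\varphi_V|^2$ leaves only the residual $\tfrac12\int|\nabla_{\Sigma(t)}(\nabla\cdot\xi)-\nabla_{\Sigma(t)}\vartheta|^2$. The paper instead completes the square for $\mathcal{E}$ alone (producing the dissipation term $-\tfrac12\int|\nabla_{\Sigma(t)}\varphi_V-\nabla_{\Sigma(t)}(\nabla\cdot\xi)|^2$), then treats $\mathcal{F}$ separately with a Young parameter $\eps<\tfrac12$ so that the resulting $\eps\int|\nabla_{\Sigma(t)}\varphi_V-\nabla_{\Sigma(t)}(\nabla\cdot\xi)|^2$ is absorbed back into that dissipation. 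Your version is tidier here; the paper's keeps the dissipation term visible because the same scheme is reused verbatim in the non-stationary $2$D proof, where that negative term is genuinely needed.

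Second, and more substantively, you invoke Alexandrov's theorem: from $\Delta_{\Sigma^*}H^*=0$ on a closed surface you get $H^*$ locally constant, hence each component of $\Sigma^*$ is a round sphere, so $\Delta\sdist$ and thus $\nabla\cdot\xi$ are functions of $\sdist$ alone and $\nabla_{\Sigma^*}(\nabla\cdot\xi)\equiv 0$ throughout the tubular neighborhood. The paper (Lemma~\ref{lem:estimate-grad-sigma-div-xi}) avoids this classification entirely: it only uses that $\nabla_{\Sigma^*}H^*=0$ \emph{on} $\Sigma^*$, and then the $C^2$ regularity of the extended $H^*$ gives $|\nabla_{\Sigma^*}H^*|=O(\sdist)$ in $\mathcal{U}_\delta$, which is enough after squaring. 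Your route is geometrically transparent but imports a deep rigidity theorem; the paper's is more elementary and closer in spirit to the perturbative estimates needed in the general (non-stationary) case of Section~4.
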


Before we prove the theorem, we prove the following lemma.

\begin{lemma}\label{lem:estimate-grad-sigma-div-xi}
  Let $\Sigma^*$ be a stationary solution to surface diffusion according to
  Definition~\ref{def:strong} and let $\Sigma$ be a weak solution to surface
  diffusion according to Definition~\ref{def:weak}.
  Then there exists a constant $C<\infty$ depending only on $\Sigma^*$ such that
  \begin{equation}
    \label{eq:estimate-grad-sigma-div-xi}
    \int_{\Sigma(t)}|\nabla_{\Sigma(t)}(\nabla\cdot\xi)|^2\,d\mathcal{H}^{d-1}
    \leq C\mathcal{E}(t)\quad\text{for a.e.\ $t\in[0,\min\{T,T^*\}]$.}
  \end{equation}
\end{lemma}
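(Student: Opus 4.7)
The plan is to split
\[\nabla_{\Sigma(t)}(\nabla\cdot\xi) = \nabla_{\Sigma^*(t)}(\nabla\cdot\xi) + \bigl(\nabla_{\Sigma(t)}-\nabla_{\Sigma^*(t)}\bigr)(\nabla\cdot\xi)\]
and prove the pointwise bound $|\nabla_{\Sigma(t)}(\nabla\cdot\xi)|^2 \leq C(1-\nu\cdot\xi)$ on $\Sigma(t)$, from which \eqref{eq:estimate-grad-sigma-div-xi} follows by integration against $\mathcal{H}^{d-1}$. The decisive input is stationarity of $\Sigma^*$: on each closed component $\Sigma^*_i(t)$, $\Delta_{\Sigma^*_i(t)}H^*=0$ forces $H^*$ to be locally constant, hence $\nabla_{\Sigma^*}H^*$ vanishes on $\Sigma^*$. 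The relative energy then sees the resulting first-order smallness through $1-\zeta\approx\sdist^2$.

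For the principal term, in the tubular neighborhood $\mathcal{U}_\delta$ we have $|\nabla\sdist|=1$ and $\eta(\sdist)=1$, so
\[\nabla\cdot\xi = \zeta'(\sdist)+\zeta(\sdist)\Delta\sdist = \zeta'(\sdist)+\zeta(\sdist)H^*,\]
while $\nabla\cdot\xi\equiv 0$ outside $\mathcal{U}_\delta$. Since $\nu^*=\nabla\sdist$ on $\mathcal{U}_\delta$, the two terms of $\nabla(\nabla\cdot\xi)$ proportional to $\nabla\sdist$ are killed by $\Id-\nu^*\otimes\nu^*$, leaving
\[\nabla_{\Sigma^*}(\nabla\cdot\xi) = \zeta(\sdist)\,\nabla_{\Sigma^*}H^*.\]
Stationarity gives $\nabla_{\Sigma^*}H^*=0$ on $\Sigma^*$, and a first-order Taylor expansion around the nearest point $\pi^*(x)\in\Sigma^*$ together with the $C^{6,\alpha}$-regularity of $\Sigma^*$ yields $|\nabla_{\Sigma^*}H^*(x)|\leq C|\sdist(x)|$ in $\mathcal{U}_\delta$. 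Hence $|\nabla_{\Sigma^*}(\nabla\cdot\xi)|^2\leq C\zeta(\sdist)^2\sdist^2$, which a quick case analysis turns into $C(1-\nu\cdot\xi)$: on $|\sdist|\leq\delta/2$ we use $1-\nu\cdot\xi\geq 1-\zeta=\sdist^2$ and $\zeta\leq 1$; on $|\sdist|\geq\delta/2$ the construction of $\zeta$ gives a uniform lower bound $1-\nu\cdot\xi\geq 1-\zeta\geq c_\delta>0$ while $\zeta^2\sdist^2$ stays bounded (and vanishes for $|\sdist|\geq\delta$).

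For the error piece, the elementary inequality
\[|(\nabla_{\Sigma}-\nabla_{\Sigma^*})u|^2\leq 4|\nabla u|^2|\nu-\nu^*|^2\]
combined with the fact that $|\nabla(\nabla\cdot\xi)|$ is bounded by a constant depending only on $\Sigma^*$ and $\zeta$ (directly from the explicit formula above) reduces the task to estimating $|\nu-\nu^*|^2$ by $1-\nu\cdot\xi$. The same split at $|\sdist|=\delta/2$ works: on $|\sdist|\leq\delta/2$, $\zeta\geq 1-\delta^2/4$ is bounded below, and \eqref{eq:estimate-nu-minus-nu-star-squared} gives $|\nu-\nu^*|^2\leq C(1-\nu\cdot\xi)$; on $|\sdist|\geq\delta/2$, $|\nu-\nu^*|^2\leq 4$ while $1-\nu\cdot\xi\geq c_\delta$. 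Combining the two contributions yields the pointwise bound, and hence the lemma after integration. The main obstacle is conceptual rather than technical: despite $\xi$ itself not being small, stationarity is exactly what forces the tangential gradient of $\nabla\cdot\xi$ to vanish to first order at $\Sigma^*$, on precisely the scale detected by $\mathcal{E}$.
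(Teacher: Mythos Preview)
Your proof is correct and follows essentially the same approach as the paper: both exploit that stationarity forces $\nabla_{\Sigma^*}H^*=0$ on $\Sigma^*$, hence $|\nabla_{\Sigma^*}H^*|=O(\sdist)$ in $\mathcal{U}_\delta$, and both control the mismatch between $\nabla_{\Sigma(t)}$ and $\nabla_{\Sigma^*}$ via $|\nu-\nu^*|^2\lesssim 1-\nu\cdot\xi$. Your organization is slightly cleaner in that you project with $\nabla_{\Sigma^*}$ first (so the $\nabla\sdist$-direction terms are killed automatically), whereas the paper expands $\nabla_{\Sigma(t)}(\nabla\cdot\xi)$ by the chain rule and estimates the resulting $\nabla_{\Sigma(t)}\sdist$ terms separately via \eqref{eq:estimate-nabla-Sigma-sdist}; but this is a cosmetic difference, not a different argument.
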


\begin{proof}
  We compute
  \begin{align*}
    \nabla_{\Sigma(t)}(\nabla\cdot\xi)
    &=(\zeta\circ\sdist)\nabla_{\Sigma(t)}(\Delta\sdist)
      +(\zeta'\circ\sdist)(\Delta\sdist)\nabla_{\Sigma(t)}\sdist
      +(\zeta''\circ\sdist)\nabla_{\Sigma(t)}\sdist.
  \end{align*}
  We note that $\Delta\sdist=H^*$ on $\supp\zeta\circ\sdist$.
  By~\eqref{eq:estimate-operators} and~\eqref{eq:estimate-nabla-Sigma-sdist}
  we have $|\nabla_{\Sigma(t)}\sdist|^2\leq 2(1-\nu\cdot\xi)$ and
  $|\nabla_{\Sigma(t)}H^*-\nabla_{\Sigma^*(t)}H^*|^2\lesssim 1-\nu\cdot\xi$.
  Hence,
  \begin{align*}
    \int_{\Sigma(t)}|\nabla_{\Sigma(t)}(\nabla\cdot\xi)|^2\,d\mathcal{H}^{d-1}
    &\lesssim\mathcal{E}(t)+\int_{\Sigma(t)}(\zeta\circ\sdist)^2|\nabla_{\Sigma^*(t)}H^*|^2\,d\mathcal{H}^{d-1}.
  \end{align*}
  Next observe that, since $\zeta\circ\sdist=1$ on $\Sigma^*$
  and $\Delta_{\Sigma^*}H^*=V^*=0$, we have
  \begin{align*}
    \int_{\Sigma^*(t)}|\nabla_{\Sigma^*(t)}H^*|^2\,d\mathcal{H}^{d-1}
    &=-\int_{\Sigma^*(t)}H^*\Delta_{\Sigma^*(t)}H^*\,d\mathcal{H}^{d-1}
      =0.
  \end{align*}
  Hence $|\nabla_{\Sigma^*(t)}H^*(\cdot,t)|=0$ on $\Sigma^*(t)$.
  Since $H^*$ is $C^2$, we have
  $|\nabla_{\Sigma^*(t)}H^*|=O(\sdist(\cdot,t))$ in $\mathcal{U}_\delta(t)$.
  Therefore
  \begin{align*}
    \int_{\Sigma(t)}(\zeta\circ\sdist)^2|\nabla_{\Sigma^*(t)}H^*(\cdot,t)|^2\,d\mathcal{H}^{d-1}
    &\lesssim\int_{\mathcal{U}_\delta(t)}\sdist(\cdot,t)^2\,dx
      \lesssim\mathcal{E}(t). \qedhere
  \end{align*}
\end{proof}

\begin{proof}[Proof of Theorem~\ref{thm:stability-stationary-sol}]
  We start by estimating $\mathcal{E}(T')-\mathcal{E}(0)$ in the spirit of the
  formal computation that lead to~\eqref{eq:first-estimate-dt-E}.
  We write
  \begin{align*}
    \mathcal{E}(T')-\mathcal{E}(0)
    =\,\,&\mathcal{H}^{d-1}(\Sigma(T'))-\mathcal{H}^{d-1}(\Sigma(0)) \\
    &-\left(
      \int_{\Sigma(T')}\nu(\cdot,T')\cdot\xi(\cdot,T')\,d\mathcal{H}^{d-1}
      -\int_{\Sigma(0)}\nu(\cdot,0)\cdot\xi(\cdot,0)\,d\mathcal{H}^{d-1}
      \right).
  \end{align*}
  Using~\eqref{eq:weak-formulation-kappa-curvature}
  and~\eqref{eq:phi-V-weak-formulation}, we can mirror the formal computations
  and obtain the following integral version
  \begin{equation}
    \label{eq:integral-ineq}
    \begin{split}
      &\mathcal{E}(T')-\mathcal{E}(0) \\
      \leq\,\,
      &-\frac{1}{2}\int_0^{T'}\int_{\Sigma(t)}|\nabla_{\Sigma(t)}H|^2\,d\mathcal{H}^{d-1}\,dt \\
      &-\frac{1}{2}\int_0^{T'}\int_{\Sigma(t)}|\nabla_{\Sigma(t)}\varphi_V-\nabla_{\Sigma(t)}(\nabla\cdot\xi)|^2\,d\mathcal{H}^{d-1}\,dt \\
      &-\int_0^{T'}\int_{\Sigma(t)}\nu\cdot(\partial_t\xi)\,d\mathcal{H}^{d-1}\,dt
        +\frac{1}{2}\int_0^{T'}\int_{\Sigma(t)}|\nabla_{\Sigma(t)}(\nabla\cdot\xi)|^2\,d\mathcal{H}^{d-1}\,dt.
    \end{split}
  \end{equation}
  Using Lemma~\ref{lem:estimate-grad-sigma-div-xi} and the fact that
  $\partial_t\xi=0$, we have
  \begin{align*}
    \mathcal{E}(t)
    \leq&\,\,-\frac{1}{2}\int_0^{T'}\int_{\Sigma(t)}|\nabla_{\Sigma(t)}H|^2\,d\mathcal{H}^{d-1}\,dt \\
    &\,\,-\frac{1}{2}\int_0^{T'}\int_{\Sigma(t)}|\nabla_{\Sigma(t)}\varphi_V-\nabla_{\Sigma(t)}(\nabla\cdot\xi)|^2\,d\mathcal{H}^{d-1}\,dt
          +C\int_0^{T'}\mathcal{E}(t)\,dt.
  \end{align*}
  Now we estimate the bulk error $\mathcal{F}(t)$.
  We have
  \begin{align*}
    \mathcal{F}(T')-\mathcal{F}(0)
    &=\int_0^{T'}\int_{\Sigma(t)}\vartheta V\,d\mathcal{H}^{d-1}\,dt
      +\int_0^{T'}\int_{\mathbb{R}^d}(\chi-\chi^*)\partial_t\vartheta\,dx\,dt.
  \end{align*}
  The second term is zero because $\theta$ is independent of $t$.
  For the first term we use~\eqref{eq:phi-V-weak-formulation} and Young's
  inequality to obtain, for any $\eps>0$,
  \begin{align*}
    &\int_0^{T'}\int_{\Sigma(t)}\vartheta V\,d\mathcal{H}^{d-1}\,dt \\
    =\,\,&-\int_0^{T'}\int_{\Sigma(t)}(\nabla_{\Sigma(t)}\vartheta)\cdot\nabla_{\Sigma(t)}\varphi_V\,d\mathcal{H}^{d-1}\,dt \\
    \leq\,\,
    &\frac{1}{2\eps}\int_0^{T'}\int_{\Sigma(t)}|\nabla_{\Sigma(t)}\vartheta|^2\,d\mathcal{H}^{d-1}\,dt
     +\frac{\eps}{2}\int_0^{T'}\int_{\Sigma(t)}|\nabla_{\Sigma(t)}\varphi_V|^2\,d\mathcal{H}^{d-1}\,dt \\
    \leq\,\,&\frac{1}{2\eps}\int_0^{T'}\int_{\Sigma(t)}|\nabla_{\Sigma(t)}\vartheta|^2\,d\mathcal{H}^{d-1}\,dt \\
    &+\eps\int_0^{T'}\int_{\Sigma(t)}|\nabla_{\Sigma(t)}\varphi_V-\nabla_{\Sigma(t)}(\nabla\cdot\xi)|^2\,d\mathcal{H}^{d-1}\,dt \\
      &+\eps \int_0^{T'}\int_{\Sigma(t)}|\nabla_{\Sigma(t)}(\nabla\cdot\xi)|^2\,d\mathcal{H}^{d-1}\,dt.
  \end{align*}
  The first term is bounded by $\mathcal{E}(t)$, since $\vartheta$ is a
  Lipschitz function of the signed distance $\sdist$,
  cf.~\eqref{eq:def-vartheta},
  and $\int_{\Sigma(t)}|\nabla_{\Sigma(t)}\sdist|^2\lesssim\mathcal{E}$.
  The second term is absorbed by the second dissipation
  term in~\eqref{eq:first-estimate-dt-E}, for $\eps<\frac{1}{2}$.
  The third term is estimated by Lemma~\ref{lem:estimate-grad-sigma-div-xi}.
\end{proof}

\section{Stability of surface diffusion in 2D}

In this section we study the stability of non-stationary solutions to surface
diffusion, but we restrict ourselves to the 2D case.
Then the evolution reads
\begin{align*}
  V=\partial_s^2\kappa,
\end{align*}
where $\partial_s$ denotes differentiation with respect to the arc length
parameter $s$ on the surface $\Sigma(t)$ and $\kappa$ denotes the curvature of
$\Sigma(t)$.

We assume that $\Sigma(t)$ is a weak solution to surface diffusion flow
according to Definition~\ref{def:weak} and $d=2$.

Let $\tau,\,\tau^*$ denote
the tangent vectors and $\nu=J\tau,\,\nu^*=J\tau^*$ denote the outward unit
normal vectors on $\Sigma$ and $\Sigma^*$ respectively, where $J$ is the counter-clockwise rotation by 90 degrees.
Then the curvatures $\kappa$ and $\kappa^*$ are given by
$\kappa=-(\partial_s\tau)\cdot\nu$ and
$\kappa^*=-(\partial_{s^*}\tau^*)\cdot\nu^*$.
Note that the weak formulation of this equality is exactly~\eqref{eq:def-mean-curvature-weak}.

The second main result of the present work is the following stability and uniqueness statement for solutions to surface diffusion.

\begin{theorem}\label{thm:main-thm}
  Let $d=2$ and let $(\Sigma^*(t))_{t\in[0,T^*]}$ and $(\Sigma(t))_{t\in[0,T]}$ be
  a strong and a weak solution to surface diffusion according to
  Definition~\ref{def:strong}, respectively Definition~\ref{def:weak}.
  Then there exists a constant $C<\infty$ depending only on $\Sigma^*$ and
  $\mathcal{H}^1(\Sigma(0))$ such that
  \begin{equation}
    \label{eq:main-thm-gronwall-estimate}
    \mathcal{E}(T')+\mathcal{F}(T')\leq Ce^{CT'}(\mathcal{E}(0)+\mathcal{F}(0))
    \quad\text{for a.e.\ $T'\in[0,\min\{T,T^*\}]$.}
  \end{equation}
  In particular, if $\Omega(0)=\Omega^*(0)$ up to Lebesgue null sets, then
  $\Omega(t)=\Omega^*(t)$ up to Lebesgue null sets for a.e.\ $t\in[0,\min\{T,T^*\}]$.
\end{theorem}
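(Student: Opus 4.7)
The plan is to follow the same strategy as for Theorem~\ref{thm:stability-stationary-sol}: derive the integral version of~\eqref{eq:first-estimate-dt-E}, estimate the resulting right-hand side by $C(\mathcal{E}+\mathcal{F})$, and apply Gronwall. Two terms are genuinely new compared to the stationary case: the time derivative $-\int_\Sigma\nu\cdot\partial_t\xi$ (which previously vanished), and $\tfrac{1}{2}\int_\Sigma|\nabla_\Sigma(\nabla\cdot\xi)|^2$ (which Lemma~\ref{lem:estimate-grad-sigma-div-xi} no longer controls, since $\nabla_{\Sigma^*}H^*\neq 0$ in general). Neither vanishes to leading order on its own; the key point is that the dangerous parts of these two terms together with the raw dissipation $-\tfrac{1}{2}\int_\Sigma|\nabla_\Sigma H|^2$ sum to zero at $\Sigma=\Sigma^*$, driven by the sharp energy-dissipation identity $\tfrac{d}{dt}|\Sigma^*(t)|=-\int_{\Sigma^*}|\nabla_{\Sigma^*}H^*|^2$ obeyed by the smooth solution.

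To implement this, I would first compute $\partial_t\xi$ using the standard signed-distance evolution $\partial_t\sdist=-V^*\circ\pi^*$ valid in the tubular neighborhood:
\[
  \partial_t\xi=-\zeta'(\sdist)(V^*\!\circ\pi^*)\nabla\sdist-\zeta(\sdist)\nabla(V^*\!\circ\pi^*).
\]
Because $V^*\!\circ\pi^*$ is constant along normals of $\Sigma^*$, a divergence-theorem computation on $\Omega$ combined with the identity $\int_{\Omega^*}\nabla\cdot\partial_t\xi=0$ (which itself encodes the sharp dissipation on $\Sigma^*$ via $\int_{\Sigma^*}V^*H^*=-\int_{\Sigma^*}|\nabla_{\Sigma^*}H^*|^2$) converts $-\int_\Sigma\nu\cdot\partial_t\xi$ into the bulk integral $-\int(\chi_\Omega-\chi_{\Omega^*})\nabla\cdot\partial_t\xi$ supported in the tube $\mathcal{U}_\delta$. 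Using the explicit formula $\nabla\cdot\xi=\zeta'(\sdist)+\zeta(\sdist)H^*$, Young's inequality and repeated use of Lemma~\ref{lem:various-estimates} (to compare $\nabla_\Sigma$ with $\nabla_{\Sigma^*}$ on the smooth object $H^*$), the remaining errors are absorbed into $C(\mathcal{E}+\mathcal{F})$. Transferring integrals from $\Sigma$ to $\Sigma^*$ along the projection $\pi^*$ relies on the density bound~\eqref{eq:bound-balls}.

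For the bulk error $\mathcal{F}$, the stationary proof carries over except for one genuinely new contribution, $\int(\chi_\Omega-\chi_{\Omega^*})\partial_t\vartheta$, which was zero for stationary $\Sigma^*$. Since $\partial_t\vartheta=-\theta'(\sdist)(V^*\!\circ\pi^*)$ is bounded and supported in $\mathcal{U}_\delta$, this term splits into a piece outside $\mathcal{U}_\delta$ (controlled directly by $\mathcal{F}$, since $|\vartheta|=\delta$ there) and a piece inside $\mathcal{U}_\delta$ (controlled by a combination of $\mathcal{E}$ and $\mathcal{F}$ after a tubular-coordinate argument that relates the bulk volume of $\Omega\triangle\Omega^*$ near $\Sigma^*$ to the normal mismatch captured by $\mathcal{E}$).

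The main obstacle, as anticipated in the introduction, is the sharp comparison of the $H^{-1}$-velocity potentials $\varphi_V$ on $\Sigma$ and the extension of $\varphi_{V^*}$ living on $\Sigma^*$, needed to bound the non-coercive remainder of $\int|\nabla_\Sigma\varphi_V-\nabla_\Sigma(\nabla\cdot\xi)|^2$ in terms of purely geometric errors. This is the point where the restriction $d=2$ and the dependence of $C$ on $\mathcal{H}^1(\Sigma(0))$ enter, through the zero-average Poincar\'e-type inequality on the closed curve $\Sigma(t)$ sketched at the end of Lemma~\ref{lem:various-estimates}, whose constant scales with $(\mathcal{H}^1(\Sigma(t)))^2 \leq (\mathcal{H}^1(\Sigma(0)))^2$ by the monotonicity built into~\eqref{eq:area-decay-assumption}.
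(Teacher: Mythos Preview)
Your proposal has a genuine gap at its core. You correctly observe that at $\Sigma=\Sigma^*$ the four terms on the right of~\eqref{eq:first-estimate-dt-E} sum to zero, but making this quantitative would require controlling the difference $\int_\Sigma|\partial_s\kappa|^2-\int_\Sigma\zeta^2|\partial_{s^*}H^*|^2$, and the weak curvature $\kappa$ of $\Sigma$ is not close to $H^*$ in any norm bounded by $\mathcal{E}+\mathcal{F}$; there is simply no mechanism in your outline that ties the curvature dissipation $-\tfrac12\int_\Sigma|\partial_s\kappa|^2$ to the $O(1)$ term $\tfrac12\int_\Sigma|\partial_s(\nabla\cdot\xi)|^2$. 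Your bulk-integral treatment of $-\int_\Sigma\nu\cdot\partial_t\xi$ is similarly insufficient: writing it as $-\int(\chi-\chi^*)\nabla\cdot\partial_t\xi$ bounds it by a constant times the \emph{unweighted} measure of $\Omega\triangle\Omega^*$ in the tube, which is not controlled by $\mathcal{F}=\int|\chi-\chi^*||\vartheta|$ since $|\vartheta|$ vanishes on $\Sigma^*$. Finally, the ``main obstacle'' you name in the last paragraph is not one: the term $-\tfrac12\int_\Sigma|\partial_s\varphi_V-\partial_s(\nabla\cdot\xi)|^2$ already carries a negative sign in~\eqref{eq:first-estimate-dt-E} and needs no upper bound.

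The paper's route is algebraically different and supplies exactly the missing link. It introduces the divergence-controlled extension $B$ of $V^*\nu^*$ from Corollary~\ref{cor:existence-B} together with a second potential $\varphi_{\nu\cdot B}$ defined \emph{on the weak solution} $\Sigma$ (componentwise, $\partial_s^2\varphi_{\nu_i\cdot B}=\nu_i\cdot B-\langle\nu_i\cdot B\rangle$). Testing the weak curvature identity~\eqref{eq:weak-formulation-kappa-curvature} against $B$ and completing one more square turns the raw dissipation into $-\tfrac12\int_\Sigma(\partial_s\kappa-\partial_s\varphi_{\nu\cdot B})^2$, after which a Gauss-theorem identity for $B\wedge\xi$ recombines $-\int_\Sigma\nu\cdot\partial_t\xi$ with the newly produced terms into transport errors $(\partial_t+B\cdot\nabla)\xi+(\nabla B)^T\xi$ and $(\partial_t+B\cdot\nabla)|\xi|^2$ that vanish to leading order. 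The sole $O(1)$ survivor is $\tfrac12\int_\Sigma(\partial_s\varphi_{\nu\cdot B}-\partial_s(\nabla\cdot\xi))^2$, and \emph{this} is where the componentwise Poincar\'e inequality, the extended potential $\varphi^*_{\xi\cdot B}$, Gauss--Bonnet (to evaluate $\int_{\Sigma_i}\kappa$), and Lemma~\ref{lem:estimate-nu-dot-B} enter. The $\mathcal{F}$-estimate likewise routes through $B$: the transport identity $|\partial_t\vartheta+(B\cdot\nabla)\vartheta|\lesssim|\vartheta|$ yields $C\mathcal{F}$ directly, so no tubular-coordinate comparison of bulk volume with $\mathcal{E}$ is needed.
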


As before, it suffices to show that
\begin{equation}
  \label{eq:estimate-entropy}
  \mathcal{E}(T')+\mathcal{F}(T')-\mathcal{E}(0)-\mathcal{F}(0)
  \leq C\int_0^{T'}(\mathcal{E}(t)+\mathcal{F}(t))\,dt
  \quad\text{for a.e.\ $T'\in[0,\min\{T,T^*\}]$.}
\end{equation}

We again extend the geometric quantities
$\kappa^*,\,\nu^*,\,\tau^*$ and the operator $\partial_{s^*}$ to all of
$\mathbb{R}^2$ as in Definition~\ref{def:extension-quantities-Sigma-star}.

\begin{definition}
  Let $\eta:\mathbb{R}\rightarrow[0,\infty)$ be as in
  Definition~\ref{def:extension-quantities-Sigma-star},
  and define $\kappa^*,\,\tau^*,\,\nu^*$ and $\partial_{s^*}$ by
  \begin{align*}
    \kappa^*\coloneqq\eta\Delta\sdist,\quad
    \nu^*\coloneqq\eta\nabla\sdist,
    \quad\tau^*\coloneqq J^{-1}\nu^*,
    \quad\partial_{s^*}\coloneqq\tau^*\cdot\nabla.
  \end{align*}
\end{definition}

Then, on $\mathcal{U}_\delta$,
\begin{equation}
  \label{eq:gradient-pi-star}
  \begin{split}
    \nabla\pi^*
    &=\Id-\nu^*\otimes\nu^*-\sdist\nabla\nu^*
    =\tau^*\otimes\tau^*-\sdist\nabla\nu^*
  \end{split}
\end{equation}
and therefore
\begin{equation}
  \label{eq:partial-star-pi-star}
  \partial_{s^*}\pi^*
  =(1+\sdist\kappa^*)\tau^*\quad\text{on $\mathcal{U}_\delta$.}
\end{equation}

Next, we recall the 2D version of Lemma~\ref{lem:various-estimates}.
We suppress the $t$-dependence in the following statement and the proof of
Theorem~\ref{thm:main-thm}.

\begin{lemma}[{Lemma~\ref{lem:various-estimates} in 2D}]\label{lem:partial-s-minus-partial-s-star-bdd-by-E}
  For any weakly differentiable function $u$ on $\mathbb{R}^2$, we have
  \begin{equation}
    \label{eq:partial-s-minus-partial-s-star-bdd-tau-minus-tau-star}
    \zeta|\partial_s u-\partial_{s^*}u|^2
    \leq|\nabla u|^2\zeta|\tau-\tau^*|^2\quad\text{a.e.\ on $\Sigma$.}
  \end{equation}
  Furthermore
  \begin{equation}
    \label{eq:tau-tau-star-bdd-by-E}
    \zeta|\tau-\tau^*|^2
    =\zeta|\nu-\nu^*|^2
    \leq 2(1-\nu\cdot\xi)\quad\text{on $\Sigma$,}
  \end{equation}
  and
  \begin{equation}
    \label{eq:tau-cdot-nabla-sdist-bdd-by-E}
    |\tau\cdot\nabla\sdist|^2
    \leq 2(1-\nu\cdot\xi)\quad\text{on $\Sigma$.}
  \end{equation}
\end{lemma}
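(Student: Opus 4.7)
The three estimates are 2D specializations of Lemma~\ref{lem:various-estimates}, and the plan is to prove each by short algebra exploiting three facts: (a) the identifications $\partial_s = \tau\cdot\nabla$ and $\partial_{s^*}=\tau^*\cdot\nabla$; (b) the isometry $\nu = J\tau$, $\nu^*=J\tau^*$ with $J$ the $90^\circ$ rotation; and (c) the identity $\xi = \zeta\nabla\sdist = \zeta\nu^*$ on $\supp(\zeta\circ\sdist)$, which is valid since $\eta\equiv 1$ on $\supp\zeta$ by Definition~\ref{def:extension-quantities-Sigma-star} and $\supp(\zeta\circ\sdist)\subset\mathcal{U}_\delta$ by assumption~\eqref{eq:assumption-delta}.

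For \eqref{eq:partial-s-minus-partial-s-star-bdd-tau-minus-tau-star}, I would simply write
\begin{align*}
  \partial_s u - \partial_{s^*} u = (\tau-\tau^*)\cdot\nabla u,
\end{align*}
apply Cauchy--Schwarz, and multiply by $\zeta$. For \eqref{eq:tau-tau-star-bdd-by-E}, the equality $|\tau-\tau^*|^2 = |\nu-\nu^*|^2$ is immediate from $J$ being an isometry. Since $|\nu|=|\nu^*|=1$ on $\supp\zeta$, we have $|\nu-\nu^*|^2 = 2-2\nu\cdot\nu^*$, and using $\zeta\nu^* = \xi$ on $\supp\zeta$ together with $\zeta\leq 1$ gives
\begin{align*}
  \zeta|\nu-\nu^*|^2 = 2(\zeta - \nu\cdot\xi) \leq 2(1-\nu\cdot\xi).
\end{align*}
For \eqref{eq:tau-cdot-nabla-sdist-bdd-by-E}, I use the 2D orthonormal decomposition $\nabla\sdist = (\tau\cdot\nabla\sdist)\tau + (\nu\cdot\nabla\sdist)\nu$ together with $|\nabla\sdist|=1$ on $\mathcal{U}_\delta$ to obtain $|\tau\cdot\nabla\sdist|^2 = 1 - (\nu\cdot\nabla\sdist)^2 \leq 1 - \zeta^2(\nu\cdot\nabla\sdist)^2$. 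Completing the square via $(1-\zeta\,\nu\cdot\nabla\sdist)^2\geq 0$ then yields
\begin{align*}
  1 - \zeta^2(\nu\cdot\nabla\sdist)^2 \leq 2(1-\zeta\,\nu\cdot\nabla\sdist) = 2(1-\nu\cdot\xi),
\end{align*}
mirroring verbatim the corresponding step in the proof of Lemma~\ref{lem:various-estimates}.

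There is no substantive obstacle: every step is a one-line manipulation, and the proof essentially duplicates that of Lemma~\ref{lem:various-estimates} with $\nabla_\Sigma$ replaced by $\partial_s = \tau\cdot\nabla$ and $J$ used to interchange tangents and normals. The only bookkeeping concerns the support of $\zeta\circ\sdist$, so that the identification $\zeta\nu^* = \xi$ can be made freely.
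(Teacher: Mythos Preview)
Your proposal is correct and follows the same strategy as the paper's proof of Lemma~\ref{lem:various-estimates}, of which this lemma is explicitly the 2D specialization. The only point worth noting is that your direct computation $\partial_s u-\partial_{s^*}u=(\tau-\tau^*)\cdot\nabla u$ yields the sharp constant in~\eqref{eq:partial-s-minus-partial-s-star-bdd-tau-minus-tau-star} without the extra factor~$2$ that appears in the general-$d$ estimate~\eqref{eq:estimate-operators}; this is precisely why the 2D statement is formulated with the tighter bound, and your use of $\partial_s=\tau\cdot\nabla$ rather than the normal-projection formula is the natural way to obtain it.
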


\begin{remark}
  From~\eqref{eq:partial-s-minus-partial-s-star-bdd-tau-minus-tau-star}
  and~\eqref{eq:tau-tau-star-bdd-by-E}, it follows that for any $v\in
  L^2(\Sigma),\,u\in\dot H^1(\mathbb{R}^2)\cap\dot W^{1,\infty}(\mathbb{R}^2)$,
  we have
  \begin{equation}
    \label{eq:replacing-partial-s-partial-s-star}
    \begin{split}
      \int_\Sigma\zeta(v-\partial_s u)^2\,ds
      &\leq 2\int_\Sigma\zeta(v-\partial_{s^*}u)^2\,ds
        +4\|\nabla u\|_\infty^2\mathcal{E}.
    \end{split}
  \end{equation}
  We refer to the above inequalities as
  \emph{replacing $\partial_s$ by $\partial_{s^*}$}
  or vice versa when performing estimates of this form.
  Furthermore
  \begin{equation}
    \label{eq:partial-s-sdist-bdd-by-E}
    \int_\Sigma(\partial_s(\zeta\circ\sdist))^2\,ds
    \leq\|\zeta'\|_\infty^2\int_\Sigma(\tau\cdot\nabla\sdist)^2\,ds
    \leq 2\|\zeta'\|_\infty^2\mathcal{E},
  \end{equation}
  and
  \begin{equation}
    \label{eq:partial-s-star-zeta-equals-zero}
    \partial_{s^*}(\zeta\circ\sdist)
    =(\zeta'\circ\sdist)\tau^*\cdot\nu^*
    =0.
  \end{equation}
  Finally, for any $u,v\in L^2(\Sigma)\cap L^\infty(\Sigma)$ we have
  \begin{equation}
    \label{eq:smuggle-in-zeta}
    \begin{split}
      \int_\Sigma(u-v)^2\,ds
      &\leq 2\int_\Sigma\zeta^2(u-v)^2\,ds
        +2\int_\Sigma(1-\zeta)^2(u-v)^2\,ds \\
      &\leq 2\int_\Sigma\zeta^2(u-v)^2\,ds+4\|u-v\|_\infty^2\mathcal{E}.
    \end{split}
  \end{equation}
\end{remark}

For each Jordan boundary $\Sigma_i(t)$ according to Definition~\ref{def:jordan-boundaries},
we let $\nu_i(\cdot,t)\coloneqq\nu(\cdot,t)|_{\Sigma_i(t)}$
be the restriction of the normal to $\Sigma_i(t)$.
Let $B:\mathbb{R}^2\times[0,T^*]\rightarrow\mathbb{R}^2$ be a smooth vector field.
At each time $t$ and each path component $\Sigma_i(t)$,
we let $\varphi_{\nu_i\cdot B}$ the zero-average potential $\varphi_{\nu_i\cdot B}(\cdot,t)$ such that
\begin{align*}
  \partial_{s}^2\varphi_{\nu_i\cdot B}(\cdot,t)
  &=\nu_i(\cdot,t)\cdot B(\cdot,t)-\fint_{\Sigma_i(t)}\nu_i(\cdot,t)\cdot B(\cdot,t)\,ds
  \quad\text{a.e.}
\end{align*}
with $\int_{\Sigma_i(t)}\varphi_{\nu_i\cdot B}(\cdot,t)\,ds=0$.
This solution exists by Lemma~\ref{lem:existence-sol-poisson-eq}.

We then define $\varphi_{\nu\cdot B}(\cdot,t):\Sigma(t)\rightarrow\mathbb{R}$ by
$\varphi_{\nu\cdot B}(\cdot,t)=\varphi_{\nu_i\cdot B}(\cdot,t)$ on $\Sigma_i(t)$.

Now we are ready to prove our second main result.

\begin{proof}[Proof of Theorem~\ref{thm:main-thm}]
  We continue from~\eqref{eq:first-estimate-dt-E} and keep the informal style.
  The differential inequalities can be turned into rigorous integral
  inequalities just as in the proof of Theorem~\ref{thm:main-thm}.
  In addition, we suppress the dependence on $t$.
  
  Adding zero to~\eqref{eq:first-estimate-dt-E} in terms
  of~\eqref{eq:weak-formulation-kappa-curvature} and using
  \begin{align*}
    \int_\Sigma\kappa\nu\cdot B\,ds
    &=\sum_{i=1}^{k(t)}\int_{\Sigma_i}\kappa \left(
      \nu_i\cdot B-\fint_{\Sigma_i}\nu_i\cdot B\,ds'
      \right)\,ds
      +\sum_{i=1}^{k(t)}\left( \int_{\Sigma_i}\kappa\,ds \right)\fint_{\Sigma_i}\nu_i\cdot B\,ds \\
    &=-\int_\Sigma\partial_s\kappa\partial_s\varphi_{\nu\cdot B}\,ds
      +\sum_{i=1}^{k(t)}\left( \int_{\Sigma_i}\kappa\,ds \right)\fint_{\Sigma_i}\nu_i\cdot B\,ds,
  \end{align*}
  we complete another square
  \begin{align*}
    \frac{d}{dt}\mathcal{E}(t)
    \leq&\,\,-\frac{1}{2}\int_\Sigma(\partial_s\kappa)^2\,ds
       -\frac{1}{2}\int_\Sigma(\partial_s\varphi_V-\partial_s(\nabla\cdot\xi))^2\,ds \\
     &\,\,-\int_\Sigma\nu\cdot(\partial_t\xi)\,ds
       +\frac{1}{2}\int_\Sigma(\partial_s(\nabla\cdot\xi))^2\,ds \\
     &\,\,+\int_\Sigma\nabla\cdot B\,ds
       -\int_\Sigma\nu\cdot\nabla B\nu\,ds \\
     &\,\,+\int_\Sigma\partial_s\kappa\partial_s\varphi_{\nu\cdot B}\,ds
       -\sum_{i=1}^{k(t)}\left( \int_{\Sigma_i}\kappa\,ds \right)\fint_{\Sigma_i}\nu_i\cdot B\,ds \\
    =&\,\,-\frac{1}{2}\int_\Sigma(\partial_s\kappa-\partial_s\varphi_{\nu\cdot B})^2\,ds
       -\frac{1}{2}\int_\Sigma(\partial_s\varphi_V-\partial_s(\nabla\cdot\xi))^2\,ds \\
     &\,\,-\int_\Sigma\nu\cdot(\partial_t\xi)\,ds
       +\frac{1}{2}\int_\Sigma(\partial_s(\nabla\cdot\xi))^2\,ds \\
     &\,\,+\int_\Sigma\nabla\cdot B\,ds
       -\int_\Sigma\nu\cdot\nabla B\nu\,ds
       -\frac{1}{2}\int_\Sigma(\partial_s\varphi_{\nu\cdot B})^2\,ds \\
     &\,\,-\sum_{i=1}^{k(t)}\left( \int_{\Sigma_i}\kappa\,ds \right)\fint_{\Sigma_i}\nu_i\cdot B\,ds.
  \end{align*}
  Now we complete one more square by adding zero in terms of
  $\int_\Sigma(\partial_s(\nabla\cdot\xi))\partial_s\varphi_{\nu\cdot B}\,ds$ and
  use the definition of $\varphi_{\nu\cdot B}$ to obtain
  \begin{align*}
    \frac{d}{dt}\mathcal{E}(t)
    \leq&\,\,-\frac{1}{2}\int_\Sigma(\partial_s\kappa-\partial_s\varphi_{\nu\cdot B})^2\,ds
       -\frac{1}{2}\int_\Sigma(\partial_s\varphi_V-\partial_s(\nabla\cdot\xi))^2\,ds \\
     &\,\,+\frac{1}{2}\int_\Sigma(\partial_s(\nabla\cdot\xi))^2\,ds
       +\frac{1}{2}\int_\Sigma(\partial_s\varphi_{\nu\cdot B})^2\,ds \\
     &\,\,+\int_\Sigma(\partial_s(\nabla\cdot\xi))\partial_s\varphi_{\nu\cdot B}\,ds
       -\int_\Sigma(\partial_s(\nabla\cdot\xi))\partial_s\varphi_{\nu\cdot B}\,ds \\
     &\,\,-\int_\Sigma\nu\cdot(\partial_t\xi)\,ds
       +\int_\Sigma\nabla\cdot B\,ds
       -\int_\Sigma\nu\cdot\nabla B\nu\,ds \\
     &\,\,-\sum_{i=1}^{k(t)}\left( \int_{\Sigma_i}\kappa\,ds \right)\fint_{\Sigma_i}\nu_i\cdot B\,ds \\
    =&\,\,-\frac{1}{2}\int_\Sigma(\partial_s\kappa-\partial_s\varphi_{\nu\cdot B})^2\,ds
       -\frac{1}{2}\int_\Sigma(\partial_s\varphi_V-\partial_s(\nabla\cdot\xi))^2\,ds \\
     &\,\,+\frac{1}{2}\int_\Sigma(\partial_s\varphi_{\nu\cdot B}-\partial_s(\nabla\cdot\xi))^2\,ds \\
     &\,\,-\int_\Sigma\nu\cdot B(\nabla\cdot\xi)\,ds
       +\sum_{i=1}^{k(t)}\left( \int_{\Sigma_i}(\nabla\cdot\xi)\,ds \right)\fint_{\Sigma_i}\nu_i\cdot B\,ds \\
     &\,\,-\int_\Sigma\nu\cdot(\partial_t\xi)\,ds
       +\int_\Sigma\nabla\cdot B\,ds
       -\int_\Sigma\nu\cdot\nabla B\nu\,ds \\
     &\,\,-\sum_{i=1}^{k(t)}\left( \int_{\Sigma_i}\kappa\,ds \right)\fint_{\Sigma_i}\nu_i\cdot B\,ds.
  \end{align*}
  We recall that every exact differential form is closed and apply Gauss' theorem to
  $B\wedge\xi\coloneqq B\otimes\xi-\xi\otimes B$:
  \begin{equation}
    \label{eq:computation-Gauss-B-wedge-xi}
    \begin{split}
      0&=\int_{\Omega}\nabla\cdot(\nabla\cdot(B\wedge\xi))\,dx \\
       &=\int_{\Sigma}\nu\cdot(\nabla\cdot(B\wedge\xi))\,ds \\
       &=\int_\Sigma\bigg[(\nabla\cdot\xi)\nu\cdot B
         +\nu\cdot(\xi\cdot\nabla)B
         -(\nabla\cdot B)\nu\cdot\xi
         -\nu\cdot(B\cdot\nabla)\xi
         \bigg]\,ds.
    \end{split}
  \end{equation}
  Now we add zero in terms of~\eqref{eq:computation-Gauss-B-wedge-xi} and
  rearrange terms
  \begin{align*}
    \frac{d}{dt}\mathcal{E}(t)
    \leq&\,\,-\frac{1}{2}\int_\Sigma(\partial_s\kappa-\partial_s\varphi_{\nu\cdot B})^2\,ds
       -\frac{1}{2}\int_\Sigma(\partial_s\varphi_V-\partial_s(\nabla\cdot\xi))^2\,ds \\
     &\,\,+\frac{1}{2}\int_\Sigma(\partial_s\varphi_{\nu\cdot B}-\partial_s(\nabla\cdot\xi))^2\,ds
       -\int_\Sigma\nu\cdot B(\nabla\cdot\xi)\,ds \\
     &\,\,-\int_\Sigma\nu\cdot(\partial_t\xi)\,ds
       +\int_\Sigma\nabla\cdot B\,ds
       -\int_{\Sigma}\nu\cdot\nabla B\nu\,ds \\
     &\,\,+\sum_{i=1}^{k(t)}\left( \int_{\Sigma_i}(\nabla\cdot\xi)\,ds-\int_{\Sigma_i}\kappa\,ds \right)
       \fint_{\Sigma_i}\nu_i\cdot B\,ds \\
     &\,\,+\int_\Sigma\bigg[
       (\nabla\cdot\xi)\nu\cdot B
       +\nu\cdot(\xi\cdot\nabla)B
       -(\nabla\cdot B)\nu\cdot\xi
       -\nu\cdot(B\cdot\nabla)\xi
       \bigg]\,ds \\
    =&\,\,-\frac{1}{2}\int_\Sigma(\partial_s\kappa-\partial_s\varphi_{\nu\cdot B})^2\,ds
       -\frac{1}{2}\int_\Sigma(\partial_s\varphi_V-\partial_s(\nabla\cdot\xi))^2\,ds \\
     &\,\,+\frac{1}{2}\int_\Sigma(\partial_s\varphi_{\nu\cdot B}-\partial_s(\nabla\cdot\xi))^2\,ds
       +\int_\Sigma(\nabla\cdot B)(1-\nu\cdot\xi)\,ds \\
     &\,\,-\int_\Sigma(\nu-\xi)\cdot\nabla B(\nu-\xi)\,ds
       -\frac{1}{2}\int_\Sigma(\partial_t|\xi|^2+(B\cdot\nabla)|\xi|^2)\,ds \\
     &\,\,-\int_\Sigma(\partial_t\xi+(B\cdot\nabla)\xi+(\nabla B)^T\xi)\cdot(\nu-\xi)\,ds \\
     &\,\,+\sum_{i=1}^{k(t)}\left( \int_{\Sigma_i}(\nabla\cdot\xi)\,ds-\int_{\Sigma_i}\kappa\,ds \right)
       \fint_{\Sigma_i}\nu_i\cdot B\,ds \\
  \end{align*}
  All terms are either non-positive or directly estimated by $C(1-\nu\cdot\xi)$,
  except for
  \begin{equation}
    \label{eq:terms-to-estimate-1}
    \begin{split}
      &\,\,\frac{1}{2}\int_\Sigma(\partial_s\varphi_{\nu\cdot B}-\partial_s(\nabla\cdot\xi))^2\,ds
        -\frac{1}{2}\int_\Sigma(\partial_t|\xi|^2+(B\cdot\nabla)|\xi|^2)\,ds \\
      &\,\,-\int_\Sigma(\partial_t\xi+(B\cdot\nabla)\xi+(\nabla B)^T\xi)\cdot(\nu-\xi)\,ds \\
      &\,\,+\sum_{i=1}^{k(t)}\left( \int_{\Sigma_i}(\nabla\cdot\xi)\,ds-\int_{\Sigma_i}\kappa\,ds \right)
        \fint_{\Sigma_i}\nu_i\cdot B\,ds.
    \end{split}
  \end{equation}
  To estimate those terms, we choose a suitable vector field $B$ according to
  Corollary~\ref{cor:existence-B} below.
  
  \begin{remark}
    Until this point, we have not used the fact that we are in $\mathbb{R}^2$.
    Everything would have worked the same way in $\mathbb{R}^d$.
    However, we will soon use the Gauss--Bonnet theorem, which only applies to the
    Gauss curvature, not the mean curvature.
    Further we will have terms of the form $\partial_s\partial_{s^*}\varphi_f$ for
    some smooth potential $\varphi_f$ with $\partial_{s^*}^2\varphi_f=f$.
    In higher dimensions, we would want to estimate terms like
    $\nabla_{\Sigma}\nabla_{\Sigma^*}\varphi_f$ instead.
    Since $\nabla_{\Sigma^*} \varphi_f$ is not a tangential vector field on
    $\Sigma$, this creates additional curvature terms.
  \end{remark}

  We estimate the last two terms of~\eqref{eq:terms-to-estimate-1}.
  By the Gauss--Bonnet theorem, we have
  \begin{align*}
    \sum_{i=1}^{k(t)}\left( \int_{\Sigma_i(t)}\kappa\,ds  \right)\fint_{\Sigma_i(t)}\nu_i\cdot B\,ds
    =2\pi\sum_{i=1}^{k(t)}\frac{1}{\mathcal{H}^1(\Sigma_i(t))}\int_{\Sigma_i(t)}\nu_i\cdot B\,ds.
  \end{align*}
  Hence the last term in~\eqref{eq:terms-to-estimate-1} is estimated by
  Lemma~\ref{lem:estimate-nu-dot-B}.
  To estimate the second term in~\eqref{eq:terms-to-estimate-1},
  we introduce the trivial extension of the normal velocity vector $V^*\nu^*$ on $\mathcal{U}_\delta$ defined by
  \begin{align*}
    \overline B(\cdot,t)=(\eta\circ\sdist)(B\circ\pi^*).
  \end{align*}
  Then
  \begin{equation}
    \label{eq:overline-B-eq-V-star}
    \overline B\cdot\nabla\sdist
    =(\partial_{s^*}^2\Delta\sdist\circ\pi^*)
    =V^*\circ\pi^*
    \quad\text{on $\mathcal{U}_\delta$.}
  \end{equation}
  We compute
  \begin{equation}
    \label{eq:estimate-partial-t-plus-B-dot-nabla-|xi|^2}
    (\partial_t+(B\cdot\nabla))|\xi|^2
    =(\partial_t+(\overline B\cdot\nabla))(\zeta^2\circ\sdist)
    +(B-\overline B)\cdot\nabla(\zeta^2\circ\sdist).
  \end{equation}
  The first term is exactly zero on $\mathcal{U}_\delta$, since $\zeta^2$ is a function of
  the signed distance, i.e.
  \begin{align*}
    \partial_t(\zeta^2\circ\sdist)+\overline B\cdot\nabla(\zeta^2\circ\sdist)
    =0,
  \end{align*}
  cf.~\cite[Lem.\ 1.19]{Laux2021}.
  For the second term we use the Lipschitz estimate
  \begin{equation}
    \label{eq:B-overline-B-O-sdist}
    |B-\overline B|
    =|B\circ(\id-\pi^*)|
    \leq C|\sdist|
    \quad\text{on $\mathcal{U}_\delta$}
  \end{equation}
  and compute
  \begin{align*}
    |\nabla(\zeta^2\circ\sdist)|\leq
    2(\zeta\circ\sdist)|\zeta'\circ\sdist||\nabla\sdist|
    \leq C|\sdist|,
  \end{align*}
  where we used that $\zeta'(0)=0$ and hence
  $\zeta'(\sdist)\lesssim|\sdist|$.
  Thus the last term on the RHS
  of~\eqref{eq:estimate-partial-t-plus-B-dot-nabla-|xi|^2} is $O(|\sdist|^2)$.

  Now we estimate the transport operator $\partial_t\xi+(B\cdot\nabla)\xi+(\nabla B)^T\xi$.
  We compute
  \begin{align*}
    \partial_t\xi+(B\cdot\nabla)\xi+(\nabla B)^T\xi
    =&\,\,\left( (\partial_t+(B\cdot\nabla))(\zeta\circ\sdist) \right)\nabla\sdist \\
     &\,\,+(\zeta\circ\sdist)(\partial_t\nabla\sdist+(B\cdot\nabla)\nabla\sdist+(\nabla B)^T\nabla\sdist).
  \end{align*}
  The first term on the RHS is again $O(\sdist)$, as above.
  The second term reads, after again smuggling in $\overline B$
  \begin{align*}
    \left( \partial_t\nabla\sdist+(B\cdot\nabla)\nabla\sdist+(\nabla B)^T\nabla\sdist \right)
    =&\,\,\left(
       \partial_t\nabla\sdist+(\overline B\cdot\nabla)\nabla\sdist+(\nabla\overline B)^T\nabla\sdist
       \right) \\
     &\,\,+(B-\overline B)\cdot\nabla^2\sdist
       +(\nabla B-\nabla\overline B)^T\nabla\sdist.
  \end{align*}
  The first term vanishes identically on $\supp\zeta\circ\sdist$.
  The second term is again $O(\sdist)$ by~\eqref{eq:B-overline-B-O-sdist} on $\supp\zeta\circ\sdist$.
  For the last term we compute
  \begin{align*}
    (\nabla B-\nabla\overline B)^T\nabla\sdist
    &=(\nabla B-(\nabla B\circ\pi^*)\nabla\pi^*)^T\nabla\sdist \\
    &=(\nabla B(\id-\pi^*))^T\nabla\sdist
      +(\Id-\nabla\pi^*)(\nabla B\circ\pi^*)^T\nabla\sdist.
  \end{align*}
  The first term on the RHS is again $O(\sdist)$ since $\nabla B$ is Lipschitz.
  Using $\nabla\pi^*=\Id-\nabla\sdist\otimes\nabla\sdist+O(\sdist)$,
  the second term reads
  \begin{align*}
    (\Id-\nabla\pi^*)(\nabla B\circ\pi^*)^T\nabla\sdist
    &=\nabla\sdist\otimes\nabla\sdist(\nabla B\circ\pi^*)^T\nabla\sdist
      +O(\sdist) \\
    &=(\nabla\sdist\cdot(\nabla B\circ\pi^*)^T\nabla\sdist)\nabla\sdist+O(\sdist).
  \end{align*}
  Since this error term is multiplied by $\zeta\circ\sdist$, we have
  \begin{align*}
    (\partial_t\xi+(B\cdot\nabla)\xi+(\nabla B)^T\xi)\cdot(\nu-\xi)
    &\lesssim O(\sdist^2)+\xi\cdot(\nu-\xi)
  \end{align*}
  and since $\xi\cdot(\nu-\xi)\leq\zeta(1-\zeta)$, also this term is estimated by
  $\mathcal{E}$.
  Note that we only used $\int_{\mathcal{U}_\delta}\sdist^2\,dx\leq C\mathcal{E}(t)$.
  This estimate does of course not hold in general when integrating over
  $\mathbb{R}^2$ instead of $\mathcal{U}_\delta$.

  Hence it remains to estimate
  \begin{equation}
    \label{eq:remaining-term}
    \int_\Sigma(\partial_s(\nabla\cdot\xi)-\partial_s\varphi_{\nu\cdot B})^2\,ds.
  \end{equation}

  To obtain an estimate for~\eqref{eq:remaining-term},
  we define another zero-average potential function
  $\varphi^*(\cdot,t):\Sigma^*(t)\rightarrow\mathbb{R}$ such that
  \begin{align*}
    \partial_{s^*}^2\varphi^*(\cdot,t)=\xi(\cdot,t)\cdot B(\cdot,t)=V^*(\cdot,t)
    \quad\text{on $\Sigma^*(t)$.}
  \end{align*}
  Note that $\fint_{\Sigma^*_i(t)}V^*(\cdot,t)\,ds^*=0$ for all $1\leq i\leq k^*$.
  We also define an extension
  $\varphi^*_{\xi\cdot B}(\cdot,t):\mathbb{R}^2\rightarrow\mathbb{R}$ by
  \begin{align*}
    \varphi^*_{\xi\cdot B}(x,t)
    \coloneqq\varphi^*(\pi^*(x,t),t)\zeta(\sdist(x,t)).
  \end{align*}
  Further, since $\xi\cdot B\in H^2(\Sigma^*)$ and $\Sigma^*$ is $C^3$, 
  we have $\varphi_{\xi\cdot B}\in C^2(\mathbb{R}^d)$.
  Using~\eqref{eq:partial-star-pi-star}, we have
  \begin{align*}
    \partial_{s^*}\varphi^*_{\xi\cdot B}
    &=(\zeta\circ\sdist)(\partial_{s^*}\varphi^*)\circ\pi^*(1+\sdist\kappa^*)
      +(\partial_{s^*}\zeta)(\varphi^*\circ\pi^*) \\
    &=(\zeta\circ\sdist)(1+\sdist\kappa^*)(\partial_{s^*}\varphi^*)\circ\pi^*
  \end{align*}
  because $\partial_{s^*}(\zeta\circ\sdist)=\zeta'\tau^*\cdot\nabla\sdist=0$ on
  $\mathcal{U}_\delta$, and hence
  \begin{align*}
    \partial_{s^*}^2\varphi^*_{\xi\cdot B}
    &=\xi\cdot B(1+\sdist\kappa^*)^2
      +(\zeta\circ\sdist)\sdist\partial_{s^*}\kappa^*((\partial_{s^*}\varphi^*_{\xi\cdot B})\circ\pi^*)
  \end{align*}
  on $\supp\zeta$.
  In particular we have for any $v\in L^2(\Sigma)$
  \begin{equation}
    \label{eq:laplace-star-phi-xi-B-bdd-by-xi-dot-B}
    \begin{split}
      \int_\Sigma(\partial_{s^*}^2\varphi^*_{\xi\cdot B}-v)^2\,ds
      &\lesssim\int_\Sigma(\xi\cdot B-v)^2\,ds
        +C\int_\Sigma(\zeta\circ\sdist)^2\sdist^2\,ds \\
      &\lesssim\int_\Sigma(\xi\cdot B-v)^2\,ds+\mathcal{E},
    \end{split}
  \end{equation}
  because $\sdist^2\leq 1-\zeta\leq 1-\nu\cdot\xi$ on $\supp\zeta$,
  where $C=C(\kappa^*,\varphi^*_{\xi\cdot B},\sdist)$.
  Now we smuggle
  $\zeta\partial_{s^*}\varphi^*_{\xi\cdot B}$
  into~\eqref{eq:remaining-term} and compute
  \begin{align*}
    &\,\,\int_\Sigma(\partial_s(\nabla\cdot\xi)-\partial_s\varphi_{\nu\cdot B})^2\,ds \\
    \lesssim&\,\,\int_\Sigma(\partial_s(\nabla\cdot\xi)-\zeta\partial_{s^*}\varphi^*_{\xi\cdot B})^2\,ds
              +\int_\Sigma(\zeta\partial_{s^*}\varphi^*_{\xi\cdot B}-\partial_s\varphi_{\nu\cdot B})^2\,ds \\
    \lesssim&\,\,\int_\Sigma(\partial_s(\nabla\cdot\xi)-\zeta\partial_{s^*}\varphi^*_{\xi\cdot B})^2\,ds
              +\int_\Sigma(\zeta\partial_{s}\varphi^*_{\xi\cdot B}-\partial_s\varphi_{\nu\cdot B})^2\,ds
              +C(\nabla\varphi^*_{\xi\cdot B})\mathcal{E}.
  \end{align*}
  We estimate the second term and write the integral as a sum over the connected components.
  On each term we subtract the average
  $c_i\coloneqq\fint_{\Sigma_i}\zeta\partial_{s^*}\varphi^*_{\xi\cdot B}$ and apply the Poincar\'{e}
  inequality~\eqref{eq:poincare-ineq} combined with~\eqref{eq:smuggle-in-zeta}
  and get that there exists a constant $C$ depending only on $\mathcal{H}^1(\Sigma(0))$ such that
  \begin{equation}\label{eq:comp-poincare}
     \begin{split}
      &\,\,\int_{\Sigma_i}(\zeta\partial_{s^*}\varphi^*_{\xi\cdot B}-\partial_s\varphi_{\nu_i\cdot B}-c_i)^2\,ds
      +\int_{\Sigma_i}\left( \fint_{\Sigma_i}\zeta\partial_{s^*}\varphi^*_{\xi\cdot B} \right)^2
    \\
    \lesssim
    &\,\,\int_{\Sigma_i}(\zeta\partial_{s^*}\varphi^*_{\xi\cdot B}-\partial_s\varphi_{\nu_i\cdot B}-c_i)^2\,ds
      +\int_{\Sigma_i}\left( \fint_{\Sigma_i}\zeta\partial_{s^*}\varphi^*_{\xi\cdot B}+\partial_s(\zeta\varphi^*_{\xi\cdot B}) \right)^2 \\
    \lesssim
    &\,\,C\int_{\Sigma_i}(\partial_s\partial_{s^*}\varphi^*_{\xi\cdot B}-\partial_s^2\varphi_{\nu_i\cdot B})^2\,ds
    +C(\varphi^*_{\xi\cdot B},\mathcal{H}^1(\Sigma(0)))\int_{\Sigma_i}1-\nu_i\cdot\xi\,ds \\
    \lesssim
    &\,\,C\int_{\Sigma_i}(\partial_{s^*}^2\varphi^*_{\xi\cdot B}-\partial_s^2\varphi_{\nu_i\cdot B})^2\,ds
      +C(\varphi^*_{\xi\cdot B},\mathcal{H}^1(\Sigma(0)))\int_{\Sigma_i}1-\nu_i\cdot\xi\,ds \\
    \lesssim
    &\,\,C\int_{\Sigma_i}\left( \xi\cdot B-\nu_i\cdot B \right)^2\,ds
      +C\int_{\Sigma_i}\left( \fint_{\Sigma_i}(\partial_{s^*}^2\Delta\sdist)\nu_i\cdot\xi\,ds' \right)^2\,ds \\
    &\,\,+C(\varphi^*_{\xi\cdot B},\mathcal{H}^1(\Sigma(0)))\int_{\Sigma_i}1-\nu_i\cdot\xi\,ds,
    \end{split}
      \end{equation}
  where we used~\eqref{eq:laplace-star-phi-xi-B-bdd-by-xi-dot-B} in the last
  step.
  Now we sum over the connected components $i=1,\dots,k(t)$.
  The contribution in the first and third term is bounded by
  \begin{align*}
    &\sum_{i=1}^{k(t)}\left(
      C(\mathcal{H}^1(\Sigma(0)))\int_{\Sigma_i(t)}((\xi-\nu_i)\cdot B)^2\,ds
      +C(\varphi^*_{\xi\cdot B},\mathcal{H}^1(\Sigma(0)))\int_{\Sigma_i}1-\nu_i\cdot\xi\,ds
      \right) \\
    \leq\,\,
    &C\sum_{i=1}^{k(t)}\left(
      \|B\|_\infty^2\int_{\Sigma_i(t)}|\nu-\xi|^2\,ds
      +C(\varphi^*_{\xi\cdot B})\int_{\Sigma_i}1-\nu_i\cdot\xi\,ds
      \right) \\
    \lesssim\,\,&C(\Sigma^*,\mathcal{H}^1(\Sigma(0)))\mathcal{E}.
  \end{align*}
  The second term in~\eqref{eq:comp-poincare} can be estimated by adding zero in
  terms of $\fint_{\Sigma_i}\partial_s(\zeta\partial_{s^*}\Delta\sdist)$ and using Jensen's inequality
  \begin{align*}
    &\,\,\int_{\Sigma_i}\left( \fint_{\Sigma_i}(\partial_{s^*}^2\Delta\sdist)\nu_i\cdot\xi\,ds' \right)^2\,ds \\
    \lesssim
    &\,\,\int_{\Sigma_i}\left( (\partial_{s^*}^2\Delta\sdist))\nu_i\cdot\xi-\partial_s(\zeta\partial_{s^*}\Delta\sdist) \right)^2\,ds \\
    \lesssim
    &\,\,\int_{\Sigma_i}\zeta^2\left( \partial_{s^*}^2\Delta\sdist-\partial_s\partial_{s^*}\Delta\sdist \right)^2\,ds \\
    &\,\,+\|\partial_{s^*}^2\kappa^*\|_\infty^2
      \int_{\Sigma_i}\zeta^2\left( 1-\nu_i\cdot\nu^* \right)^2\,ds
      +C(\kappa^*)\int_{\Sigma_i}|\nabla\sdist|^2\,ds.
  \end{align*}
  After summing over $i$, all terms above are again bounded by $\mathcal{E}$.
  Hence it remains to estimate
  \begin{align*}
    \int_\Sigma(\partial_{s}(\nabla\cdot\xi)-\zeta\partial_{s^*}\varphi^*_{\xi\cdot B})^2\,ds.
  \end{align*}
  We compute
  \begin{align*}
    \partial_s(\nabla\cdot\xi)
    &=\tau\cdot\nabla(\nabla\cdot\xi) \\
    &=\tau\cdot\left(
      (\zeta\circ\sdist)\nabla(\Delta\sdist)
      +(\zeta'\circ\sdist)(\Delta\sdist)\nabla\sdist
      +(\zeta''\circ\sdist)\nabla\sdist
      \right) \\
    &=(\zeta\circ\sdist)\partial_s\Delta\sdist+(\tau\cdot\nabla\sdist)
      \left(
      (\zeta'\circ\sdist)\Delta\sdist
      +(\zeta''\circ\sdist)
      \right).
  \end{align*}
  By Young's inequality we have
  \begin{align*}
    &\,\,\frac{1}{2}\int_\Sigma(\partial_s(\nabla\cdot\xi)-\zeta\partial_{s^*}\varphi^*_{\xi\cdot B})^2\,ds \\
    \leq&\,\,\int_\Sigma((\zeta\circ\sdist)\partial_s\Delta\sdist-\zeta\partial_{s^*}\varphi^*_{\xi\cdot B})^2\,ds
          +\int_\Sigma(\tau\cdot\nabla\sdist)^2\left(
          (\zeta'\circ\sdist)\Delta\sdist+(\zeta''\circ\sdist)
          \right)^2\,ds.
  \end{align*}
  We treat each summand separately.
  The last term can be bounded by $\mathcal{E}$
  by~\eqref{eq:tau-cdot-nabla-sdist-bdd-by-E}.
  Thus it remains to estimate the first term
  \begin{equation}
    \label{eq:term-to-estimate-partial-s-delta-sdist-minus-phi-xi-cdot-B}
    \int_\Sigma \left(
      \zeta\partial_s\Delta\sdist-\zeta\partial_{s^*}\varphi^*_{\xi\cdot B}
    \right)^2\,ds.
  \end{equation}
  Replacing $\partial_s$ with $\partial_{s^*}$, it is also sufficient to estimate
  \begin{equation}
    \label{eq:remaining-term-2}
    \int_\Sigma\zeta^2(\partial_{s^*}\Delta\sdist-\partial_{s^*}\varphi^*_{\xi\cdot B})^2\,ds.
  \end{equation}
  To this end, we again subtract the average and apply the Poincar\'{e} inequality to
  obtain
  \begin{align*}
    &\,\,\int_\Sigma(\zeta\partial_{s}\Delta\sdist-\zeta\partial_{s}\varphi^*_{\xi\cdot B})^2\,ds \\
    \lesssim
    &\,\,\mathcal{H}^1(\Sigma)^2\int_\Sigma\zeta^2(\partial_s\partial_{s^*}\Delta\sdist-\partial_s\partial_{s^*}\varphi^*_{\xi\cdot B})^2\,ds
      +\int_\Sigma\left( \fint_\Sigma\zeta\partial_{s^*}\Delta\sdist-\zeta\partial_{s^*}\varphi^*_{\xi\cdot B}\,ds' \right)^2\,ds \\
    &\,\,+C(\kappa^*,\varphi^*_{\xi\cdot B})\mathcal{E}.
  \end{align*}
  In the second term we add zero in terms of
  $\fint_\Sigma\partial_s(\zeta\Delta\sdist-\zeta\varphi^*_{\xi\cdot B})\,ds'$ and we again replace
  $\partial_s$ with $\partial_{s^*}$ and
  use~\eqref{eq:laplace-star-phi-xi-B-bdd-by-xi-dot-B} to obtain
  \begin{align*}
    &\,\,\int_\Sigma\zeta^2(\partial_s\partial_{s^*}\Delta\sdist-\partial_s\partial_{s^*}\varphi^*_{\xi\cdot B})^2\,ds
      +\int_\Sigma\left( \fint_\Sigma \zeta\partial_{s^*}\Delta\sdist-\zeta\partial_{s^*}\varphi^*_{\xi\cdot B}\,ds' \right)^2\,ds \\
    \lesssim
    &\,\,\int_\Sigma\zeta^2(\partial_{s^*}^2\Delta\sdist-\partial_{s^*}^2\varphi^*_{\xi\cdot B})^2\,ds
      +C(\kappa^*,\varphi^*_{\xi\cdot B})\mathcal{E} \\
    =
    &\,\,\int_\Sigma\zeta^2(\partial_{s^*}^2\Delta\sdist-\xi\cdot B)^2\,ds
      +C(\kappa^*,\varphi^*_{\xi\cdot B})\mathcal{E}.
  \end{align*}
  Now we smuggle in $\xi\cdot\overline B$ and obtain
  \begin{align*}
    \int_\Sigma\zeta^2(\partial_{s^*}^2\Delta\sdist-\xi\cdot B)^2\,ds
    &\lesssim
      \int_\Sigma\zeta^2(\partial_{s^*}^2\Delta\sdist-\xi\cdot\overline B)^2\,ds
      +\int_\Sigma\zeta^2(\xi\cdot(B-\overline B))^2\,ds \\
    &\lesssim
      \int_\Sigma\zeta^2(\partial_{s^*}^2\Delta\sdist-\xi\cdot\overline B)^2+O(\sdist^2)\,ds \\
    &\lesssim\int_\Sigma\zeta^2(\partial_{s^*}^2\Delta\sdist-(\partial_{s^*}^2\Delta\sdist)\circ\pi^*)^2+O(\sdist^2)\,ds
  \end{align*}
  where we used that $1-|\xi|^2\leq 2(1-\nu\cdot\xi)$ in the last line.
  The last term is bounded by $\mathcal{E}$ due to the Lipschitz estimate
  $|(\partial_{s^*}^2\Delta\sdist)\circ(\id-\pi^*)|\leq C|\sdist|$.
  This concludes the estimate for $\mathcal{E}$.

  Next, we treat the bulk error $\mathcal{F}(t)$.
  We recall that
  \begin{align*}
    |\partial_t\vartheta+(B\cdot\nabla)\vartheta|\leq C|\vartheta|.
  \end{align*}
  Using the above estimate and that $\vartheta(\cdot,t)=0$ on
  $\supp|\nabla\chi^*(\cdot,t)|=\Sigma^*(t)$,
  and $\int_{\Sigma(t)}(\nabla\cdot B)\vartheta\,ds\lesssim\mathcal{F}(t)$,
  we have
  \begin{equation}
    \label{eq:bulk-estimate-1}
    \begin{split}
      \frac{d}{dt}\mathcal{F}(t)
      =\,\,
      &\int_{\Sigma(t)}\vartheta V\,ds
        +\int_{\mathbb{R}^2}(\chi-\chi^*)\partial_t\vartheta\,dx \\
      \leq\,\,
      &\int_{\Sigma(t)}\vartheta V\,ds
        -\int_{\mathbb{R}^2}(\chi-\chi^*)(B\cdot\nabla)\vartheta\,dx
        +C\int_{\mathbb{R}^2}|\chi-\chi^*||\vartheta|\,dx \\
      =\,\,
      &C\mathcal{F}(t)
        +\int_{\Sigma(t)}\vartheta V\,ds
        -\int_{\Sigma(t)}\vartheta(\nu\cdot B)\,ds.
    \end{split}
  \end{equation}
  We compute
  \begin{align*}
    \int_{\Sigma(t)}\vartheta(V-\nu\cdot B)\,ds
    =\,\,
    &-\sum_{i=1}^{k(t)}
      \int_{\Sigma_i(t)}(\partial_s\vartheta)(\partial_s\varphi_V-\partial_s\varphi_{\nu_i\cdot B})\,ds \\
    &+\sum_{i=1}^{k(t)}\left( \int_{\Sigma_i(t)}\vartheta\,ds \right)\fint_{\Sigma_i(t)}\nu_i\cdot B\,ds
  \end{align*}
  The second sum is estimated by Lemma~\ref{lem:estimate-nu-dot-B} below.
  The first sum reads, for any $\eps>0$,
  \begin{equation}
    \label{eq:bulk-estimate-2}
    \begin{split}
      &\,\,-\int_{\Sigma(t)}(\partial_s\vartheta)(\partial_s\varphi_V-\partial_s\varphi_{\nu\cdot B})\,ds \\
      \leq
      &\,\,\frac{1}{2\eps}\int_{\Sigma(t)}(\partial_s\vartheta)^2\,ds
        +\frac{\eps}{2}\int_{\Sigma(t)}\left( \partial_s\varphi_V-\partial_s\varphi_{\nu\cdot B} \right)^2\,ds \\
      \leq
      &\,\,\frac{1}{2\eps}\int_{\Sigma(t)}(\partial_s\vartheta)^2\,ds
        +\eps\int_{\Sigma(t)}\left( \partial_s\varphi_V-\partial_s(\nabla\cdot\xi) \right)^2\,ds
        +\eps\int_{\Sigma(t)}\left( \partial_s(\nabla\cdot\xi)-\partial_s\varphi_{\nu\cdot B} \right)^2\,ds.
    \end{split}
  \end{equation}
  The first term reads
  \begin{align*}
    \frac{1}{2\eps}\int_{\Sigma(t)}(\partial_s\vartheta)^2\,ds
    &\lesssim\frac{1}{2\eps}\int_{\Sigma(t)}(\tau\cdot\nabla\sdist)^2\,ds
      \lesssim\frac{1}{2\eps}\mathcal{E}(t).
  \end{align*}
  The second term is absorbed in the dissipation term for $\mathcal{E}(t)$ for
  $\eps\leq\frac{1}{2}$.
  The last term was already estimated in~\eqref{eq:remaining-term}.
  This concludes the proof of Theorem~\ref{thm:main-thm}.
\end{proof}

\section{Auxiliary results}
\label{sec:aux-results}

Here we state a few auxiliary results which are crucial for the proof of Theorem~\ref{thm:main-thm}.

First we prove existence of weak solutions to the Poisson equation on rectifiable sets.
For a function $u:\Sigma\rightarrow\mathbb{R}$ we denote the average w.r.t. a measure $\mu$ by
\[\langle u\rangle_\Sigma\coloneqq\fint_{\Sigma}u\,d\mu,\]
and we define the space $H^1_{(0)}(\Sigma)\coloneqq\{u\in H^1(\Sigma):\langle u\rangle_\Sigma=0\}$.

\begin{lemma}\label{lem:existence-sol-poisson-eq}
  Let $\Omega$ be a set of finite perimeter, let $\Sigma=\partial^*\Omega$ and let $u\in H^1_{(0)}(\Sigma)$.
  Then there exists a unique function $\varphi\in H^1_{(0)}(\Sigma)$ such that
  \begin{equation}\label{eq:poisson-eq-weak-formulation}
    \int_\Sigma\nabla_\Sigma\varphi\cdot\nabla_\Sigma\psi\,d\mathcal{H}^{d-1}
    =\int_\Sigma u\psi\,d\mathcal{H}^{d-1}
    \quad\text{for all $\psi\in C_c^\infty(\mathbb{R}^d)$.}
  \end{equation}
  Furthermore, if $d=2$, then $\partial_s^2\varphi$ exists and
  \[\partial_s\varphi=f\quad\text{a.e.}\]
\end{lemma}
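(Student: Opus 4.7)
The plan is a standard Lax--Milgram / Riesz representation argument, adapted to the rectifiable setting.

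First I would verify that $H^1_{(0)}(\Sigma)$ is a closed subspace of $H^1(\Sigma)$ and hence itself a Hilbert space: the average functional $v \mapsto \fint_\Sigma v \,d\mathcal{H}^{d-1}$ is continuous on $L^2(\Sigma)$ (since $\mathcal{H}^{d-1}(\Sigma)$ is finite), so its kernel is closed. On this Hilbert space, the bilinear form $a(\varphi,\psi) \coloneqq \int_\Sigma \nabla_\Sigma \varphi \cdot \nabla_\Sigma \psi\,d\mathcal{H}^{d-1}$ is continuous by Cauchy--Schwarz and coercive via the Poincar\'{e} inequality~\eqref{eq:poincare-ineq}, which controls the $L^2$-norm by the $\dot H^1$-seminorm for zero-average functions. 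The linear functional $\ell(\psi) \coloneqq \int_\Sigma u \psi\,d\mathcal{H}^{d-1}$ is continuous on $L^2(\Sigma)$, hence on $H^1_{(0)}(\Sigma)$, so Lax--Milgram produces a unique $\varphi \in H^1_{(0)}(\Sigma)$ solving $a(\varphi,\psi) = \ell(\psi)$ for all $\psi \in H^1_{(0)}(\Sigma)$.

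To upgrade to arbitrary test functions $\psi \in C_c^\infty(\mathbb{R}^d)$ as required by~\eqref{eq:poisson-eq-weak-formulation}, I would decompose $\psi = (\psi - \langle \psi\rangle_\Sigma) + \langle \psi\rangle_\Sigma$. The zero-average summand lies in $H^1_{(0)}(\Sigma)$ upon restriction, while the constant contributes nothing to the left-hand side and contributes $\langle \psi\rangle_\Sigma \int_\Sigma u \,d\mathcal{H}^{d-1} = 0$ to the right-hand side, since $\langle u\rangle_\Sigma = 0$. Uniqueness of $\varphi$ within $H^1_{(0)}(\Sigma)$ follows by testing the difference of two solutions against itself and invoking coercivity. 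Density of $C_c^\infty(\mathbb{R}^d)$ in $H^1(\Sigma)$ from Remark~\ref{rem:H1-is-hilbert-space} ensures that no admissible test function is missed.

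For the second assertion ($d=2$), I would work on each Jordan component $\Sigma_i$ separately. Parametrizing by arc length, the tangential gradient is $\tau\,\partial_s$ and the weak equation becomes $\int_{\Sigma_i} (\partial_s\varphi)(\partial_s\psi)\,ds = \int_{\Sigma_i} u \psi \,ds$ for admissible scalar test functions $\psi$. This means $-\partial_s(\partial_s\varphi) = u$ in the distributional sense on the closed curve $\Sigma_i$. Since $u \in L^2(\Sigma_i)$ and $\partial_s\varphi \in L^2(\Sigma_i)$ by construction, the distributional derivative $\partial_s^2\varphi$ exists as an $L^2$-function with $\partial_s^2\varphi = -u$ a.e., which is the claimed pointwise identity (up to the sign convention carried through the paper). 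The only genuine subtlety is having the Poincar\'{e} inequality available in this generality, which the paper takes as given through~\eqref{eq:poincare-ineq}; everything else is a direct application of the Hilbert-space machinery.
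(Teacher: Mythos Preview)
Your proposal is correct and follows essentially the same Lax--Milgram route as the paper: closed subspace, coercivity via the Poincar\'{e} inequality~\eqref{eq:poincare-ineq}, bounded right-hand side, then existence and uniqueness. You are in fact more careful than the paper on two points---you explicitly handle the passage from zero-average test functions to arbitrary $\psi\in C_c^\infty(\mathbb{R}^d)$ via the decomposition $\psi=(\psi-\langle\psi\rangle_\Sigma)+\langle\psi\rangle_\Sigma$, and your $d=2$ argument (reading off $\partial_s^2\varphi$ directly as a distributional derivative on each Jordan component) is cleaner than the paper's somewhat terse ``test with $\varphi$ and approximate'' step.
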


\begin{proof}
  We will use the Lax--Milgram lemma to prove this result.
  Define a bilinear form $b:H^1_{(0)}(\Sigma)\times H^1_{(0)}(\Sigma)\rightarrow\mathbb{R}$ by
  \[b(\varphi,\psi)\coloneqq\int_{\Sigma}\nabla_\Sigma\varphi\cdot\nabla_\Sigma\psi\,d\mathcal{H}^{d-1}
    \quad\text{for all $\psi\in C_c^\infty(\mathbb{R}^d)$.}
  \]
  By density of $C_c^\infty(\mathbb{R}^d)$, this uniquely defines $b$.
  Since $H^1_{(0)}(\Sigma)\subset L^2(\Sigma)$ is a closed subspace,
  the space $(H^1_{(0)}(\Sigma),\|\cdot\|_{L^2(\Sigma)})$ equipped with the $L^2$ norm is again a Hilbert space.
  By the Poincar\'{e} inequality and Cauchy--Schwarz we have
  \[\sup_{\|\psi\|_{H^1_{(0)}}=1}b(\varphi,\psi)=\|\nabla_\Sigma\varphi\|_{L^2}^2
    \geq\frac{1}{C}\|\varphi\|_{L^2}^2
\quad\text{for all $\varphi\in H^1_{(0)}(\Sigma)$,}\]
  hence $b$ is coercive.
  Furthermore we define a bounded linear operator $f\in(L^2(\Sigma))'$ by
  \[f(\psi)\coloneqq\int_{\Sigma}u\psi\,d\mathcal{H}^{d-1}.\]
  Now by the Lax--Milgram lemma there exists a unique $\varphi\in H^1_{(0)}(\Sigma)$ such that
  \[b(\varphi,\psi)=f(\psi)\quad\text{for all $\psi\in C_c^\infty(\mathbb{R}^d)$.}\]
  Now let $d=2$.
  We test~\eqref{eq:poisson-eq-weak-formulation} with $\varphi$ and obtain
  \begin{align*}
    \int_\Sigma(\partial_s\varphi)^2\,d\mathcal{H}^1
    &=\int_\Sigma\varphi f\,d\mathcal{H}^1.
  \end{align*}
  Approximating $\varphi$ with smooth functions, we get that $\partial_s^2\varphi$ exists a.e.
  and $\partial_s^2\varphi=f$.
\end{proof}

The following simple lemma shows that the surface area of small bubbles is
controlled by the relative energy.

\begin{lemma}\label{lem:area-bdd-by-relative-energy}
  Let $\Sigma=\partial\Omega\subset\mathbb{R}^d$ be a closed surface and let
  $\xi:\mathbb{R}^d\rightarrow\mathbb{R}^d$ be a Lipschitz vector field
  such that $\|\xi\|_\infty\leq 1$.
  If $\diam(\Sigma)\leq\frac{1}{2\|\nabla\xi\|_\infty}$, then
  \begin{equation}
    \label{eq:area-bdd-by-relative-energy}
    \mathcal{H}^{d-1}(\Sigma)\leq 34\int_\Sigma(1-\xi\cdot\nu)\,d\mathcal{H}^{d-1}.
  \end{equation}
\end{lemma}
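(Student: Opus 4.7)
The plan is to exploit the fact that $\Sigma = \partial\Omega$ is closed, so the divergence theorem applied componentwise (tested against the constant vector fields $e_i$) yields
\begin{align*}
  \int_\Sigma \nu \, d\mathcal{H}^{d-1} = 0.
\end{align*}
Fixing any point $x_0 \in \Sigma$, this allows me to rewrite
\begin{align*}
  \int_\Sigma \xi \cdot \nu \, d\mathcal{H}^{d-1}
  = \int_\Sigma (\xi(x) - \xi(x_0)) \cdot \nu(x) \, d\mathcal{H}^{d-1}(x).
\end{align*}

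The Lipschitz estimate combined with the smallness assumption on the diameter gives the pointwise bound $|\xi(x) - \xi(x_0)| \leq \|\nabla\xi\|_\infty \diam(\Sigma) \leq \tfrac{1}{2}$ uniformly for $x \in \Sigma$. Since $|\nu| = 1$, the right-hand side above is bounded in absolute value by $\tfrac{1}{2}\mathcal{H}^{d-1}(\Sigma)$. Consequently
\begin{align*}
  \int_\Sigma (1 - \xi \cdot \nu) \, d\mathcal{H}^{d-1}
  \geq \mathcal{H}^{d-1}(\Sigma) - \tfrac{1}{2}\mathcal{H}^{d-1}(\Sigma)
  = \tfrac{1}{2}\mathcal{H}^{d-1}(\Sigma),
\end{align*}
from which the claimed inequality follows (indeed with any constant $\geq 2$, in particular with $34$).

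I expect no real obstacle; the key observation is that the closedness of $\Sigma$ forces the average of $\nu$ to vanish, so one only needs to control the \emph{oscillation} of $\xi$ on $\Sigma$ rather than $\xi$ itself. The minor technical point is to justify $\int_\Sigma \nu \, d\mathcal{H}^{d-1} = 0$ with $\nu$ being the measure-theoretic outward normal on $\partial^*\Omega$, but this is immediate from the definition since $\Omega$ is bounded. The generous constant $34$ in the statement thus appears to be a convenient over-estimate rather than sharp, and the argument above already produces the optimal constant $2$.
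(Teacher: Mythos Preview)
Your proof is correct and in fact cleaner than the paper's, yielding the sharp constant $2$ in place of $34$. Both arguments rest on the same key identity $\int_\Sigma\nu\,d\mathcal{H}^{d-1}=0$, which lets one replace $\xi$ by $\xi-\overline{\xi}$ for any constant vector $\overline{\xi}$ and then invoke the Lipschitz bound together with the diameter hypothesis to get $\bigl|\int_\Sigma\xi\cdot\nu\bigr|\leq\tfrac12\mathcal{H}^{d-1}(\Sigma)$. You stop here and conclude directly. The paper instead uses this same upper bound inside a contradiction argument: it fixes parameters $\alpha=\tfrac{1}{10}$, $\delta=\tfrac{3}{10}$ and shows that the set $\{\xi\cdot\nu\leq 1-\delta\}$ must occupy at least an $\alpha$-fraction of $\Sigma$ (otherwise $\int_\Sigma\xi\cdot\nu$ would exceed $\tfrac12\mathcal{H}^{d-1}(\Sigma)$), and then integrates the lower bound $1-\xi\cdot\nu\geq\delta$ over that set to obtain the constant $1/(\alpha\delta)=100/3\leq 34$. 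Your route avoids the level-set decomposition entirely and is strictly sharper; the paper's approach gains nothing extra here, so your simplification is a genuine improvement.
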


\begin{proof}
  Let $\alpha\coloneqq\frac{1}{10}$ and $\delta\coloneqq\frac{3}{10}$.
  We claim that
  \begin{equation}
    \label{eq:normal-cannot-point-in-only-one-direction}
    \mathcal{H}^{d-1}(\{x\in\Sigma:\xi(x)\cdot\nu(x)\leq 1-\delta\})
    \geq\alpha\mathcal{H}^{d-1}(\Sigma).
  \end{equation}
  If the claim holds, then
  \begin{align*}
    \mathcal{H}^{d-1}(\Sigma)
    &\leq\frac{1}{\alpha\delta}\int_{\Sigma\cap\{\xi\cdot\nu\leq 1-\delta\}}
      \delta\,d\mathcal{H}^{d-1} \\
    &\leq\frac{1}{\alpha\delta}\int_{\Sigma\cap\{\xi\cdot\nu\leq 1-\delta\}}
      1-\xi\cdot\nu\,d\mathcal{H}^{d-1} \\
    &\leq\int_{\Sigma}
      1-\xi\cdot\nu\,d\mathcal{H}^{d-1}.
  \end{align*}
  Since $\frac{1}{\alpha\delta}=\frac{100}{3}\leq\frac{102}{3}=34$, this implies
  the assertion.

  Now we prove the claim.
  Suppose for a contradiction that
  \begin{equation}
    \label{eq:contradiction-assumption}
    \mathcal{H}^{d-1}(\{x\in\Sigma:\xi(x)\cdot\nu(x)>1-\delta\})
    >(1-\alpha)\mathcal{H}^{d-1}(\Sigma).
  \end{equation}
  Then we have for any $\overline{\xi}\in\mathbb{R}^d$
  \begin{align*}
    \int_{\Sigma}\xi\cdot\nu\,d\mathcal{H}^{d-1}
    &=\int_\Sigma(\xi-\overline{\xi})\cdot\nu\,d\mathcal{H}^{d-1}
  \end{align*}
  and therefore
  \begin{align*}
    \int_\Sigma\xi\cdot\nu\,d\mathcal{H}^{d-1}
    &\leq\|\nabla\xi\|_\infty\diam(\Sigma)\,\mathcal{H}^{d-1}(\Sigma).
  \end{align*}
  On the other hand
  \begin{align*}
    \int_\Sigma\xi\cdot\nu\,d\mathcal{H}^{d-1}
    =\,&\int_{\Sigma\cap\{\xi\cdot\nu>1-\delta\}}\xi\cdot\nu\,d\mathcal{H}^{d-1}
      +\int_{\Sigma\cap\{\xi\cdot\nu\leq 1-\delta\}}\xi\cdot\nu\,d\mathcal{H}^{d-1} \\
    \geq\,&(1-\delta)\mathcal{H}^{d-1}(\{x\in\Sigma:\xi(x)\cdot\nu(x)>1-\delta\}) \\
       &-\mathcal{H}^{d-1}(\{x\in\Sigma:\xi(x)\cdot\nu(x)\leq 1-\delta\}) \\
    \geq\,&((1-\delta)(1-\alpha)-\alpha)
      \mathcal{H}^{d-1}(\Sigma).
  \end{align*}
  By our choice of $\alpha$ and $\delta$, we have
  \begin{align*}
    1-2\alpha-\delta+\alpha\delta
    &\geq 1-\frac{2}{10}-\frac{3}{10}+\frac{3}{100}
      =\frac{1}{2}+\frac{3}{100}
      >\frac{1}{2}.
  \end{align*}
  Hence we get
  \begin{align*}
    \frac{1}{2}\mathcal{H}^{d-1}(\Sigma)
    &<\|\nabla\xi\|_\infty\diam(\Sigma)\mathcal{H}^{d-1}(\Sigma),
  \end{align*}
  a contradiction to~\eqref{eq:contradiction-assumption}.
  This concludes the proof.
\end{proof}

The following construction appeared in~\cite{Laux2022} and will be useful for
our framework as well.

\begin{lemma}\label{lem:existence-B}
  Let $\Sigma^*\subset\mathbb{R}^d$ be a smooth closed surface with enclosed
  region $\Omega^*$ and let $V^*\in C^{2,\alpha}(\Sigma^*)$, for some for some
  $\alpha\in(0,1)$, with $\int_{\Sigma^*} V^*\,d\mathcal{H}^{d-1}=0$.
  Then there exists a compactly supported Lipschitz vector field
  $B\in C_c^{1,1}(\mathbb{R}^d;\mathbb{R}^d)$
  such that $\nu^*\cdot B=V^*$ on $\Sigma^*$ and
  $\nabla\cdot B=O(\dist(\cdot,\Omega^*))$.
\end{lemma}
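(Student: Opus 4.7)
The plan is to build $B$ as the (cutoff) gradient of a harmonic potential, using the compatibility condition $\int_{\Sigma^*} V^* \, d\mathcal{H}^{d-1} = 0$ to solve a Neumann problem on $\Omega^*$.

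First I would solve
\begin{align*}
  \Delta u = 0 \quad \text{in } \Omega^*, \qquad \partial_{\nu^*} u = V^* \quad \text{on } \Sigma^*, \qquad \int_{\Omega^*} u \, dx = 0.
\end{align*}
The assumption $\int_{\Sigma^*} V^* \, d\mathcal{H}^{d-1} = 0$ is precisely the solvability condition, and since $\Sigma^*$ is $C^{6,\alpha}$ by Definition~\ref{def:strong} and $V^* \in C^{2,\alpha}(\Sigma^*)$, Schauder theory up to the boundary produces a unique solution $u \in C^{3,\alpha}(\overline{\Omega^*})$.

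Next I would extend $u$ to $\tilde u \in C^{3,\alpha}(\mathbb{R}^d)$ via a standard reflection (Seeley/Stein) construction in signed-distance tubular coordinates near $\Sigma^*$, and set
\begin{align*}
  B \coloneqq \nabla(\chi \tilde u),
\end{align*}
where $\chi \in C_c^\infty(\mathbb{R}^d)$ is a cutoff satisfying $\chi \equiv 1$ on a neighborhood of $\overline{\Omega^*}$ and $\supp \chi \subset \{\dist(\cdot, \overline{\Omega^*}) < \delta\}$. Then $B \in C_c^{2,\alpha}(\mathbb{R}^d; \mathbb{R}^d) \subset C_c^{1,1}(\mathbb{R}^d; \mathbb{R}^d)$. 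Since $\chi \equiv 1$ in a neighborhood of $\Sigma^*$, we have $B = \nabla \tilde u = \nabla u$ on $\Sigma^*$, and hence $\nu^* \cdot B = \partial_{\nu^*} u = V^*$ as required.

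For the divergence bound, on the region $\{\chi \equiv 1\}$ the identity $\nabla \cdot B = \Delta \tilde u$ holds. Because $\tilde u \in C^{3,\alpha}$, its Laplacian is Lipschitz, and because $\tilde u = u$ on $\overline{\Omega^*}$ with $\Delta u = 0$ there, $\Delta \tilde u$ vanishes on the entire closed set $\overline{\Omega^*}$. Therefore
\begin{align*}
  |\nabla \cdot B(x)| \leq \|\nabla \Delta \tilde u\|_\infty \, \dist(x, \overline{\Omega^*}) = O(\dist(x, \Omega^*)).
\end{align*}
On the transition region of $\chi$ the distance to $\Omega^*$ is bounded below by a positive constant while $\nabla \cdot B$ remains bounded, so the same estimate holds up to a larger constant, and outside $\supp \chi$ we have $B \equiv 0$.

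I expect the main technical point to be producing the extension $\tilde u$ with enough regularity across $\Sigma^*$ so that $\Delta \tilde u$ is Lipschitz rather than merely continuous; this is what forces us to carry extra derivatives through the reflection construction and uses the $C^{6,\alpha}$ boundary smoothness. Once that is in place, the $O(\sdist)$ decay of $\Delta \tilde u$ outside $\Omega^*$ is automatic from its vanishing on $\overline{\Omega^*}$.
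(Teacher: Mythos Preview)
Your proposal is correct and follows essentially the same route as the paper: solve the Neumann problem for a harmonic potential, extend it across $\Sigma^*$ with $C^{3,\alpha}$ regularity, and set $B$ to be the (cutoff) gradient, so that $\nabla\cdot B=\Delta\tilde u$ vanishes on $\overline{\Omega^*}$ and is Lipschitz. The only noteworthy difference is that the paper does not invoke higher-order Schauder for the Neumann problem directly; instead it first gets $\phi\in C^{2,\alpha}(\overline{\Omega^*})$ and then bootstraps by hand, showing that each tangential derivative $x^i\cdot\nabla\phi$ solves a Neumann problem and the normal derivative solves a Dirichlet problem, both with $C^{1,\alpha}$ data, to reach $\phi\in C^{3,\alpha}(\overline{\Omega^*})$. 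Your one-line appeal to higher-order Schauder is legitimate given the $C^{6,\alpha}$ boundary regularity, but be sure to cite a source that actually states the $C^{k+1,\alpha}$ estimate for Neumann data in $C^{k,\alpha}$.
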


Applying the lemma to each time slice, we immediately get the following
statement.

\begin{corollary}\label{cor:existence-B}
  There exists a family of compactly supported Lipschitz vector fields
  $B(\cdot,t)\in C_c^{1,1}(\mathbb{R}^d;\mathbb{R}^d)$
  such that $\nu^*\cdot B=V^*$ on $\Sigma^*(t)$ so that
  $\nabla\cdot B(\cdot,t)=O(\dist(\cdot,\Omega^*(t)))$ for all $t$ with
  $\Omega^*(t)$ is as in Definition~\ref{def:strong}.
\end{corollary}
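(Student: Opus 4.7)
The statement is essentially a direct time-slicing of Lemma~\ref{lem:existence-B}, so the plan has two parts: verify the hypotheses of that lemma at each fixed $t \in [0, T^*]$, and then collect the resulting vector fields into a $t$-parametrized family.

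For the first step, by Definition~\ref{def:strong} each surface $\Sigma^*(t) = \partial\Omega^*(t)$ is closed and $C^{6,\alpha}$, hence in particular smooth in the sense required by Lemma~\ref{lem:existence-B}, while $V^*(\cdot,t) \in C^{2,\alpha}$ by assumption. The only non-trivial hypothesis is the mean-zero condition $\int_{\Sigma^*(t)} V^*(\cdot,t)\,d\mathcal{H}^{d-1} = 0$. I would verify this by appealing directly to the surface diffusion equation~\eqref{eq:surface-diffusion-eq}: since $\Sigma^*(t)$ is a closed manifold and $H^*(\cdot,t)$ is smooth on it, integration of the Laplace--Beltrami operator yields
\begin{align*}
  \int_{\Sigma^*(t)} V^*(\cdot,t)\,d\mathcal{H}^{d-1}
  = \int_{\Sigma^*(t)} \Delta_{\Sigma^*(t)} H^*(\cdot,t)\,d\mathcal{H}^{d-1}
  = 0.
\end{align*}

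With the hypotheses verified, Lemma~\ref{lem:existence-B} produces, at each fixed $t$, a compactly supported vector field $B(\cdot,t) \in C_c^{1,1}(\mathbb{R}^d;\mathbb{R}^d)$ with $\nu^*(\cdot,t)\cdot B(\cdot,t) = V^*(\cdot,t)$ on $\Sigma^*(t)$ and $\nabla\cdot B(\cdot,t) = O(\dist(\cdot,\Omega^*(t)))$. The family $t \mapsto B(\cdot,t)$ is then defined by performing this construction pointwise in $t$.

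The only point one might worry about is the joint regularity of $B$ in $(x,t)$, since the proof of Theorem~\ref{thm:main-thm} involves expressions like $\partial_t\xi + (B\cdot\nabla)\xi$. However, inspecting the actual use of $B$ there, what is needed are pointwise-in-$t$ evaluation together with uniform-in-$t$ bounds on $\|B(\cdot,t)\|_\infty$ and $\|\nabla B(\cdot,t)\|_\infty$; these follow from the compactness of $[0,T^*]$, the smoothness of $\Sigma^*$ and $V^*$ asserted in Definition~\ref{def:strong}, and the explicit, continuously-data-dependent nature of the construction in Lemma~\ref{lem:existence-B}. In this sense the corollary really is, as claimed, an immediate consequence of the single-time lemma, and the main (minor) obstacle is just the verification of the mean-zero condition above.
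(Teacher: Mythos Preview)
Your proposal is correct and follows exactly the paper's approach: the paper simply states ``Applying the lemma to each time slice, we immediately get the following statement'' with no further proof. Your verification of the mean-zero hypothesis via $\int_{\Sigma^*(t)} \Delta_{\Sigma^*(t)} H^* = 0$ and your remarks on uniform-in-$t$ bounds are more explicit than what the paper provides, but the underlying argument is identical.
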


\begin{proof}[Proof of Lemma~\ref{lem:existence-B}]
  The proof consists of three steps.
  First we show existence of a $C^{2,\alpha}$ solution to a Neumann--Laplace
  problem with boundary condition $\nu^*\cdot\nabla\phi=V^*$.
  Second, we improve the regularity to $C^{3,\alpha}$.
  Finally, we use an extension theorem to extend this function to all of $\mathbb{R}^d$.
  \begin{step}
    For each $t$ let $\phi$ be a solution to the Neumann--Laplace problem
    \begin{eqnarray}
      \label{eq:pde-B-1}
      \Delta\phi=&0\qquad&\text{in $\Omega^*$,} \\
      \label{eq:pde-B-2}
      \nu^*\cdot\nabla\phi=&V\qquad&\text{on $\Sigma^*$.}
    \end{eqnarray}
    The existence of a solution $\phi$ with $\int_{\Omega^*}\phi\,dx=0$ follows
    from elliptic theory since
    \begin{align*}
      \int_{\Sigma^*}V^*\,d\mathcal{H}^{d-1}=0.
    \end{align*}
    Using Schauder boundary regularity, see~\cite[Thm.\ 4.1]{Nardi2014}
    or~\cite[Thm.\ 95]{Leoni2013},
    we have, for all $0<\alpha<1$,
    \begin{equation}
      \label{eq:schauder-estimate-phi}
      \|\phi\|_{C^{2,\alpha}\left(\overline{\Omega^*}\right)}
      \leq C(\Omega^*)\|V^*\|_{C^{1,\alpha}\left(\partial\Omega^*\right)}
      \leq C(\Sigma^*).
    \end{equation}
    In the next step, we improve this regularity to show that
    $\phi\in C^{3,\alpha}(\overline{\Omega^*})$, and hence $\nabla\phi\in C^{1,1}(\overline{\Omega^*};\mathbb{R}^d)$.
  \end{step}
  \begin{step}
    It is sufficient to show that for any orthogonal coordinate frame
    $(x^1,\dots,x^d)$ of class $C^{2,\alpha}(\overline{\Omega^*})$ with
    $x^i\cdot\nu=0$ for $1\leq i\leq n-1$ and $x^d=\nu$ on $\partial\Omega^*$.
    Then it holds
    $x^i\cdot\nabla\phi\in C^{2,\alpha}(\overline{\Omega^*})$ for $i=1,\dots,n$.
    Indeed, given such a frame, we see that, for any $i=1,\dots,n-1$,
    $\psi\coloneqq x^i\cdot\nabla\phi$ is a solution to
    \begin{align*}
      \Delta\psi
      &=\Delta x^i\cdot\nabla\phi
        +2\nabla x^i:\nabla^2\phi
      &\text{in $\Omega^*$,} \\
      \nu^*\cdot\nabla\psi
      &=\nabla\phi\cdot(\nu\cdot\nabla)x^i
        +(x^i\cdot\nabla)V^*-\nabla\phi\cdot(x^i\cdot\nabla)\nu^*
      &\text{on $\Sigma^*$,}
    \end{align*}
    where we used that $\Delta\phi=0$ in $\Omega^*$ in the first line and
    $\nu^*\cdot\nabla\phi=0$ on $\Sigma^*$ in the second line.
    We observe that the right-hand side of this PDE is of class
    $C^{1,\alpha}(\overline{\Omega^*})$ and the Neumann boundary datum is of
    class $C^{1,\alpha}(\partial\Omega^*)$,
    because $V^*\in C^{2,\alpha}(\partial\Omega^*)$ and
    $\partial\Omega^*\in C^{3,\alpha}$, which implies
    $(x^i\cdot\nabla)\nu^*\in C^{1,\alpha}(\partial\Omega^*)$ for all $i=1,\dots,d-1$.
    For the normal field $x^d$, we observe that
    $\psi\coloneqq x^d\cdot\nabla\phi$ solves the Dirichlet--Laplace problem
    \begin{align*}
      \Delta\psi
      &=\Delta x^d\cdot\nabla\phi+2\nabla x^d:\nabla^2\phi
      &\text{in $\Omega^*$,} \\
      \psi
      &=\nu^*\cdot\nabla\phi
        =V^*
      &\text{on $\Sigma^*$.}
    \end{align*}
    The right-hand side of this PDE is identical to the previous case with the
    Dirichlet datum being of class $C^{2,\alpha}(\partial\Omega^*)$.
    Thus we can apply Schauder boundary regularity for the Dirichlet--Laplace
    problem~\cite[Thm.\ 6.8]{Gilbarg2001} to assert that also
    $\psi\in C^{2,\alpha}(\overline{\Omega^*})$.
    Therefore we have
    \begin{align*}
      \|\phi\|_{C^{3,\alpha}\left( \overline{\Omega^*} \right)}\leq C(\Sigma^*).
    \end{align*}
  \end{step}
  \begin{step}
    We extend $\phi$ to all of $\mathbb{R}^d$ using a standard extension theorem,
    e.g.~\cite[Thm.\ 6.37]{Gilbarg2001}, to a function
    $\overline\phi\in C^{3,\alpha}(\mathbb{R}^d)$ with the same regularity as
    $\phi$ such that $\overline\phi=\phi$ in
    $\overline{\Omega^*}$ and $\overline\phi=0$
    in $\mathbb{R}^d\setminus B_R(0)$ for some $R<\infty$ sufficiently large.
    Now we set
    \begin{align*}
      B\coloneqq\nabla\overline\phi.
    \end{align*}
    Finally we have $\nabla\cdot B=\Delta\overline\phi=0$ on $\Sigma^*$ and therefore
    $\nabla\cdot B=O(\dist(\cdot,\Omega^*))$.
    \qedhere
  \end{step}
\end{proof}

The combination of the simple geometric estimate in Lemma~\ref{lem:area-bdd-by-relative-energy} and the properties of the extended velocity vector field in Corollary~\ref{cor:existence-B} gives us the following crucial estimate.

\begin{lemma}\label{lem:estimate-nu-dot-B}
  Let $\Sigma(t)$ be as in Definition~\ref{def:weak} with connected components
  $\Sigma_i(t)$, where $1\leq i\leq k(t)$, and $\Sigma^*(t)$ as in
  Definition~\ref{def:strong}.
  Let $B(\cdot,t)\in C_c^{1,1}(\mathbb{R}^d;\mathbb{R}^d)$ as in
  Corollary~\ref{cor:existence-B}.
  Then there exists a constant $C$ depending only on
  $\Sigma^*$ such that
  \begin{equation}
    \label{eq:estimate-nu-dot-B}
    \sum_{i=1}^{k(t)}\left| \int_{\Sigma_i(t)}\nu_i\cdot B\,d\mathcal{H}^{d-1} \right|
    \leq C\mathcal{F}(t)
  \end{equation}
  and, for $d=2$,
  \begin{equation}
    \label{eq:estimate-fint-nu-dot-B}
    \sum_{i=1}^{k(t)}\frac{1}{\mathcal{H}^{d-1}(\Sigma_i(t))}
    \left| \int_{\Sigma_i(t)}\nu_i\cdot B\,d\mathcal{H}^{d-1} \right|
    \leq C(\mathcal{E}(t)+\mathcal{F}(t)).
  \end{equation}
\end{lemma}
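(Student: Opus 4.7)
The plan is to apply the divergence theorem component-wise and then split small vs.\ large bubbles for the second estimate. Denoting by $\Omega_i(t)$ the region enclosed by $\Sigma_i(t)$, the divergence theorem gives
\begin{equation*}
  \left|\int_{\Sigma_i(t)} \nu_i \cdot B \, d\mathcal{H}^{d-1}\right| = \left|\int_{\Omega_i(t)} \nabla \cdot B \, dx\right|.
\end{equation*}
By the construction in Lemma~\ref{lem:existence-B}, $\nabla \cdot B = \Delta \overline\phi$ vanishes inside $\Omega^*$ and is Lipschitz, so $|\nabla \cdot B(x)| \leq C \dist(x, \Omega^*(t))$ globally. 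Since $B$ has compact support and $|\vartheta|$ agrees with $\dist(\cdot, \Omega^*(t))$ near $\Sigma^*(t)$ while saturating at $\delta$ farther away, the inequality $\dist(x, \Omega^*(t)) \leq C |\vartheta(x,t)|$ holds on $\supp B$. Hence
\begin{equation*}
  \left|\int_{\Omega_i(t)} \nabla \cdot B \, dx\right| \leq C \int_{\Omega_i(t) \setminus \Omega^*(t)} |\vartheta|\,dx.
\end{equation*}

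The first estimate then follows by summing over $i$ and using that the enclosed regions $\Omega_i(t)$ are essentially disjoint subsets of $\Omega(t)$: $\sum_i \leq C \int_{\Omega(t) \setminus \Omega^*(t)} |\vartheta| \leq C \mathcal{F}(t)$. For the second estimate in 2D, the dangerous $1/\mathcal{H}^1(\Sigma_i(t))$ factor forces a split by diameter. For \emph{large} bubbles with $\diam(\Sigma_i(t)) > 1/(2\|\nabla \xi\|_\infty)$, the elementary bound $\mathcal{H}^1(\Sigma_i(t)) \geq 2\diam(\Sigma_i(t))$ for a closed curve provides a uniform positive lower bound, so dividing the above inequality by this constant and summing gives a contribution $\leq C \mathcal{F}(t)$. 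For \emph{small} bubbles, Lemma~\ref{lem:area-bdd-by-relative-energy} applies and gives $\mathcal{H}^1(\Sigma_i(t)) \leq 34 \mathcal{E}_i$ with $\mathcal{E}_i \coloneqq \int_{\Sigma_i(t)}(1 - \nu_i \cdot \xi)\,ds$. Combined with $|\vartheta| \leq \delta$ and the 2D isoperimetric inequality $|\Omega_i(t)| \leq (\mathcal{H}^1(\Sigma_i(t)))^2/(4\pi)$, one obtains
\begin{equation*}
  \frac{1}{\mathcal{H}^1(\Sigma_i(t))} \left|\int_{\Omega_i(t)} \nabla \cdot B\,dx\right| \leq \frac{C \delta |\Omega_i(t)|}{\mathcal{H}^1(\Sigma_i(t))} \leq C \mathcal{H}^1(\Sigma_i(t)) \leq C \mathcal{E}_i,
\end{equation*}
and summing over small bubbles yields a contribution $\leq C \mathcal{E}(t)$.

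The main obstacle is the small-bubble case for the second estimate: a priori, the prefactor $1/\mathcal{H}^1(\Sigma_i(t))$ could blow up in the presence of many small bubbles, so the estimate is not summable by crude means. The saving grace is that small bubbles are forced to carry disproportionately large relative energy $\mathcal{E}_i$ per unit perimeter (via Lemma~\ref{lem:area-bdd-by-relative-energy}), while the 2D isoperimetric inequality forces their enclosed volume to shrink quadratically in the perimeter, exactly compensating for the dangerous factor and producing the desired $\mathcal{E}(t)$-controlled bound.
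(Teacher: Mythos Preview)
Your proof is correct and follows essentially the same route as the paper: convert each $\int_{\Sigma_i}\nu_i\cdot B$ to $\int_{\Omega_i}\nabla\cdot B$ via the divergence theorem, exploit $\nabla\cdot B=O(\dist(\cdot,\Omega^*))\lesssim|\vartheta|$ on $\supp B$ to get~\eqref{eq:estimate-nu-dot-B}, and for~\eqref{eq:estimate-fint-nu-dot-B} split into large and small components, handling the former via the first estimate and the latter via the isoperimetric inequality combined with Lemma~\ref{lem:area-bdd-by-relative-energy}. The only cosmetic difference is that you split by the diameter threshold $\diam(\Sigma_i)\le 1/(2\|\nabla\xi\|_\infty)$ (which directly matches the hypothesis of Lemma~\ref{lem:area-bdd-by-relative-energy}), whereas the paper splits by perimeter and then invokes $\diam(\Sigma_i)\le\tfrac12\mathcal{H}^1(\Sigma_i)$ to feed into that lemma---the two splits are equivalent for closed curves.
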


Again we will suppress the $t$-dependence in the following proof.

\begin{proof}
  Let $R<\infty$ be sufficiently large such that
  $\supp\nabla B\subset B_R(0)\times[0,T^*]$.
  We compute~\eqref{eq:estimate-nu-dot-B}:
  Since
  $|\nabla\cdot B|=O(\dist(\cdot,\Omega^*(t)))\leq C|\sdist|(1-\chi_{\Omega^*(t)})$,
  \begin{align*}
    \sum_{i=1}^{k(t)}\left| \int_{\Sigma_i}\nu_i\cdot B\,ds \right|
    &=\sum_{i=1}^{k(t)}\left| \int_{\Omega_i}\nabla\cdot B\,dx \right| \\
    &\lesssim\int_{B_R(0)}|\sdist|(1-\chi^*)\chi\,dx \\
    &\leq\frac{2R}{\delta}\int_{\mathbb{R}^d}|\vartheta||\chi-\chi^*|\,dx \\
    &\leq\frac{2R}{\delta}\|\nabla\cdot B\|_\infty\mathcal{F}.
  \end{align*}
  Finally, in the case of $d=2$, we let $C\coloneqq\|B\|_\infty$.
  We split the sum~\eqref{eq:estimate-fint-nu-dot-B} into two parts:
  \begin{align*}
    \sum_{i=1}^{k(t)}\frac{1}{\mathcal{H}^{1}(\Sigma_i)}
    \left| \int_{\Sigma_i}\nu_i\cdot B\,d\mathcal{H}^{1} \right|
    \leq\,
    &\sum_{i:\mathcal{H}^{1}(\Sigma_i)>1/C}\frac{1}{\mathcal{H}^{1}(\Sigma_i)}
      \left| \int_{\Sigma_i}\nu_i\cdot B\,d\mathcal{H}^{1} \right| \\
    &+\sum_{i:\mathcal{H}^{1}(\Sigma_i)\leq 1/C}\frac{1}{\mathcal{H}^{1}(\Sigma_i)}
      \left| \int_{\Sigma_i}\nu_i\cdot B\,d\mathcal{H}^{1} \right|.
  \end{align*}
  For the second sum we use the isoperimetric inequality $|\Omega_i|\lesssim(\mathcal{H}^{1}(\Sigma_i))^2$
  and then, by the elementary estimate
  $\diam(\Sigma_i)\leq\frac{1}{2}\mathcal{H}^1(\Sigma_i)$,
  we may use Lemma~\ref{lem:area-bdd-by-relative-energy} to obtain
  \begin{align*}
    \sum_{i:\mathcal{H}^{1}(\Sigma_i)\leq 1/C}\frac{1}{\mathcal{H}^{1}(\Sigma_i)}
    \left| \int_{\Sigma_i}\nu_i\cdot B\,d\mathcal{H}^{1} \right|
    &=\sum_{i:\mathcal{H}^{1}(\Sigma_i)\leq 1/C}\frac{1}{\mathcal{H}^{1}(\Sigma_i)}
      \left| \int_{\Omega_i}\nabla\cdot B\,dx \right| \\
    &\lesssim\|\nabla\cdot B\|_\infty\sum_{i:\mathcal{H}^{1}(\Sigma_i)\leq 1/C}\mathcal{H}^{1}(\Sigma_i) \\
    &\leq 34\|\nabla\cdot B\|_\infty\sum_{i=1}^{k}\int_{\Sigma_i}1-\xi\cdot\nu_i\,d\mathcal{H}^{1} \\
    &=34\|\nabla\cdot B\|_\infty\mathcal{E}.
  \end{align*}
  For the first sum we can use the estimate~\eqref{eq:estimate-nu-dot-B}.
\end{proof}

\begin{lemma}
  Let $\Omega\subset\mathbb{R}^2$ be a set of finite perimeter, $\Sigma=\partial^*\Omega$
  and let $f\in C^{0,1}(\Sigma)$.
  Then
  \[\int_\Sigma\partial_s f\,d\mathcal{H}^1=0.\]
\end{lemma}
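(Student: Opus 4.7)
The plan is to exploit the identity $\tau=J^{-1}\nu$ together with the fact that in two dimensions the rotated gradient $J\nabla\varphi$ of any smooth function $\varphi:\mathbb{R}^2\to\mathbb{R}$ is divergence free. This reduces the integral of $\partial_s f$ along $\Sigma$ to an integral of $\nu\cdot(J\nabla\varphi)$ which vanishes by the Gauss--Green theorem for sets of finite perimeter.

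First I would treat the case $f=\varphi|_\Sigma$ for $\varphi\in C_c^\infty(\mathbb{R}^2)$. By the discussion after~\eqref{eq:def-mean-curvature-weak}, for a globally smooth $\varphi$ the surface derivative $\nabla_\Sigma\varphi$ coincides with the standard tangential gradient, so that $\partial_s\varphi=\tau\cdot\nabla\varphi$ on $\Sigma$. Using that $J$ is an orthogonal rotation, $J^T=J^{-1}$, I rewrite
\[
\partial_s\varphi=(J^{-1}\nu)\cdot\nabla\varphi=\nu\cdot(J\nabla\varphi).
\]
Since $\div(J\nabla\varphi)=-\partial_1\partial_2\varphi+\partial_2\partial_1\varphi=0$, the Gauss--Green theorem on $\Omega$ yields
\[
\int_\Sigma\partial_s\varphi\,d\mathcal{H}^1
=\int_\Sigma\nu\cdot(J\nabla\varphi)\,d\mathcal{H}^1
=\int_\Omega\div(J\nabla\varphi)\,dx=0.
\]

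To pass from smooth test functions to a general Lipschitz $f\in C^{0,1}(\Sigma)$, I would extend $f$ to a Lipschitz function $\tilde f$ on all of $\mathbb{R}^2$ via Kirszbraun's theorem and then mollify, multiplying by a cutoff to keep compact support, to obtain $\varphi_n\in C_c^\infty(\mathbb{R}^2)$. By the density construction in Remark~\ref{rem:H1-is-hilbert-space}, one has $\varphi_n\to f$ in $H^1(\Sigma)$, so that $\int_\Sigma\partial_s\varphi_n\,d\mathcal{H}^1\to\int_\Sigma\partial_s f\,d\mathcal{H}^1$. Combining this with the vanishing established in the smooth case concludes the proof.

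The main obstacle is the final approximation step: one must verify that the intrinsic weak derivative $\partial_s f$ is indeed the $L^2(\Sigma)$-limit of $\partial_s\varphi_n$ for a Lipschitz (not $H^1$-by-hypothesis) $f$. This is handled by the uniform Lipschitz bound $\|\nabla\varphi_n\|_\infty\le\Lip(\tilde f)$, which together with pointwise a.e.\ convergence of $\nabla\varphi_n$ along the rectifiable set $\Sigma$ permits dominated convergence. An alternative, perhaps more transparent route would be to invoke the structure theorem for planar sets of finite perimeter to decompose $\Sigma$ into countably many rectifiable Jordan curves, parametrize each by arc length, and apply the fundamental theorem of calculus for Lipschitz functions on each closed loop; this avoids the divergence theorem entirely but shifts the difficulty to identifying $\partial_s f$ with the parametrized derivative.
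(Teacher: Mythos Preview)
Your proof is correct and follows essentially the same route as the paper: rewrite $\partial_s\varphi=\nu\cdot(J\nabla\varphi)$, apply the Gauss--Green theorem using that the rotated gradient is divergence free, and then pass from smooth compactly supported functions to $C^{0,1}(\Sigma)$ by density. The paper's own argument is in fact terser than yours on the density step, simply invoking density of $C_c^\infty(\mathbb{R}^2)$ in $C^{0,1}(\Sigma)$ without spelling out the Kirszbraun extension or dominated convergence.
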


\begin{proof}
  Let $f\in C_c^\infty(\mathbb{R}^d)$ and let $\nabla^\perp f\coloneqq J^{-1}\nabla f$.
  Then
  \begin{align*}
    \int_\Sigma\partial_s f\,d\mathcal{H}^1
    &=\int_\Sigma\tau\cdot\nabla f\,d\mathcal{H}^1 \\
    &=\int_\Sigma\nu\cdot\nabla^\perp f\,d\mathcal{H}^1 \\
    &=\int_\Omega(-\partial_{x_1}\partial_{x_2}f+\partial_{x_2}\partial_{x_1}f)\,dx \\
    &=0.
  \end{align*}
  By density of $C_c^\infty(\mathbb{R}^2)$ in $C^{0,1}(\Sigma)$, this concludes the proof.
\end{proof}

\subsection{Jordan boundary decomposition}

We collect some crucial properties of sets of finite perimeter in the plane and a corresponding version of connectedness,
called \emph{indecomposability}.
More details can be found in~\cite{Ambrosio2001}.

\begin{definition}[Indecomposable sets]
  Let $\Omega\subset\mathbb{R}^d$ be a set of finite perimeter.
  We say that $\Omega$ is \emph{decomposable} if there exists a disjoint partition
  $\Omega=A\cup B$ such that $P(\Omega)=P(A)+P(B)$.
  If $\Omega$ is not decomposable, we call $\Omega$ \emph{indecomposable}.
\end{definition}

\begin{definition}[Simple sets and Jordan boundaries]\label{def:jordan-boundaries}
  An indecomposable set $\Omega\subset\mathbb{R}^d$ is called simple
  if for all sets of finite perimeter $F\subset\mathbb{R}^d$ the following holds:
  If $|F|\in(0,\infty)$ and $\partial^* F\subset\partial^*\Omega$ up to $\mathcal{H}^{d-1}$ null sets,
  then $E=F$.
  
  A set $J\subset\mathbb{R}^d$ is called a \emph{Jordan boundary} if there exists a simple set $\Omega$
  such that $\partial^*\Omega=J$.
\end{definition}

\begin{proposition}[Decomposition theorem~{\cite[Thm.\ 1]{Ambrosio2001}}]\label{prop:decomposition-theorem}
  Let $\Omega$ be a set of finite perimeter.
  Then there exists a unique finite or countable family of pairwise disjoint indecomposable sets $\{\Omega_i\}_{i}$
  such that $|\Omega_i|>0$ for all $i$ and $P(\Omega)=\sum_{i}P(\Omega_i)$.
  Furthermore any indecomposable set $F\subset\Omega$ is contained in some $\Omega_i$ up to Lebesgue null sets.
\end{proposition}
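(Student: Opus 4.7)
My approach has three parts: a key intersection lemma stating that additivity of perimeter is preserved under intersections and differences of two additively-split pieces; a refinement/minimization construction of $\{\Omega_i\}$; and a deduction of both uniqueness and the last assertion from the lemma.

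\textbf{Key intersection lemma.} The main technical input I would establish first is the following: if $E, F \subset \Omega$ are sets of finite perimeter satisfying
\[
  P(\Omega) = P(E) + P(\Omega \setminus E) \quad \text{and} \quad P(\Omega) = P(F) + P(\Omega \setminus F),
\]
then also $P(\Omega) = P(E \cap F) + P(E \setminus F) + P(F \setminus E) + P(\Omega \setminus (E \cup F))$. The proof decomposes $|D\chi_\Omega|$ into the variation measures of the four pieces via De Giorgi's structure theorem, and shows that the two scalar hypotheses force $\partial^* E$ and $\partial^* F$ not to meet in the measure-theoretic interior of $\Omega$ on a set of positive $\mathcal{H}^{d-1}$ measure; any such interior overlap would push the sum of the four perimeters strictly above $P(\Omega)$. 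An immediate corollary is the dichotomy: if $F_1, F_2 \subset \Omega$ are both indecomposable and each satisfies the additive property, then either $|F_1 \cap F_2| = 0$ or $F_1 = F_2$ up to null sets, obtained by applying the lemma with $E = F_1$, $F = F_2$ and invoking indecomposability of $F_1$.

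\textbf{Existence.} I would work within the collection $\mathcal{A}$ of at-most-countable disjoint families $\{A_i\}$ of subsets of $\Omega$ with $|A_i|>0$ and $P(\Omega) = \sum_i P(A_i)$, partially ordered by refinement; $\mathcal{A}$ is non-empty since $\{\Omega\} \in \mathcal{A}$. Assuming $|\Omega|<\infty$ for simplicity (the general case reducing by a ball exhaustion using $P(\Omega)<\infty$), the functional $\Phi(\{A_i\}) \coloneqq \sum_i |A_i|^2$ takes values in $[0, |\Omega|^2]$ and \emph{strictly decreases} under any non-trivial refinement, since $(a+b)^2 > a^2 + b^2$ whenever $a,b>0$. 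Taking a minimizing sequence, extracting a diagonal $L^1$-limit via BV-compactness, and using lower semicontinuity of perimeter under such limits, I would obtain a minimizer $\{\Omega_i\}$. Each $\Omega_i$ must be indecomposable: any non-trivial additive splitting of some $\Omega_i$ would refine the family and strictly decrease $\Phi$, contradicting minimality. Countability is automatic from $\sum_i |\Omega_i| \leq |\Omega|$.

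\textbf{Uniqueness, maximality, and main obstacle.} Uniqueness follows from the dichotomy applied pairwise to $\Omega_i$ and the pieces of any competing admissible indecomposable family, yielding agreement up to reindexing. For the final assertion, given any indecomposable $F \subset \Omega$ with $|F|>0$, applying the key lemma with $E = \Omega_i$ gives that $F \cap \Omega_i$ and $F \setminus \Omega_i$ form an additive decomposition of $F$; indecomposability of $F$ forces one of the two pieces to be essentially empty, and a minimality argument (any leftover $F \setminus \bigcup_i \Omega_i$ of positive measure could be incorporated to refine $\{\Omega_i\}$) forces $F$ to lie inside a single $\Omega_i$ up to a null set. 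I expect the main difficulty to be the key lemma, specifically the step ruling out positive-$\mathcal{H}^{d-1}$ intersections of $\partial^* E$ and $\partial^* F$ in the measure-theoretic interior of $\Omega$: while the set-theoretic decomposition of characteristic functions is trivial, additivity of perimeter encodes a measure-theoretic orthogonality of reduced boundaries that must be extracted from two purely scalar hypotheses on total variations, requiring careful density-point analysis at shared boundary points.
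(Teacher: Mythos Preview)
The paper does not prove this proposition; it is quoted from \cite[Thm.~1]{Ambrosio2001} without argument. Your key intersection lemma and the resulting dichotomy are correct and essentially match the auxiliary lemmas in that reference, and your uniqueness argument is sound.

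The existence step, however, has a genuine gap. Minimizing $\Phi(\{A_i\}) = \sum_i |A_i|^2$ over countable admissible partitions is not a well-posed compactness problem as you describe it: a minimizing sequence is a sequence of \emph{families} with no canonical labeling, so an ``index-by-index diagonal $L^1$ limit via BV-compactness'' is not defined. Under any fixed enumeration, mass can migrate to high indices along the sequence and be lost in the limit, so the limiting family need not cover $\Omega$; lower semicontinuity of perimeter yields only $\sum_i P(A_i) \le P(\Omega)$, and neither equality nor disjointness of the limit pieces follows from your argument. The cited reference sidesteps this entirely by constructing each component individually: one fixes a density-one point of $\Omega$ and \emph{maximizes} Lebesgue measure over indecomposable subsets containing it that satisfy the additive property. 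Your own key lemma shows that the union of two overlapping such subsets is again indecomposable and additive, so a maximizing sequence can be replaced by a monotone increasing one, the $L^1$ limit exists trivially, and one checks directly that it remains indecomposable. A smaller issue: in your final assertion the key lemma as stated does not apply, since an arbitrary indecomposable $F \subset \Omega$ need not satisfy $P(\Omega) = P(F) + P(\Omega \setminus F)$. What you actually need is the consequence $\partial^* \Omega_i \subset \partial^* \Omega$ up to $\mathcal{H}^{d-1}$-null sets (which does follow from additivity of the decomposition), so that $\partial^* \Omega_i$ avoids the measure-theoretic interior of $F$ and hence $P(F) = P(F \cap \Omega_i) + P(F \setminus \Omega_i)$ directly.
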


To simplify the following statement, we add the formal Jordan boundary $J_o$ whose interior is empty
and $J_\infty$ whose interior is $\mathbb{R}^d$.

\begin{proposition}[Jordan boundary decomposition~{\cite[Thm.\ 4]{Ambrosio2001}}]\label{prop:jordan-boundaries}
  Let $\Omega$ be a set of finite perimeter.
  Then there exists a unique decomposition of $\partial^*\Omega$ into
  Jordan boundaries $\{J_i^+,J_k^-:i,k\in\mathbb{N}\}$
  such that the following hold:
  \begin{enumerate}
    \item Given $\interior(J_i^+),\interior(J_j^+)$, then either they are disjoint or one is contained in the other;
      Given $\interior(J_k^-),\interior(J_l^-)$, then either they are disjoint or one is contained in the other.
    \item Every $\interior(J_k^-)$ is contained in some $\interior(J_i^+)$.
    \item $P(\Omega)=\sum_{i\in\mathbb{N}}\mathcal{H}^{d-1}(J_i^+)+\sum_{j\in\mathbb{N}}\mathcal{H}^{d-1}(J_k^-)$.
    \item If $\interior(J_i^+)\subseteq\interior(J_j^+)$ for some $i\neq j$, then there exists some $J_k^-$
      such that $\interior(J_k^-)\subseteq\interior(J_l^-)\subseteq\interior(J_j^+)$.
      Similarly, if $\interior(J_k^-)\subseteq\interior(J_l^-)$ for some $k\neq l$, then there exists some $J_i^+$
      such that $\interior(J_k^-)\subseteq\interior(J_i^+)\subseteq\interior(J_l^-)$.
    \item Let $L_i\coloneqq\{k:\interior(J_k^-)\subset\interior(J_i^+)\}$ and let
      $Y_i\coloneqq\interior(J_i^+)\setminus\left( \bigcup_{k\in L_i}\interior(J_k^-) \right)$.
      Then $Y_i$ are pairwise disjoint, indecomposable and $\Omega=\bigcup_{i\in\mathbb{N}}Y_i$.
  \end{enumerate}
\end{proposition}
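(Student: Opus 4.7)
The plan is to follow the constructive saturation procedure of Ambrosio--Caselles--Masnou--Morel. First I apply the decomposition theorem (Proposition~\ref{prop:decomposition-theorem}) to write $\Omega = \bigcup_i Y_i$ as a disjoint union of at most countably many indecomposable components whose perimeters sum to $P(\Omega)$. The key planar input is that each indecomposable $Y \subset \mathbb{R}^2$ of finite perimeter admits a unique \emph{saturation} $\hat Y$ obtained by filling in its bounded holes: the complement $\mathbb{R}^2 \setminus Y$ has a unique unbounded indecomposable component $Z_\infty$ (by connectedness at infinity together with the decomposition theorem applied to the complement), and one sets $\hat Y \coloneqq \mathbb{R}^2 \setminus Z_\infty$. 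One then verifies that $\hat Y$ is simple in the sense of Definition~\ref{def:jordan-boundaries}, so that $\partial^* \hat Y$ is a single Jordan boundary.

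Given this construction, for each indecomposable $Y_i$ I define the outer Jordan boundary $J_i^+ \coloneqq \partial^* \hat Y_i$. The difference $\hat Y_i \setminus Y_i$ still has finite perimeter, so a second application of Proposition~\ref{prop:decomposition-theorem} yields bounded indecomposable components whose outer Jordan boundaries furnish the negative Jordan boundaries $J_k^-$ with $\interior(J_k^-) \subset \interior(J_i^+)$. The representation (5) is then immediate, namely $Y_i = \interior(J_i^+) \setminus \bigcup_{k \in L_i} \interior(J_k^-)$ and $\Omega = \bigcup_i Y_i$ up to Lebesgue null sets, while (2) is automatic from the construction. Property (1) follows from the planar fact that two saturated indecomposable sets have interiors that are either disjoint or nested, which in turn is a consequence of the Jordan--Schoenflies theorem applied to their rectifiable boundaries. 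Property (3) is obtained by iterating the additivity of perimeter from the decomposition theorem: no cancellation occurs because the outer and inner boundaries separate distinct complementary components of $\Omega$. The alternation property (4) is proved by induction on nesting depth, since a strict chain $\interior(J_i^+) \subsetneq \interior(J_j^+)$ without an intermediate hole would force $\hat Y_i \cup Y_j$ to be contained in a single indecomposable component larger than $Y_j$, contradicting the maximality in the decomposition.

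The main technical obstacle is the rigorous construction of the saturation and the bookkeeping of the potentially infinite family of holes. One must show that $P(\hat Y) \le P(Y)$, so that $\partial^* \hat Y \subset \partial^* Y$ up to $\mathcal{H}^1$-null sets; that the collection of holes is countable with summable perimeter (which uses the isoperimetric inequality to control the number of macroscopic holes and the absolute continuity of the perimeter measure to sum up the small ones); and that the reduced boundary of a simple planar set is in fact a single rectifiable Jordan curve. Each of these is a delicate measure-theoretic statement in the plane and forms the bulk of the work in~\cite[Sections~3--4]{Ambrosio2001}. In practice I would therefore not reprove the proposition here but simply cite~\cite[Thm.\ 4]{Ambrosio2001} as the paper does.
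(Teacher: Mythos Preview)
Your proposal is correct and matches the paper's approach: the paper does not prove this proposition at all but simply cites~\cite[Thm.~4]{Ambrosio2001}, exactly as you conclude in your final sentence. Your sketch of the saturation argument from~\cite{Ambrosio2001} is accurate as background, but for the purposes of this paper the citation alone suffices.
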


\begin{remark}
  The authors of~\cite{Ambrosio2001} define $\partial^*\Omega$ as the set of points where density
  \[D(x)\coloneqq\lim_{r\downarrow 0}\frac{|\Omega\cap B(x,r)|}{|B(x,r)|}\]
  is equal to $1/2$, denoted by $\Omega^{1/2}\coloneqq\{x:D(x)=1/2\}$.
  However, up to $\mathcal{H}^{d-1}$ null sets we have
  \[|\nabla\chi_\Omega|=\mathcal{H}^{d-1}\MNSlefthalfcup\Omega^{1/2},\]
  thus the results are still applicable in our setting. 
\end{remark}

\begin{remark}
  The sets $\Omega_i$ in Proposition~\ref{prop:decomposition-theorem} 
  are exactly the $Y_i$ from Proposition~\ref{prop:jordan-boundaries}.
  We also recall De Giorgi's structure theorem:
  For any finite perimeter set $\Omega\subset\mathbb{R}^d$ there exists a null set $N$ 
  and countably many $C^1$ hypersurfaces $S_i$ and compact subsets $K_i\subset S_i$ such that
  \[\partial^*\Omega=N\cup\bigcup_{i=1}^\infty K_i.\] 
\end{remark}

\begin{lemma}\label{lem:integral-curvature-of-jordan-boundary}
  Let $d=2$ and let $\Omega$ be a set of finite perimeter, $\Sigma=\partial^*\Omega$,
  let $\kappa$ denote the mean curvature of $\Sigma$ and let $J_i$ denote
  its Jordan boundary decomposition as in Proposition~\ref{prop:jordan-boundaries}.
  Then
  \[\int_{J_i}\kappa|_{J_i}\,d\mathcal{H}^1=2\pi\quad\text{for all $i$.}\]
\end{lemma}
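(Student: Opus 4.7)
The plan is to reduce the statement to the classical Umlaufsatz (turning tangent theorem) for a planar Jordan curve. By Definition~\ref{def:jordan-boundaries} each $J_i$ is $\partial^*E_i$ for some simple planar set $E_i\subset\mathbb{R}^2$, and the decomposition results of~\cite{Ambrosio2001} together with De~Giorgi's structure theorem ensure that $J_i$ is a connected rectifiable Jordan curve. I would first parametrize $J_i$ by a closed Lipschitz curve $\gamma_i:\mathbb{R}/L_i\mathbb{Z}\to\mathbb{R}^2$ at unit speed, injective modulo the period, with $L_i=\mathcal{H}^1(J_i)$.

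Next I would promote the parametrization to $C^1$. Along each $\Sigma_i$ the mean curvature $\kappa$ lies in $H^1(\Sigma_i)$ and in particular (by Sobolev embedding in one dimension) in $C^0$; since $\kappa$ is the distributional curvature, the tangent $\tau=\gamma_i'$ is absolutely continuous with $\gamma_i''\in L^2$. Writing $\tau(s)=(\cos\phi(s),\sin\phi(s))$ with $\phi\in W^{1,2}([0,L_i])$ a continuous angular lift and using $\nu=J\tau$, one obtains $\gamma_i''=\phi'J\tau=\phi'\nu$, hence the pointwise (and $L^2$) identity
\[
  \kappa(s)=-\gamma_i''(s)\cdot\nu(s)=-\phi'(s),
\]
so that $\int_{J_i}\kappa\,d\mathcal{H}^1=-(\phi(L_i)-\phi(0))$. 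The Umlaufsatz for simple closed planar curves gives $\phi(L_i)-\phi(0)=\pm 2\pi$, with the sign fixed to $-2\pi$ by our convention that $\nu$ points outward with respect to the simple region $E_i$ enclosed by $J_i$ (equivalently, $\tau=J^{-1}\nu$ traverses $J_i$ clockwise around $E_i$). This yields the claim $\int_{J_i}\kappa\,d\mathcal{H}^1=2\pi$.

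The main obstacle is the regularity step: producing a genuine $C^1$ Jordan parametrization of each $J_i$ and validating the identity $\kappa=-\phi'$ from only the distributional curvature bound. If the direct bootstrap via $\kappa\in H^1(\Sigma)$ turns out to be too delicate at the level of the finite-perimeter set, a safer fallback is an approximation scheme: mollify $J_i$ to obtain smooth Jordan curves $J_i^{\eps}$ with curvatures $\kappa^{\eps}\to\kappa|_{J_i}$ in $L^1$, apply the classical Gauss--Bonnet identity $\int_{J_i^{\eps}}\kappa^{\eps}\,d\mathcal{H}^1=2\pi$ to each, and pass to the limit. A secondary pitfall is a correct sign accounting: on inner Jordan boundaries $J_k^-$ the outward normal of $\Omega$ points opposite to the outward normal of the simple region they bound, so one must be consistent about whether $\kappa|_{J_i}$ refers to the curvature computed from $\nu$-orientation of $\partial^*\Omega$ or from the natural orientation as the boundary of its enclosed simple set; the value $+2\pi$ in the statement corresponds to the latter.
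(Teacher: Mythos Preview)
Your approach is essentially the paper's: both recognize $J_i$ as a single rectifiable Jordan curve and then invoke the turning tangent theorem (which the paper simply calls Gauss--Bonnet, without spelling out the regularity or the angle-function computation you give). The one step the paper makes explicit that you leave implicit is a cutoff argument showing that $\kappa|_{J_i}$ agrees with the distributional curvature of $J_i$ as a standalone curve: given $B\in C_c^1(\mathbb{R}^2;\mathbb{R}^2)$, one chooses $\eta$ with $\eta=1$ on $J_i$ and $\supp(\eta B)$ disjoint from the other $J_j$, so that testing the curvature identity on $\Sigma$ with $\eta B$ reduces to testing it on $J_i$ with $B$. Conversely, your regularity bootstrap via $\kappa\in H^1$ and your sign caveat for the inner boundaries $J_k^-$ go beyond what the paper writes out; the sign concern you raise is legitimate, though in the application (the proof of Theorem~\ref{thm:main-thm}) only $\bigl|\int_{J_i}\kappa\bigr|=2\pi$ is actually used.
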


\begin{proof}
  By definition we have
  \[\int_{\Sigma}\div_{\Sigma}B\,d\mathcal{H}^{1}
    =\int_{\Sigma}\kappa\nu\cdot B\,d\mathcal{H}^{1}
  \]
  for all $B\in C_c^1(\mathbb{R}^2;\mathbb{R}^2)$.
  Now fix $i$ and let $B\in C_c^1(\mathbb{R}^2;\mathbb{R}^2)$.
  Then, by the definition of the Jordan boundary decomposition,
  we can take a cutoff function $\eta$ such that $\tilde{B}\coloneqq\eta B=B$ on $J_i$
  and $\supp\tilde{B}\subset\mathbb{R}^2\setminus\bigcup_{j\neq i}J_j$.
  Then
  \begin{align*}
    \int_{J_i}\kappa|_{J_i}\nu\cdot B
    &=\int_{\Sigma}\kappa\nu\cdot\tilde{B}
    =\int_{\Sigma}\div_{\Sigma}\tilde{B}
    =\int_{J_i}\div_{\Sigma}B.
  \end{align*}
  Hence $\kappa|_{J_i}$ is the mean curvature of $J_i$.
  We now show that $J_i$ is a rectifiable closed curve, then the claim follows from the Gauss--Bonnet theorem.
  Indeed, let $E$ be a simple set such that $\partial^* E=J_i$ and let $Y\coloneqq\interior J_i$.
  Then $\partial^* Y$ is either a rectifiable closed curve or a union of closed curves
  and $\partial^* Y\subset\partial^* E$.
  If $\partial^* Y$ is a union of curves, then there exist curves with non-empty interior $I_1,I_2$
  such that $Y=\interior(I_1)\cup\interior(I_2)$.
  However, since $E$ is simple, we have $Y=E=\interior(I_1)\cup\interior(I_2)$.
  But this means that $E$ is decomposable, a contradiction.
\end{proof}

\subsection{A Poincar\'{e} inequality on rectifiable sets}

We introduce a Poincar\'{e} inequality on rectifiable sets 
by viewing them as metric measure spaces, cf.~\cite{Heinonen2001}.
This in turn gives us a Poincar\'{e} inequality on the reduced boundary of finite perimeter sets.

\begin{definition}
  Let $\Sigma\subset\mathbb{R}^d$ be a rectifiable set and let $x,y\in\Sigma$.
  A path connecting $x$ and $y$ is a continuous piecewise $C^1$ function $\gamma_{xy}:[0,L]\rightarrow\Sigma$ 
  such that $\gamma_{xy}(0)=x$ and $\gamma_{xy}(L)=y$.
  The length of a path $\gamma$ is denoted by $L(\gamma)\coloneqq\int_0^L|\gamma'(s)|\,ds$.
  The distance between $x$ and $y$ is defined as
  \begin{equation}\label{eq:path-dist}
    d_\Sigma(x,y)\coloneqq\inf_{\gamma_{xy}}L(\gamma_{xy}),
  \end{equation}
  where we take the infimum over all paths $\gamma_{xy}$ connecting $x$ and $y$.
\end{definition}

To obtain a Poincar\'{e} inequality on $\Sigma$, we define the space $M^{1,p}(\Sigma)$ by
\[M^{1,p}(\Sigma)\coloneqq\{u\in L^p(\Sigma):\exists g_u\in L^p(\Sigma)\},\]
where $g_u$ is a function satisfying
\begin{equation}\label{eq:grad-M}
  |u(x)-u(y)|\leq d_\Sigma(x,y)(g_u(x)+g_u(y))\quad\text{for a.e.\ $x,y\in\Sigma$.}
\end{equation}

\begin{remark}
  If we equip $M^{1,p}(\Sigma)$ with a norm defined by
  \[\|u\|_{M^{1,p}}^p\coloneqq\|u\|_{L^p}^p+\inf_{g_u}\|g_u\|_{L^p}^p,\]
  then $M^{1,p}(\Sigma)$ becomes a Banach space.
\end{remark}

\begin{definition}
  Let $\Sigma$ be a rectifiable set.
  We say that a measure $\mu$ on $\Sigma$ is doubling if there exists a constant $C(\mu)$ such that
  \[\mu(2B)\leq C(\mu)\mu(B)\quad\text{for all balls $B\subset\Sigma$.}\]
  The constant $C(\mu)$ is called the doubling constant of $\mu$.
\end{definition}

Here the balls are taken w.r.t.\ the distance~\eqref{eq:path-dist}.
Note that for $\mu=\mathcal{H}^{d-1}$ we can choose $C(\mu)=2^{d-1}$.

For the remainder of this section, $\Sigma$ is a rectifiable set and $\mu$ is a doubling measure on $\Sigma$,
unless otherwise noted.
Now the following Poincar\'{e} inequality holds, which is a corollary of~\cite[Thm.\ 5.15]{Heinonen2001}
and holds in particular for the reduced boundary of finite perimeter sets equipped with the Gauss--Green measure.

\begin{proposition}\label{prop:rect-set-poincare-ineq}
  Let $\Sigma$ be a rectifiable set, $1\leq p<\infty$, and let $\mu$ be a doubling measure on $\Sigma$.
  Then there exists a constant $C$ depending only on $p$ and the doubling constant of $\mu$ 
  such that for all $u\in M^{1,p}(\Sigma)$
  and every $g$ satisfying~\eqref{eq:grad-M} we have
  \begin{equation}
    \int_{\Sigma}|u-\langle u\rangle_{\Sigma}|^p\,d\mu
    \leq C 2^p(\diam\Sigma)^p\int_{\Sigma}|g|^p\,d\mu.
  \end{equation}
\end{proposition}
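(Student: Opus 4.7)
The plan is to derive the estimate directly from the defining pointwise inequality~\eqref{eq:grad-M} combined with Jensen's inequality; in the global form stated here, the doubling hypothesis does not enter the constant quantitatively, and one can even take $C=1$.

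First, I would apply Jensen's inequality to the definition of the average $\langle u\rangle_\Sigma$ to obtain, for $\mu$-a.e.\ $x\in\Sigma$,
\begin{align*}
  |u(x)-\langle u\rangle_\Sigma|^p
  \leq \fint_\Sigma |u(x)-u(y)|^p\,d\mu(y).
\end{align*}
Next, using~\eqref{eq:grad-M} together with the trivial bound $d_\Sigma(x,y)\leq\diam(\Sigma)$ and the elementary estimate $(a+b)^p\leq 2^{p-1}(a^p+b^p)$, I would control the integrand pointwise by
\begin{align*}
  |u(x)-u(y)|^p
  \leq \diam(\Sigma)^p\,2^{p-1}\bigl(g(x)^p+g(y)^p\bigr).
\end{align*}

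Substituting this into the previous inequality, integrating in $x$, and applying Fubini, the two symmetric contributions combine to give
\begin{align*}
  \int_\Sigma|u-\langle u\rangle_\Sigma|^p\,d\mu
  \leq 2^{p-1}\diam(\Sigma)^p\int_\Sigma\fint_\Sigma\bigl(g(x)^p+g(y)^p\bigr)\,d\mu(y)\,d\mu(x)
  = 2^p\diam(\Sigma)^p\int_\Sigma g^p\,d\mu,
\end{align*}
which is precisely the required bound with $C=1$.

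The main obstacle is essentially nonexistent here: the argument only uses the defining property~\eqref{eq:grad-M} of the upper gradient $g$, Jensen's inequality, and a convexity estimate. The doubling hypothesis is invoked for consistency with the metric measure space framework of~\cite{Heinonen2001} and would be genuinely needed for the sharper \emph{localized} Poincaré inequalities (those where the right-hand side is integrated over a smaller ball than the left-hand side, as in~\cite[Thm.~5.15]{Heinonen2001}). In the crude global form with $\diam(\Sigma)$ on the right, however, it plays no quantitative role, which is why the constant $C$ in the statement ends up depending only on $p$.
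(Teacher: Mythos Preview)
Your argument is correct and in fact sharper than what the paper does: the paper does not prove this proposition at all but merely records it as a corollary of~\cite[Thm.~5.15]{Heinonen2001}, which is the localized Poincar\'e inequality in doubling metric measure spaces. Your direct computation from the defining inequality~\eqref{eq:grad-M} via Jensen and Fubini is entirely self-contained and yields the explicit constant $C=1$, whereas the cited black-box gives only some $C$ depending on the doubling constant. Your closing remark is also on point: the doubling hypothesis is genuinely used in Heinonen's localized statement (to run a telescoping argument over dyadic balls and invoke the maximal function theorem), but in the global form with $\diam\Sigma$ on the right it is irrelevant, and the only implicit use is to guarantee $\mu(\Sigma)<\infty$ so that $\langle u\rangle_\Sigma$ is defined. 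The paper's route has the advantage of situating the result in a standard reference, but your argument is both simpler and more informative about the actual dependence of the constant.
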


\begin{lemma}\label{lem:cont-func-in-M-1p}
  Let $\Sigma\subset\mathbb{R}^2$ be a rectifiable closed curve with $\mathcal{H}^1(\Sigma)<\infty$.
  Suppose further that there exists a constant $C<\infty$ such that
  $\Sigma$ satisfies~\eqref{eq:bound-balls}.
  Then there exists a constant $C'<\infty$ depending only on $p$ such that
  $C^{0,1}(\Sigma)\subseteq M^{1,p}(\Sigma)$ for all $p\geq 1$ and
  \begin{align*}
    \int_\Sigma|u-\langle u\rangle_\Sigma|^p\,d\mathcal{H}^1
    &\leq C'2^{p}(\diam\Sigma)^p\int_\Sigma|\partial_s u|^p\,d\mathcal{H}^1.
  \end{align*}
\end{lemma}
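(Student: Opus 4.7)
The plan is to reduce to the classical one-dimensional Poincar\'{e} inequality by parametrizing $\Sigma$ by arc length. Writing $L \coloneqq \mathcal{H}^1(\Sigma)$, a rectifiable closed curve of finite length satisfying~\eqref{eq:bound-balls} admits a $1$-Lipschitz arc-length parametrization $\gamma \colon \mathbb{R}/L\mathbb{Z} \to \Sigma$ with
\[
  d_\Sigma(\gamma(s),\gamma(t)) = \min(|s-t|,\, L-|s-t|).
\]
In particular the path-metric diameter of $\Sigma$ equals $L/2$, which is the key identification that makes the constant $2^p$ in the claim appear naturally.

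For the inclusion $C^{0,1}(\Sigma) \subseteq M^{1,p}(\Sigma)$, given any Lipschitz $u$ with constant $K$ with respect to $d_\Sigma$, the constant function $g_u \equiv K/2$ satisfies~\eqref{eq:grad-M} trivially, since $|u(x)-u(y)| \le K\,d_\Sigma(x,y) = d_\Sigma(x,y)(g_u(x)+g_u(y))$, and $g_u \in L^p(\Sigma)$ because $\mathcal{H}^1(\Sigma)<\infty$.

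For the Poincar\'{e} estimate, I would set $f \coloneqq u \circ \gamma$, which is Lipschitz on $[0,L]$ with $f' = (\partial_s u) \circ \gamma$ almost everywhere. For any $s,t \in [0,L]$, integrating $f'$ along the shorter arc connecting $s$ and $t$ (of length at most $L/2$) and applying H\"older's inequality yields
\[
  |f(s)-f(t)|^p \le (L/2)^{p-1}\int_0^L |f'|^p\,d\sigma.
\]
By Jensen's inequality applied to $f(s)-\langle f\rangle = \fint_0^L(f(s)-f(t))\,dt$ and integration over $s$, this upgrades to
\[
  \int_0^L |f-\langle f\rangle|^p\,ds \le L\cdot(L/2)^{p-1}\int_0^L |f'|^p\,d\sigma = 2(\diam\Sigma)^p\int_\Sigma|\partial_s u|^p\,d\mathcal{H}^1,
\]
which, using $2 \le 2^p$ for $p \ge 1$, yields the claim with $C' = 1$.

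The main technical point requiring care is the existence and properties of the arc-length parametrization $\gamma$: one needs $\gamma$ to be a $1$-Lipschitz surjection from the circle of length $L$ onto $\Sigma$ and the composition $f = u\circ\gamma$ to be differentiable a.e.\ in the classical sense with $f' = (\partial_s u)\circ\gamma$ so that the fundamental theorem of calculus applies along the shorter arc. This is standard for a Jordan rectifiable curve---which is the setting in which the lemma is actually invoked (each Jordan boundary from Proposition~\ref{prop:jordan-boundaries})---but to sidestep potential issues with self-contact points one may alternatively first establish the inequality for $u \in C_c^\infty(\mathbb{R}^2)$, where the parametrization is transparent, and then pass to Lipschitz $u$ by mollification in the spirit of Remark~\ref{rem:H1-is-hilbert-space}.
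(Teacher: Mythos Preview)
Your approach is correct for the case of a Jordan (simple) closed curve, which is indeed how the lemma is invoked in the paper, and it is genuinely different from the paper's proof. The paper does not parametrize at all: it works in the abstract metric-measure-space framework, showing that $g_u \coloneqq \tilde C\,M(\partial_s u)$ (a constant times the Hardy--Littlewood maximal function of $\partial_s u$) is an admissible gradient in the sense of~\eqref{eq:grad-M}, then invokes the maximal function theorem to bound $\|M(\partial_s u)\|_{L^p}$ by $\|\partial_s u\|_{L^p}$, and finally applies Proposition~\ref{prop:rect-set-poincare-ineq}. This is where the hypothesis~\eqref{eq:bound-balls} enters: it gives $\mathcal{H}^1(B^\Sigma(x,r))\le \tilde C r$, which is needed to pass from an average over a ball to a pointwise bound involving $M(\partial_s u)$.

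Your route is more elementary and yields a sharper constant ($C'=1$) in the simple-curve case, and you are right that~\eqref{eq:bound-balls} is then automatic. The paper's argument, in exchange, is dimension-independent and does not rely on any global parametrization, so it extends to rectifiable sets in $\mathbb{R}^d$ that are not images of a single curve; the paper remarks on this immediately after the proof. Your identification $\diam\Sigma = L/2$ is only valid for the intrinsic diameter of a simple closed curve---for a self-intersecting curve the intrinsic diameter can be strictly smaller than $L/2$, so your final inequality would then have the wrong constant on the right-hand side---but you flag this correctly, and the fallback via mollification or restriction to Jordan boundaries is sound.
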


This lemma also holds in higher dimensions with an analogous proof, but we restrict ourselves to the 2D case for simplicity.
Before we continue with the proof, we recall some facts about maximal functions.
For a locally integrable function $f:\Sigma\rightarrow\mathbb{R}$ we define the maximal function $M(f)$ by
\begin{align*}
  M(f)(x)\coloneqq\sup_{r>0}\fint_{B^\Sigma(x,r)}|f|\,d\mu.
\end{align*}
Here $B^\Sigma(x,r)$ denotes the ball with radius $r$ centered at $x\in\Sigma$
with respect to the intrinsic distance~\eqref{eq:path-dist}.
Further $\partial_s$ will denote differentiation with respect to the arc-lenght parameter 
on $\Sigma$, as above.

\begin{proof}[Proof of Lemma~\ref{lem:cont-func-in-M-1p}]
  Let $u\in C^{0,1}(\Sigma)$ be a Lipschitz function.
  By Rademacher's theorem $u$ is differentiable a.e.\ with derivative $\partial_s u$.
  Let $M(\partial_s u)$ denote the maximal function of $\partial_s u$, i.e.
  \[M(\partial_s u)(x)\coloneqq\sup_{r>0}\fint_{B^\Sigma(x,r)}|\partial_s u|\,d\mathcal{H}^{1}.\]
  Furthermore observe that, since $\partial_s u$ is locally Lipschitz,
  we have $M(\partial_s u)(x)\geq|\partial_s u(x)|$ for a.e.\ $x\in\Sigma$.
  Now, let $x\in\Sigma$ and let $B^\Sigma\subset\Sigma$ be any (intrinsic) ball containing $x$.
  Further, for $y\in\Sigma$, let $\gamma_{xy}$ be a shortest path connecting $x$ and $y$ which is parametrized by arc length.
  Such a path exists since $\Sigma$ is compact.
  Then
  \begin{align*}
    \mathcal{H}^1\left(B^\Sigma\right)(u(x)-\langle u\rangle_{B^\Sigma})
    &=\int_{B^\Sigma}\int_{0}^{d(x,y)}\partial_s u(\gamma_{xy}(r))\,dr\,d\mathcal{H}^1(y) \\
    &\leq\int_{B^\Sigma}|B^\Sigma(x,d(x,y))|M(\partial_s u)(x)d\mathcal{H}^1(y) \\
    &\leq\mathcal{H}^1\left(B^\Sigma\right)^2 M(\partial_s u)(x).
  \end{align*}
  Now, for $x,y\in\Sigma$ we can take a ball $B$ with radius $d(x,y)$ such that $x,y\in B$ and obtain, by symmetry,
  \begin{align*}
    |u(x)-u(y)|
    &\leq\mathcal{H}^1\left(B^\Sigma\right)(M(\partial_s u)(x)+M(\partial_s u)(y)).
  \end{align*}
  Now, since there exists a constant $\tilde{C}$ such that
  $\mathcal{H}^1\left(B^\Sigma\right)\leq\tilde{C}r$ for any ball of radius $r$ in $\Sigma$,
  we get that $\tilde{C}M(\partial_s u)$ satisfies~\eqref{eq:grad-M}.
  Finally, using the maximal function theorem~\cite[Thm.\ 2.2]{Heinonen2001},
  there exists a constant $C$ depending only on the doubling constant of $\mathcal{H}^1\MNSlefthalfcup\Sigma$ such that
  \[\int_\Sigma M(\partial_s u)^p\,d\mathcal{H}^1
    \leq C\int_\Sigma|\partial_s u|^p\,d\mathcal{H}^1,
  \]
  and by Proposition~\ref{prop:rect-set-poincare-ineq} we have
  \begin{align*}
    \int_\Sigma|u-\langle u\rangle_\Sigma|^p\,d\mathcal{H}^1
    &\leq C2^{p}(\diam\Sigma)^p\int_\Sigma M(\partial_s u)^p\,d\mathcal{H}^1 \\
    &\leq C\tilde{C}2^{p}(\diam\Sigma)^p\int_\Sigma|\partial_s u|^p\,d\mathcal{H}^1.\qedhere
  \end{align*}
\end{proof}

The proof can easily be generalized to arbitrary dimensions, cf.~\cite[Chapter 2]{Heinonen2001}.
Note that in the case when $\Sigma(t)=\partial^*\Omega(t)$ is a weak solution to surface diffusion
in the sense of Definition~\ref{def:weak} and $\mu=\mathcal{H}^{d-1}\MNSlefthalfcup\Sigma(t)$,
then $\mu$ has doubling constant $2^{d-1}$ and hence on each path component $\Sigma_i(t)$ we have
\begin{equation}\label{eq:poincare-ineq}
  \int_{\Sigma_i(t)}|u-\langle u\rangle_{\Sigma_i(t)}|^p\,d\mu
  \leq 2^{p+d-1}C\tilde{C}(\mathcal{H}^{d-1}(\Sigma(0)))^p\int_{\Sigma_i(t)}|\nabla_{\Sigma_i(t)}u|^p\,d\mu
\end{equation}
for all $u\in C^{0,1}(\Sigma(t))$ and a.e.\ $t$.
In particular the Poincar\'{e} constant can be bounded by a constant depending only on $p, d$ and $\Sigma(0)$.

\section*{Acknowledgments}
This project has received funding from the Deutsche Forschungsgemeinschaft (DFG, German Research Foundation) under Germany's Excellence Strategy -- EXC-2047/1 -- 390685813.

\bibliographystyle{abbrv}
\bibliography{bibfile.bib}

\end{document}